\title[Twisted book decompositions and the Goeritz groups]
{Twisted book decompositions and the Goeritz groups}
\author{Daiki Iguchi}
\address{
Department of Mathematics \newline
\indent Hiroshima University, 1-3-1 Kagamiyama, Higashi-Hiroshima, 739-8526, Japan}
\email{m186064@hiroshima-u.ac.jp}
\author{Yuya Koda}
\address{
Department of Mathematics \newline
\indent Hiroshima University, 1-3-1 Kagamiyama, Higashi-Hiroshima, 739-8526, Japan}
\email{ykoda@hiroshima-u.ac.jp}
\theoremstyle{plain}
\newtheorem*{theorem*}{Theorem}
\newtheorem*{lemma*} {Lemma}
\newtheorem*{corollary*} {Corollary}
\newtheorem*{proposition*}{Proposition}
\newtheorem*{conjecture*}{Conjecture}
\newtheorem{theorem}{Theorem}[section]
\newtheorem{lemma}[theorem]{Lemma}
\newtheorem{corollary}[theorem]{Corollary}
\newtheorem{proposition}[theorem]{Proposition}
\theoremstyle{remark}
\newtheorem*{claim*}{Claim}
\newtheorem*{remark}{Remark}
\theoremstyle{definition}
\newtheoremstyle{citing}
  {}
  {}
  {\itshape}
  {}
  {\bfseries}
  {.}
  {.5em}
  {\thmnote{#3}}
\theoremstyle{citing}
\newtheorem*{citingtheorem}{} 
\newcommand{\Real}{\mathbb{R}}
\newcommand{\Rational}{\mathbb{Q}}
\newcommand{\MCG}{\mathrm{MCG}}
\newcommand{\Aut}{\mathrm{Aut}}
\newcommand{\id}{\mathrm{id}}
\newcommand{\Nbd}{\operatorname{Nbd}}
\newcommand{\Cl}{\operatorname{Cl}}
\newcommand{\Int}{\operatorname{Int}}
\newcommand{\Image}{\operatorname{Im}}
\newcommand{\ML}{\mathcal{ML}}
\begin{document}

\maketitle

\begin{abstract}
We consider the Goeritz groups of the Heegaard splittings induced from 
twisted book decompositions. 
We show that there exist Heegaard splittings of distance $2$ that have 
the infinite-order mapping class groups whereas 
that are not induced from open book decompositions. 
Explicit computation of those mapping class groups are given. 
\end{abstract}

\vspace{1em}

\begin{small}
\hspace{2em}  \textbf{2010 Mathematics Subject Classification}: 
57N10; 57M60


\hspace{2em} 
\textbf{Keywords}:
3-manifold, Heegaard splitting, mapping class group, distance. 
\end{small}

\section*{Introduction}

It is well known that every closed orientable $3$-manifold $M$ is the result of taking 
two copies $H_1$, $H_2$ of a handlebody and gluing them along their boundaries.
Such a decomposition $M = H_1 \cup_\Sigma H_2$ is called 
a {\it Heegaard splitting} for $M$. 
The surface $\Sigma$ here is called the {\it Heegaard surface} of the splitting, 
and the genus of $\Sigma$ is called its {\it genus}.
In \cite{Hem01}, Hempel introduced a measure of the complexity of 
a Heegaard splitting called the {\it distance} of the splitting. 
Roughly speaking, this is the distance between the sets of meridian disks of $H_1$ and $H_2$ 
 in the {\it curve graph} $\mathcal{C} (\Sigma)$ 
of the Heegaard surface $\Sigma$. 

The {\it mapping class group}, or the {\it Goeritz group}, of a Heegaard splitting for a $3$-manifold 
is the group of isotopy classes of orientation-preserving automorphisms (self-homeomorphisms) of the manifold 
that preserve each of the two handlebodies of the splitting setwise.
We note that the Goeritz group of a Heegaard splitting is a subgroup of the mapping class group of the Heegaard surface.

Concerning the structure of the Goeritz groups, Minsky asked in \cite{Gor07} 
when the Goeritz group of a Heegaard splitting is finite, finitely generated, or finitely presented, respectively.  
The distance of Heegaard splittings gives a nice way to describe those nature of the Goeritz groups. 
 In \cite{Nam07}, Namazi showed that the Goeritz group 
 is a finite group if a Heegaard splitting has a sufficiently high distance. 
This result was improved by Johnson \cite{Joh10} showing the same consequence 
when the distance of the splitting is at least $4$. 
On the contrary, it is an easy fact that the Goeritz group 
 is always an infinite group when the distance of the Heegaard splitting is at most one 
 (see e.g. Johnson-Rubinstein \cite{JR13} or Namazi \cite{Nam07}).
In this case, there have been many efforts to find finite generating sets or presentations of 
the Goeritz groups. 
For example, the sequence of works  \cite{Goe33, Sch04, Akb08, Cho08, Cho13, CK14, CK15, CK16, CK19} 
by many authors 
completed to give a finite presentation of the Goeritz group of every genus-$2$ Heegaard splitting 
of distance $0$. 
Recently, Freedman-Scharlemann \cite{FS18} gave a finite generating set of the genus-$3$ Heegaard splitting of the $3$-sphere. 
For the higher genus Heegaard splittings of the 3-sphere, 
the problem of existence of finite generating sets of the Goeritz groups 
still remains open. 
For other works on finite generating sets of Goeritz groups, see \cite{Joh11a, Joh11b, CKS16}. 


In this paper, we concern the Goeritz groups of {\it strongly-irreducible} 
(that is, distance at least $2$) Heegaard splittings. 
There are few isolated examples that are known. 
First, we think of a natural question: how can the Goeritz group be ``small" 
fixing the genus and the distance of the splitting.  
In Section~\ref{sec:The Goeritz groups of keen Heegaard splittings}, 
we consider finiteness properties of the Goeritz groups of {\it keen} Heegaard splittings 
(see Proposition~\ref{thm:The Goeritz groups of keen Heegaard splittings}). 
As a direct corollary, we get the following: 
\begin{citingtheorem}[Corollary~\ref{cor:direct corollary of IJK18}]
For any $g \geq 3$ and $n \geq 2$, there exists a genus-$g$ Heegaard splitting 
of distance $n$ whose 
Goeritz group is either a finite cyclic group or a finite dihedral group. 
\end{citingtheorem}

Roughly speaking, it is believed that the ``majority" of 
the Heegaard splittings of distance $2$ or $3$ have 
the Goeritz groups of at most finite orders.  
One typical example of a ``minority" here is constructed by 
using an open book decomposition with a monodromy of infinite order, 
see for instance the preprint Johnson \cite{Joh11d}. 
In fact, this construction gives a distance-$2$ Heegaard splitting 
whose Goeritz group is an infinite groups. 
Since the Heegaard splitting induced from an open book decomposition 
admits the ``accidental" symmetry coming from the rotation around the binding, 
we might wonder wether this type of Heegaard splittings is the only ``minority". 

In the main part of the paper, we focus on 
the Heegaard splittings induced from {\it twisted book decompositions}, 
which are first studied in Johnson-Rubinstein \cite{JR13}. 
Here is a brief construction 
(see Sections~\ref{sec:Handlebodies as interval bundles}--\ref{sec:The Goeritz groups of the Heegaard splittings induced from twisted book decompositions} 
for the detailed definitions). 
Let $F$ be a compact non-orientable surface of negative Euler characteristic 
with a single boundary component,  
let $\pi: H \to F$ be the orientable $I$-bundle with the {\it binding} $b \subset \partial H =: \Sigma$. 
Let $M = H_1 \cup_\Sigma H_2$ be the Heegaard splitting obtained 
by gluing $H$ to a copy of itself via an automorphism $\varphi$ of $\Sigma$ that preserves $b$. 
It is easy to see that the distance of such a Heeegaard splitting is at most $4$. 
We compute the Goeritz group of $M = H_1 \cup_\Sigma H_2$ in the following two cases. 

The first case is that the gluing map $\varphi$ is particularly ``simple". 
\begin{citingtheorem}[Theorem~\ref{thm:the Goeritz groups of distance-2 Heegaard splittings}]
Suppose that the gluing map $\varphi$ is 
a $k$-th power of the Dehn twist about the binding $b$, where $|k| \geq 5$. 
For the Heegaard splitting $M = H_1 \cup_\Sigma H_2$ as above, we have the following.
\begin{enumerate}
\item
The splitting $M = H_1 \cup_\Sigma H_2$ is not induced from an open book decomposition. 
\item
The Goeritz group of $M = H_1 \cup_\Sigma H_2$ is isomorphic to the mapping class group of $F$. 
In particular, it is an infinite group. 
\end{enumerate} 
\end{citingtheorem}
Note that it follows directly from Yoshizawa \cite{Yos14} 
that the distance of the splitting $M = H_1 \cup_\Sigma H_2$ in the above theorem 
is exactly $2$. 
Theorem~\ref{thm:the Goeritz groups of distance-2 Heegaard splittings} 
indicates that the ``minorities" is not as minor as we wondered in the previous paragraph. 
Further, it is remarkable that the above theorem 
gives the first explicit computation of the infinite-order Goeritz groups of strongly-irreducible Heegaard splittings. 

The second case is, on the contrary, that the gluing map $\varphi$ is complicated in the sense that  
the distance in the curve graph 
$\mathcal{C} (\Sigma_b) $ between the images of the {\it subsurface projection} $\pi_{\Sigma_b}$ of 
the sets of meridian disks $\mathcal{D} (H_1)$ of $H_1$ and $\mathcal{D} (H_2)$ of $H_2$ is sufficiently large, 
where $\Sigma_b := \Cl (\Sigma - \Nbd (b))$. 
In this case, we can show that the distance of the splitting $M = H_1 \cup_\Sigma H_2$ is exactly $4$ and 
we can compute the Goeritz group as follows, where 
the definition of the group $G (S, \iota_0, \iota_1)$ is given in Section~\ref{subsec:The Goeritz groups of distance-4 Heegaard splittings}:  

\begin{citingtheorem}[Theorem~\ref{thm:the Goeritz groups of distance-4 Heegaard splittings}]
Suppose that the distance in $\mathcal{C}(\Sigma_b)$ between 
$\pi_{\Sigma_b} (\mathcal{D} (H_1))$ and $\pi_{\Sigma_b} (\mathcal{D} (H_2))$ is greater than $10$. 
For the Heegaard splitting $M = H_1 \cup_\Sigma H_2$ as above, we have the following.
\begin{enumerate}
\item
The distance of the splitting $M = H_1 \cup_\Sigma H_2$ is exactly $4$. 
\item
The Goeritz group of $M = H_1 \cup_\Sigma H_2$ is isomorphic to the group $G (S, \iota_0, \iota_1)$. 
\end{enumerate} 
\end{citingtheorem}
We will see that there actually exist (generiacally, in some sense) the Heegaard splittings satisfying the condition in Theorem~\ref{thm:the Goeritz groups of distance-4 Heegaard splittings}. 
The Goeritz group in 
Theorem~\ref{thm:the Goeritz groups of distance-4 Heegaard splittings} 
is of course a finite group since the distance of the Heegaard splitting is $4$. 
The existence of a Heegaard splitting of distance $3$ having the infinite-order Goeritz group  still remains open.  

\vspace{1em}

Throughout the paper, 
any curves on a surface, or surfaces in a 3-manifold are always assumed to be properly embedded, 
and their intersection is transverse and minimal up to isotopy. 
For convenience, we usually will not distinguish 
curves, surfaces, maps, etc.  
from their isotopy classes in their notation. 
Let $Y$ be a subspace of a space $X$. 
In this paper, $\Nbd(Y; X)$, or simply $\Nbd(Y)$, will denote a regular neighborhood of $Y$ in $X$, 
$\Cl(Y)$ the closure of $Y$,  and 
$\Int (Y)$ the interior of $Y$ in $X$. 
The number of components of $Y$ is denoted by $\# Y$.  

\section{Preliminaries}
\label{sec:Preliminaries}

\subsection{Curve graphs}
\label{subsec:Curve complexes}

Let $\Sigma$ be a compact surface. 
A simple closed curve on $\Sigma$ is said to be {\it essential} 
if it is not homotopic to a point or a loop around a boundary component of $\Sigma$. 
An arc on $\Sigma$ is said to be {\it essential} 
if it is not homotopic (rel. endpoints) to a subarc of a boundary component of $\Sigma$.

Let $\Sigma$ be a compact orientable surface of genus $g$ with $p$ boundary components. 
We say that $\Sigma$ is {\it sporadic} if $3g + p \leq 4$. 
Otherwise, $\Sigma$ is said to be {\it non-sporadic}. 
Suppose that $\Sigma$ is non-sporadic. 
The {\it curve graph} $\mathcal{C}(\Sigma)$ of $\Sigma$
is the $1$-dimensional simplicial complex whose 
vertices are the isotopy classes of essential simple closed curves on $\Sigma$ 
such that a  pair of distinct vertices spans  an edge 
if and only if they admit disjoint representatives.
Similarly, 
the {\it arc and curve  graph} $\mathcal{AC}(\Sigma)$ of $\Sigma$
is defined to be the  $1$-dimensional simplicial complex 
whose vertices are the isotopy classes of essential 
arcs and simple closed curves on $\Sigma$ 
such that a pair of distinct vertices spans an edge
if  and only if they admit disjoint representatives. 
The sets of vertices of $\mathcal{C}(\Sigma)$ and $\mathcal{AC}(\Sigma)$ 
are denoted by $\mathcal{C}^{(0)}(\Sigma)$ and $\mathcal{AC}^{(0)}(\Sigma)$, respectively. 
We equip the curve graph $\mathcal{C} (\Sigma)$ 
(resp. the arc and curve graph $\mathcal{AC} (\Sigma)$) 
with the simplicial distance $d_{\mathcal{C} (\Sigma)}$ 
(resp. $d_{ \mathcal{AC}} (\Sigma)$). 
Note that both $\mathcal{C} (\Sigma)$ and 
$\mathcal{AC} (\Sigma)$ are geodesic metric spaces. 

Let $Y$ be an {\it essential} (i.e., $\pi_1$-injective), non-sporadic subsurface of $\Sigma$. 
The {\it subsurface projection} $\pi_Y : \mathcal{C}^{(0)}(\Sigma) \to P( \mathcal{C}^{(0)}(Y))$, 
where $P( \cdot )$ denotes the power set, is defined as follows. 
First, define $\kappa_Y : \mathcal{C}^{(0)}(\Sigma) \to P( \mathcal{AC}^{(0)}(Y))$ to be the map that takes 
$\alpha \in \mathcal{C}^{(0)}(S)$ to $\alpha \cap Y$. 
Further, define the map $\sigma_Y : \mathcal{AC}^{(0)}(Y) \to P( \mathcal{C}^{(0)}(Y) )$ by 
taking $\alpha \in \mathcal{AC}^{(0)}(Y)$ to the set of simple closed curves 
on $Y$ consisting of the components of the boundary of $\Nbd (\alpha \cup \partial Y; Y)$ that are essential in $Y$. 
The map $\sigma_Y$ naturally extends to 
a map $\sigma_Y : P( \mathcal{AC}^{(0)}(Y)) \to P( \mathcal{C}^{(0)}(Y) )$. 
The subsurface projection $\pi_Y : \mathcal{C}^{(0)}(\Sigma) \to P( \mathcal{C}^{(0)}(Y))$ is then defined 
by $\pi_Y = \sigma_Y \circ \kappa_Y$. 
See for example Masur-Minsky \cite{MM00} and Masur-Schleimer \cite{MS13} for details. 
The following lemma is straightforward from the definition. 
\begin{lemma}
\label{lem:subsurface projection of a geodesic segment}
Let $( \alpha_0, \ldots, \alpha_n )$ be a geodesic segment in $\mathcal{C}(\Sigma)$. 
If $\alpha_j \cap Y \neq \emptyset$ for each $j \in \{ 0 , \ldots , n \}$, then it holds   
$d_{\mathcal{C}(Y)} (\pi_Y (\alpha_0), \pi_Y (\alpha_n)) \leq 2n$. 
\end{lemma}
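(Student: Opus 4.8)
The plan is to split the geodesic into its $n$ edges and to estimate the contribution of each edge by a universal constant. Since $(\alpha_0, \ldots, \alpha_n)$ is a geodesic in $\mathcal{C}(\Sigma)$, each consecutive pair $\alpha_j, \alpha_{j+1}$ spans an edge and hence can be realized disjointly on $\Sigma$. In view of the triangle inequality in $\mathcal{C}(Y)$, it is therefore enough to prove the following claim: if $\alpha$ and $\beta$ are essential simple closed curves on $\Sigma$ that can be realized disjointly and satisfy $\alpha \cap Y \neq \emptyset \neq \beta \cap Y$, then $d_{\mathcal{C}(Y)}(\pi_Y(\alpha), \pi_Y(\beta)) \leq 2$. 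Summing this bound over the $n$ edges then gives $d_{\mathcal{C}(Y)}(\pi_Y(\alpha_0), \pi_Y(\alpha_n)) \leq 2n$, as desired.

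To prove the claim I would argue with regular neighborhoods. Put $\alpha$ and $\beta$ in minimal position with respect to $\partial Y$ and to each other, so that $(\alpha \cup \beta) \cap Y$ is a disjoint union of essential arcs and essential simple closed curves in $Y$. Fix a collar $C$ of $\partial Y$ in $Y$ and set $N := \Nbd((\alpha \cup \beta) \cap Y ; Y) \cup C$; this is a regular neighborhood in $Y$ of the $1$-complex $((\alpha \cup \beta) \cap Y) \cup \partial Y$. By definition $\kappa_Y(\alpha) = \alpha \cap Y$ and $\kappa_Y(\beta) = \beta \cap Y$ lie in $N$, so every curve of $\pi_Y(\alpha) = \sigma_Y(\kappa_Y(\alpha))$ and of $\pi_Y(\beta) = \sigma_Y(\kappa_Y(\beta))$ is isotopic in $Y$ into $\Int N$. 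Suppose $\partial N$ has a component $e$ that is essential in $Y$. Then $e$ can be isotoped off $\Int N$, hence made disjoint from representatives of every curve of $\pi_Y(\alpha)$ and of $\pi_Y(\beta)$; this yields $d_{\mathcal{C}(Y)}(\pi_Y(\alpha), e) \leq 1$ and $d_{\mathcal{C}(Y)}(\pi_Y(\beta), e) \leq 1$, and the claim follows.

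The point that needs care --- and the main, if modest, obstacle --- is the existence of an essential component $e$ of $\partial N$, together with the bookkeeping that distinguishes arc components from simple-closed-curve components of $\alpha \cap Y$ and $\beta \cap Y$. If every component of $\partial N$ were inessential in $Y$, then $N$ would be isotopic to $Y$; that is, $((\alpha \cup \beta) \cap Y) \cup \partial Y$ would fill $Y$. I would dispose of this remaining case directly: since $\alpha$ meets $Y$ essentially, $\pi_Y(\alpha) \neq \emptyset$, so there is an essential curve $c \subset \partial \Nbd((\alpha \cap Y) \cup \partial Y; Y)$, and symmetrically an essential curve $c' \subset \partial \Nbd((\beta \cap Y) \cup \partial Y; Y)$; since $\alpha \cap Y$ and $\beta \cap Y$ are disjoint and may be taken to meet a common collar of $\partial Y$, these two regular neighborhoods can be arranged to overlap only in that collar, and an essential boundary curve of their union is then disjoint from both $c$ and $c'$, so that again $d_{\mathcal{C}(Y)}(\pi_Y(\alpha), \pi_Y(\beta)) \leq 2$. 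Alternatively, the claim is exactly the well-known fact that the subsurface projection is coarsely Lipschitz with respect to disjointness, for which one may cite Masur--Minsky \cite{MM00}. With the claim established, the lemma follows from the triangle inequality as explained above.
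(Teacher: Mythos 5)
The paper offers no proof of this lemma (it is flagged as ``straightforward from the definition''), so there is nothing of record to compare against line by line. Your high-level plan is certainly the intended one and the only natural one: bound the contribution of each edge of the geodesic by $2$, then apply the triangle inequality. The per-edge bound you isolate --- for disjoint $\alpha,\beta$ both meeting $Y$, $d_{\mathcal{C}(Y)}(\pi_Y(\alpha),\pi_Y(\beta))\le 2$ --- is the right claim, and citing the Masur--Minsky-type fact, as you propose as an alternative, is a perfectly acceptable way to discharge it.

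The direct argument you give for the per-edge claim, however, has a genuine gap in the filling case. You set $N:=\Nbd((\alpha\cup\beta)\cap Y;Y)\cup C$, using \emph{all} the components of $(\alpha\cup\beta)\cap Y$. When $((\alpha\cup\beta)\cap Y)\cup\partial Y$ fills $Y$ --- and this really can happen once $\alpha\cap Y$ and $\beta\cap Y$ contribute three or more arcs in total --- your fallback invokes ``an essential boundary curve of their union''. But the union of the two regular neighborhoods $\Nbd((\alpha\cap Y)\cup\partial Y)$ and $\Nbd((\beta\cap Y)\cup\partial Y)$ \emph{is} $N$ up to isotopy, and in the case under consideration $\partial N$ has no essential component by hypothesis. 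So the curve you want does not exist, and the argument circles back on itself.

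The repair is to work with a single component $a$ of $\alpha\cap Y$ and a single component $a'$ of $\beta\cap Y$ rather than with the whole arc systems. Then $N':=\Nbd(a\cup a'\cup\partial Y;Y)$ satisfies $\chi(N')\ge -2\ge\chi(Y)$, and one can check that a regular neighborhood of a $1$-complex containing $\partial Y$ cannot fill a non-sporadic $Y$: no component of $Y-N'$ can be a $\partial$-parallel annulus (since $\partial Y$ already lies inside $N'$), and the Euler characteristic leaves no room for disk components, so some essential boundary curve $e\subset\partial N'$ survives. That $e$ is disjoint from $\sigma_Y(a)\subset\pi_Y(\alpha)$ and from $\sigma_Y(a')\subset\pi_Y(\beta)$, giving the per-edge bound of $2$ and hence $2n$ overall. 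With this modification (or, more simply, by leaning on the citation you already offer), your proof is complete.
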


\subsection{Distance of a Heegaard splitting}
\label{subsec:Distance of a Heegaard splitting} 


Let $H$ be a handlebody of genus at lest $2$. 
We denote by $\mathcal{D}(H)$ the subset of $\mathcal{C}^{(0)}(\partial H)$ consisting of 
simple closed curves that bound disks in $H$. 
Given a Heegaard splitting $M = H_1 \cup_{\Sigma} H_2$, the {\it distance} $d (H_1, H_2)$ of 
the  splitting is defined by $d (H_1, H_2) = 
d_{ \mathcal{C}(\Sigma)} (\mathcal{D}(H_1), \mathcal{D}(H_2))$. 
We say that a Heegaard splitting $M = H_1 \cup_{\Sigma} H_2$ is {\it strongly irreducible} 
if $d (H_1, H_2) \geq 2$. 

A Heegaard splitting $M = H_1 \cup_{\Sigma} H_2$ 
is said to be {\it keen} if there exists a unique pair of 
$\alpha \in \mathcal{D}(H_1)$ and $\alpha' \in \mathcal{D}(H_2)$ satisfying 
$d_{\mathcal{C}(\Sigma)} ( \alpha , \alpha' ) = d(H_1, H_2)$. 
In particular, $M = H_1 \cup_{\Sigma} H_2$ is said to be {\it strongly keen} if 
there exists a unique geodesic segment $(\alpha = \alpha_0, \alpha_1, \ldots , \alpha_{n-1}, \alpha_n = \alpha')$, where $n = d (H_1, H_2)$, 
such that $\alpha \in \mathcal{D}(H_1)$ and $\alpha' \in \mathcal{D} (H_2)$. 
We say that a Heegaard splitting $M = H_1 \cup_{\Sigma} H_2$ is {\it weakly keen} 
if there exist only finitely many pairs of 
$\alpha \in \mathcal{D}(H_1)$ and $\alpha' \in \mathcal{D}(H_2)$ satisfying 
$d_{\mathcal{C}(\Sigma)} ( \alpha , \alpha' ) = d(H_1, H_2)$. 
The notion of a keen (and a strongly keen) Heegaard splitting was first introduced by 
Ido-Jang-Kobayashi \cite{IJK18}, 
who showed the following theorem.  
\begin{theorem}[Ido-Jang-Kobayashi \cite{IJK18}]
\label{thm:IJK18}
For any $g \geq 3$ and $n \geq 2$, there exists a genus-$g$ strongly keen Heegaard splitting 
$M = H_1 \cup_{\Sigma} H_2$ with $d (H_1, H_2) = n$. 
\end{theorem}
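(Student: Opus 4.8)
The plan is to recover the construction of Ido--Jang--Kobayashi \cite{IJK18} at the level of a single curve graph, and then upgrade it to a genuine Heegaard splitting. Here Theorem~\ref{thm:IJK18} is stated as something to be established, so let me describe how I would prove it rather than quoting it. First I would fix a closed orientable surface $\Sigma$ of genus $g \geq 3$, which is non-sporadic, and work inside $\mathcal{C}(\Sigma)$. The key input is that $\mathcal{C}(\Sigma)$ has infinite diameter (Masur--Minsky) and, more importantly, admits pseudo-Anosov isometries acting with north--south dynamics on its Gromov boundary; one also has, from \cite{MM00}, the bounded geodesic image theorem and the fact that geodesics in $\mathcal{C}(\Sigma)$ between two fixed vertices, while not unique, have uniformly bounded ``width'' (the set of vertices on some geodesic between $\alpha$ and $\alpha'$ lies in a bounded neighborhood of any one such geodesic).

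The main step is to build a pair $(\alpha, \alpha')$ and a path between them that is forced to be the unique geodesic, and such that $\alpha, \alpha'$ are the unique closest pair of disk sets. I would start from a handlebody $H$ of genus $g$ together with a fixed meridian curve $\alpha_0 \in \mathcal{D}(H)$, and similarly a meridian $\alpha_n$ in a second copy. The idea is to choose a path $(\alpha_0, \alpha_1, \dots, \alpha_n)$ in $\mathcal{C}(\Sigma)$ realizing the desired length $n$, built so that consecutive subsurface projections to the annular neighborhoods of the $\alpha_i$ are large; this makes the path a geodesic by Masur--Minsky, and a \emph{unique} geodesic by arranging the projections to be so large that no alternative vertex can be inserted (any competing geodesic would have to project to the annuli $\mathcal{A}(\alpha_i)$ within distance governed by the bounded geodesic image theorem, contradicting the large twisting). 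Then I would realize the gluing: by the classical fact (Casson--Gordon type argument, or a direct handle-slide construction) one can find a genus-$g$ handlebody pair whose meridian sets $\mathcal{D}(H_1), \mathcal{D}(H_2)$ are ``spun'' away from $\alpha_0, \alpha_n$ along high powers of the twists about the $\alpha_i$ and about auxiliary curves, so that every element of $\mathcal{D}(H_1)$ other than $\alpha_0$ has large projection to some $\mathcal{A}(\alpha_i)$, and symmetrically for $\mathcal{D}(H_2)$. Combined with the bounded geodesic image theorem this forces any $\beta \in \mathcal{D}(H_1)$, $\beta' \in \mathcal{D}(H_2)$ with $d_{\mathcal{C}(\Sigma)}(\beta,\beta') = n$ to have $\beta = \alpha_0$ and $\beta' = \alpha_n$, and the strong keenness (uniqueness of the geodesic segment) follows from the uniqueness already arranged for $(\alpha_0,\dots,\alpha_n)$.

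Concretely I would organize the argument as follows: (1) show $d(H_1,H_2) = n$, i.e.\ that no shorter path exists, using that $\alpha_0$ and $\alpha_n$ fill a subsurface / have enough projections to force distance $\geq n$, and exhibiting the length-$n$ path for the upper bound; (2) prove that the path $(\alpha_0, \dots, \alpha_n)$ is the \emph{unique} geodesic between its endpoints, via a ``large projections kill alternatives'' lemma built on Lemma~\ref{lem:subsurface projection of a geodesic segment} and bounded geodesic image; (3) prove that $(\alpha_0, \alpha_n)$ is the unique realizing pair of meridians, by showing any other meridian of $H_i$ sits at distance $> n$ from every meridian of $H_{3-i}$ because its projection to one of the $\mathcal{A}(\alpha_j)$ is too large to lie on a length-$n$ geodesic; and (4) assemble (2) and (3) to conclude strong keenness. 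The genus restriction $g \geq 3$ enters because one needs enough room in the handlebody to ``hide'' the twisting while keeping $\alpha_0$ a meridian --- in genus $2$ the disk complex is too rigid and the sporadic-type phenomena obstruct the construction; the lower bound $n \geq 2$ is because $n \leq 1$ splittings are never keen (their Goeritz groups, hence the realizing sets, are infinite).

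The hard part will be step (3): controlling \emph{all} of $\mathcal{D}(H_1)$ and $\mathcal{D}(H_2)$, not just the chosen meridians. A priori a handlebody has infinitely many meridians, and one must rule out that some far-away meridian happens to come back close to a meridian on the other side. The mechanism is that the disk set $\mathcal{D}(H_i)$ is quasiconvex in $\mathcal{C}(\Sigma)$ (Masur--Minsky) with a controlled ``footprint'' of subsurface projections determined by the gluing data; making the defining twist powers large enough forces every non-$\alpha_0$ disk of $H_1$ to record that largeness in some annular projection, and bounded geodesic image then pushes it off all length-$n$ geodesics from $\mathcal{D}(H_2)$. Turning this heuristic into uniform inequalities --- choosing the twist exponents as an explicit function of the bounded geodesic image constant and of $g$ and $n$ --- is the technical core, and is where most of the (here omitted) computation lives.
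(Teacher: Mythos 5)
The paper does not prove Theorem~\ref{thm:IJK18} at all: it is stated as a citation of Ido--Jang--Kobayashi \cite{IJK18} and used as a black box (its only role in the paper is to feed Proposition~\ref{thm:The Goeritz groups of keen Heegaard splittings} and produce Corollary~\ref{cor:direct corollary of IJK18}). So there is no ``paper's own proof'' to compare your attempt against; you have set out to reconstruct the content of a reference rather than an argument appearing in this paper.

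Taking your sketch on its own terms, the overall strategy --- build the pair $(\alpha_0,\alpha_n)$ and a candidate geodesic first, then force uniqueness via large annular subsurface projections and the bounded geodesic image theorem, then spin $\mathcal{D}(H_1)$ and $\mathcal{D}(H_2)$ so that every other meridian is pushed far away --- is consistent with the flavor of \cite{IJK18}, but two steps are left at the level of heuristics where the real work lies. First, uniqueness of a geodesic in $\mathcal{C}(\Sigma)$ is a genuinely strong condition: $\mathcal{C}(\Sigma)$ is $\delta$-hyperbolic but locally infinite, and $\delta$-hyperbolicity gives only fellow-traveling of geodesics, not uniqueness; ``large projections to $\mathcal{A}(\alpha_i)$'' by itself only constrains geodesics to pass near the $\alpha_i$, and you do not explain why no other vertex adjacent to both $\alpha_{i-1}$ and $\alpha_{i+1}$ can be substituted for $\alpha_i$. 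This is exactly where a precise quantitative lemma (not just Lemma~\ref{lem:subsurface projection of a geodesic segment} plus bounded geodesic image) is needed. Second, step~(3) asserts that after spinning, every disk in $\mathcal{D}(H_1)$ other than $\alpha_0$ has large projection to some $\mathcal{A}(\alpha_j)$; but the disk set is infinite and not generated by twisting about the $\alpha_j$, so this is a substantial claim about the structure of $\mathcal{D}(H_1)$ rather than an automatic consequence of the construction. Without a concrete mechanism (e.g., an analogue of Masur--Minsky quasiconvexity combined with a nesting argument for Dehn-twist images of disk sets), the ``every other meridian is far'' conclusion does not follow. Since the present paper simply cites \cite{IJK18} for this theorem, the right move here is to do the same, or to carry out these two steps with actual inequalities rather than gesturing at them.
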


\subsection{Mapping class groups}
\label{subsec:Mapping class groups}

Let $Y_1, \ldots, Y_n$ be possibly empty subspaces 
of a compact manifold $X$. 
We denote by $\Aut (X, Y_1, \ldots, Y_n)$ 
the group of automorphisms of $X$ which {map} $Y_i$ onto 
$Y_i$ for any $i=1 , \ldots , n$. 
The {\it mapping class group} of $(X , Y_1 , \ldots, Y_n )$, denoted by 
$\MCG (X , Y_1 , \ldots, Y_n )$, 
is defined to be the group of connected components  of $\Aut (X , Y_1 , \ldots, Y_n)$. 
The equivalence class in $\MCG (X , Y_1 , \ldots, Y_n )$ of a map in 
$\Aut (X , Y_1 , \ldots, Y_n )$ is called its {\it mapping class}. 
As mentioned in the introduction, we usually will not distinguish 
a map and its mapping class. 
This should not cause any confusion since it will usually be clear from the context 
in which equivalence relation we consider for the maps in question. 
When $X$ is orientable, 
the ``plus" subscripts, for instance in 
$\Aut_+ (X, Y_1, \ldots, Y_n )$ and 
$\MCG_+ (X, Y_1, \ldots, Y_n)$, 
indicate the subgroups of 
$\Aut (X, Y_1,  \ldots, Y_n  )$ and 
$\MCG (X, Y_1, \ldots, Y_n )$, 
respectively, consisting of 
orientation-preserving automorphisms 
(or their mapping classes) of $X$. 	 

Let $M = H_1 \cup_{\Sigma} H_2$ be a Heegaard splitting. 
The group $\MCG_+ (M, H_1)$ is called 
the {\it mapping class group}, or the {\it Goeritz group}, of the splitting. 
Note that the natural map  
$\MCG_+ (M, H_1) \to \MCG_+ (\Sigma)$ that 
takes (the mapping class of) $\varphi \in \MCG_+ (M, H_1)$ to (that of) 
$\varphi|_{\Sigma} \in \MCG_+(\Sigma)$ 
is injective. 
In this way $\MCG_+ (M, H_1)$  can be naturally regarded as a subgroup of $\MCG_+ (\Sigma)$. 
In general, an automorphism $\psi$ of a submanifold $Y$ of a manifold $X$ is 
said to be {\it extendable over $X$} if  
$\psi$ extends to an automorphism of the pair $(X, Y)$. 
We can say that the Goeritz group for the splitting $M = H_1 \cup_\Sigma H_2$ 
is the subgroup of the mapping class group 
$\MCG_+(\Sigma)$ of the Heegaard surface $\Sigma$ consisting of 
elements that are extendable over $M$. 


In this paper, the mapping class groups of non-orientable surfaces 
will also be particularly important. 
Let $F$ be a compact non-orientable surface with non-empty boundary. 
Let $p : \Sigma \to F$ be the orientation double-cover. 
Since the set of two-sided loops are preserved by any automorphism of $F$, 
any map $\varphi \in \Aut (F)$ lifts to 
a unique orientation-preserving automorphism of $\Sigma$. 
(The other lift of $\varphi$ is orientation-reversing.)  
This gives a well-defined homomorphism $L : \MCG (F) \to \MCG_+ (\Sigma)$. 
We use the following easy but important lemma in 
Section~\ref{subsec:The Goeritz groups of distance-2 Heegaard splittings}. 

\begin{lemma}
\label{lem:injectivity of the map from MCG(F) to MCG+(Sigma)}
The above map $L : \MCG (F) \to \MCG_+ (\Sigma)$ is injective. 
\end{lemma}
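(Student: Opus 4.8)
The plan is to show that the kernel of $L : \MCG(F) \to \MCG_+(\Sigma)$ is trivial by exhibiting, for any nontrivial $\varphi \in \MCG(F)$, an essential simple closed curve (or an essential arc, passing to the closed surface via capping) on $F$ that is not preserved by $\varphi$, and then arguing that its preimage under $p$ is not preserved by the lift $L(\varphi)$. The main tool is the classical fact that the natural map from the mapping class group of a surface with boundary to the one obtained by capping boundary components with disks (or to the corresponding quotient) is understood well enough that faithfulness of a mapping class on the level of isotopy classes of curves can be detected; more directly, I would use the Alexander method: a mapping class of $F$ is determined by its action on a suitable finite filling collection of essential arcs and curves, and is trivial if and only if it fixes every isotopy class in this collection.

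First I would set up the covering correspondence. For a two-sided essential simple closed curve $c$ on $F$, the preimage $p^{-1}(c)$ is a disjoint union of two simple closed curves on $\Sigma$ (swapped by the deck transformation), and for a one-sided essential simple closed curve $c$ the preimage $p^{-1}(c)$ is a single connected double cover of $c$. In either case, $p^{-1}(c)$ is essential in $\Sigma$ (this uses that $p$ is $\pi_1$-injective and $c$ is essential). The key observation is that $p^{-1}$ is injective on isotopy classes: if $p^{-1}(c)$ and $p^{-1}(c')$ are isotopic in $\Sigma$, then pushing the isotopy down via the covering — or invoking that the deck group acts compatibly and that $c, c'$ are the quotients — forces $c$ and $c'$ to be isotopic in $F$. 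Granting this, the equality $L(\varphi) \circ p^{-1} = p^{-1} \circ \varphi$ on isotopy classes of curves (which holds because $L(\varphi)$ is defined as the lift of $\varphi$, hence commutes with $p$ up to isotopy) shows that if $L(\varphi) = \id$ in $\MCG_+(\Sigma)$, then $p^{-1}(\varphi(c)) = p^{-1}(c)$ for every essential curve $c$ on $F$, whence $\varphi(c) = c$.

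Then I would invoke the Alexander method for the (possibly non-orientable) surface $F$ with non-empty boundary: there is a finite collection of essential simple closed curves and arcs on $F$ that fill $F$ and whose isotopy classes are preserved by a mapping class only if that mapping class is trivial (one must also check the action on the arcs, which I would handle by doubling $F$ along part of its boundary or by including in the collection curves that encircle boundary-parallel pieces so that arcs are promoted to curves; alternatively one restricts attention to a generating set of $\pi_1(F)$ realized by curves and arcs). Since $\varphi$ fixes the isotopy class of every such curve, the Alexander method yields $\varphi = \id$ in $\MCG(F)$, proving injectivity.

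The main obstacle I anticipate is the careful treatment of \emph{arcs} versus \emph{closed curves}: the Alexander method in its cleanest form for surfaces with boundary uses properly embedded essential arcs, but the lifting statement above was phrased for closed curves, and the preimage of an arc is an arc or a pair of arcs on $\Sigma$, not a closed curve, so one must either (i) extend the "$p^{-1}$ injective on isotopy classes and equivariant" argument to arcs, which is routine but requires being careful about endpoints on $\partial \Sigma = p^{-1}(\partial F)$, or (ii) work entirely with closed curves on $F$ — which is possible because $F$ has negative Euler characteristic and non-empty boundary, so there is a filling collection of essential \emph{simple closed} curves sufficient for the Alexander method provided we also remember the (setwise-trivial) action on boundary components. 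I would choose route (ii) to keep the covering-space bookkeeping minimal, and the only genuinely delicate point is verifying that a filling system of closed curves alone suffices to detect triviality in $\MCG(F)$ for non-orientable $F$ with boundary, which follows from standard references on non-orientable mapping class groups.
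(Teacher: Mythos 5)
Your approach is genuinely different from the paper's, and as written it has two unfilled gaps. The paper never argues directly with curve systems: it extends $\varphi_F$ to a fiber-preserving automorphism $\Phi$ of the handlebody $H = F\tilde\times I$, observes that $\Phi|_\Sigma = L(\varphi_F) \simeq \id_\Sigma$ forces $\Phi|_{\partial H}$ to be (up to isotopy) a product of Dehn twists about the core curves of the annuli $\pi^{-1}(\partial F)$, applies the Oertel/McCullough theorem (those cores bound no disks and cobound no annuli in $H$) to conclude $\Phi|_{\partial H} \simeq \id$ and hence $\Phi \simeq \id_H$, and finally uses that $F \hookrightarrow H$ is a homotopy equivalence together with Epstein's theorem to get $\varphi_F \simeq \id_F$. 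This 3-dimensional route handles arcs, the boundary, and low-complexity surfaces all at once.

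The gaps in your proposal are the following. (1) ``Pushing the isotopy down via the covering'' is not a valid move: an isotopy of $\Sigma$ need not be equivariant under the deck involution $\sigma$, so it does not descend to $F$. The injectivity of $p^{-1}$ on isotopy classes is in fact true (one way is to pass to geodesic representatives for a hyperbolic metric pulled back from $F$, which are automatically $\sigma$-invariant), but this requires a real argument and must treat separately the cases where $p^{-1}(c)$ is connected ($c$ one-sided) versus a $\sigma$-orbit of two curves ($c$ two-sided). (2) You lean on a closed-curve-only Alexander method for non-orientable $F$ with free boundary, deferring to ``standard references,'' but this is precisely the delicate point: the hypotheses allow $F = N_{2,1}$ (Klein bottle with one hole, $\chi(F)=-1$), a sporadic surface with very few isotopy classes of essential simple closed curves, and it is not clear that $\MCG(F)$ acts faithfully on those classes alone. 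Switching to route (i) (arcs and curves) would repair this, but then you must redo (1) for arcs, with additional care about endpoints on $\partial\Sigma$ and the free-boundary convention. Both gaps are fillable, but the resulting surface-theoretic argument is longer and more delicate than the handlebody argument, which is presumably why the authors chose the latter.
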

\begin{proof}
Let $F \tilde\times I$ be the orientable twisted product, which is a handlebody, and 
$\pi: F \tilde\times I \to F$ the natural projection. 
We identify $\Sigma$ with $F \tilde\times \partial{I} \subset F \tilde\times I$, and 
$F$ with $F \tilde\times \{ 1/2 \} \subset F \tilde\times I$. 
Note that $\pi|_\Sigma$ is nothing but the orientation double cover $p : \Sigma \to F$. 

Let $\varphi_F$ be an automorphism of $F$ whose mapping class belongs to 
the kernel of $L: \MCG (F) \to \MCG_+ (\Sigma)$.  
The map $\varphi_F$ extends to a fiber-preserving homeomorphism $\Phi \in \Aut_+ (F \tilde\times I)$ 
with $\varphi := \Phi|_{\Sigma} = L(\varphi_F)$.  
The map $\varphi$ is isotopic to the identity $\id_{\Sigma}$, thus, 
$\Phi|_{\partial (F \tilde\times I )}$ can be described as 
\[ \Phi|_{\partial (F \tilde\times I )} = \tau_{c_1}^{k_1} \circ \cdots \circ \tau_{c_n}^{k_n} , \]
where $c_1, \ldots, c_n$ are the connected components of $\partial F$, 
$\tau_{c_i}^{k_i}$ is the Dehn twist about the simple closed curve $c_i$ ($i=1, \ldots , n$), 
and  $k_1, \ldots , k_n$ are integers.  
Since each $c_i$ does not bound a disk in $F \tilde\times I$, 
and each pair of $c_i$ and $c_j$ ($1 \leq i < j \leq n$) does not cobound an annulus, 
we have $k_1 = \cdots = k_n = 0$
due to Oertel \cite{Oer02} or McCullough \cite{McC06}. 
Therefore,  $\Phi$ is isotopic to the identity $\id_{\partial (F \tilde\times I )}$, 
so  $\Phi$ is isotopic to the identity $\id_{F \tilde\times I }$. 
Since the inclusion $\iota : F \to F \tilde\times I$ is a homotopy equivalence with $\pi$ a homotopy inverse, 
the composition $\pi \circ \Phi \circ \iota$ is homotopic to $\id_F$. 
It follows that $\varphi_F$ is homotopic to the identity. 
Now by Epstein \cite{Eps66}, $\varphi_F$ is isotopic to $\id_F$. 
\end{proof}

\subsection{Pants decompositions and twisting numbers}
\label{subsec:Pants decompositions and twisting numbers}

Let $\Sigma$ be a closed orientable surface of genus $g$, where $g \geq 2$. 
The set of $3g - 3$ mutually disjoint, mutually non-isotopic, essential simple closed curves 
on $\Sigma$ is called a {\it pants decomposition} of $\Sigma$. 
Let $\mathscr{P}$ be a pants decomposition of $\Sigma$. 
Let $C$ be the union of the simple closed curves of $\mathscr{P}$. 
Let $\alpha$ be an essential arc on a component $P$, which is a pair of pants, of 
$\Cl (\Sigma - \Nbd (C))$. 
We call $\alpha$ a {\it wave for $\mathscr{P}$}  
if the both endpoints of $\alpha$ lie on the same component of $\partial P$. 
Otherwise, $\alpha$ is called a {\it seam for $\mathscr{P}$}. 
Let $k > 0$. 
An essential simple closed curve $\beta$ on $\Sigma$ (that intersects $C$ minimally up to isotopy) 
is said to be 
{\it $k$-seamed with respect to $\mathscr{P}$} if for each component $P$ 
of $\Cl (\Sigma - \Nbd (C))$, 
there exist at least $k$ arcs of $\beta \cap P$ representing each of the three distinct isotopy classes of 
seams for $\mathscr{P}$. 

Let $l$ be a simple closed curve on a closed oriented surface $\Sigma$ of genus at least $2$. 
We denote by $\tau_l$ the (left-handed) {\it Dehn twist} about $l$. 
Let $\mathscr{P}$ be a pants decomposition of $\Sigma$. 
Let $C$ be the union of the simple closed curves of $\mathscr{P}$. 
Set $N := \Nbd (l)$. 
Fix an identification of $N$ with the product $l \times I$, where 
$l$ corresponds to $l \times \{ 1/2 \}$. 
We may assume that each component of $N \cap C$ is an $I$-fiber of $N$. 
Let $\alpha$ be an essential simple arc on $N$ with the endpoints disjoint from $N \cap C$ that intersects 
each $I$-fiber of $N$ transversely. 
Then the {\it twisting number} of $\alpha$ in $N$ with respect to $C$ is defined as follows. 
Let $p$ be an endpoint of $\alpha$. 
Let $v_\alpha$ be the inward-pointing tangent vector of $\alpha$ based at $p$. 
Likewise, let $v_I$ be the inward-pointing tangent vector based at $p$ of the $I$-fiber of $N$ with $p$ an endpoint. 
If the pair $(v_\alpha, v_I)$ is compatible with the orientation of $\Sigma$, 
the twisting number is defined to be $ \# (\alpha \cap C) / \# (N \cap C) \in \Rational$. 
Otherwise, it is defined to be $ - \# (\alpha \cap C) / \# (N \cap C) \in \Rational$.  
See Figure~\ref{fig:twisting_number}. 
\begin{figure}[htbp]
\begin{center}
\includegraphics[width=5cm,clip]{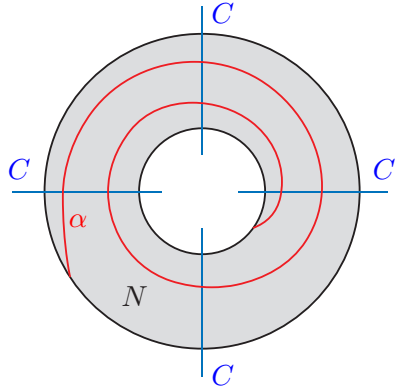}
\begin{picture}(400,0)(0,0)
\put(150,70){\color{red}$\alpha$}
\put(127,88){\color{blue}$C$}
\put(265,88){\color{blue}$C$}
\put(204,10){\color{blue}$C$}
\put(204,147){\color{blue}$C$}
\put(170,40){$N$}
\end{picture}
\caption{The twisting number of the arc $\alpha$ in $N$ with respect to $C$ is $7 / 4$.}
\label{fig:twisting_number}
\end{center}
\end{figure}
We refer the reader to Yoshizawa \cite{Yos14} for more details on the twisting numbers. 

Let $\Sigma$, $l$, $N$, $\mathscr{P}$ and $C$ be as above. 
Let $\beta$ be a simple closed curve on $\Sigma$. 
We say that $\beta$ is in {\it efficient position} with respect to $(N, C)$ if 
\begin{itemize}
\item
$\beta$ intersects $\partial N$ and $C$ minimally (up to isotopy); 
\item
$\beta$ intersects each $I$-fiber of $N$ transversely; and 
\item
$\beta \cap C \cap \partial N = \emptyset$. 
\end{itemize}
Suppose that $\beta$ is in efficient position with respect to 
$(N, C)$.  
A disk $E$ in $\Sigma - \Int (N)$ is called an {\it outer triangle} of $N$ with respect to 
$(N, \mathscr{P}, \beta)$ if 
$\partial E \subset \partial N \cup C \cup \beta$ and 
each of $\partial E \cap \partial N$, $\partial E \cap C$, $\partial E \cap \beta$  
is a single arc. See Figure~\ref{fig:outer_triangle}.  
\begin{figure}[htbp]
\begin{center}
\includegraphics[width=5cm,clip]{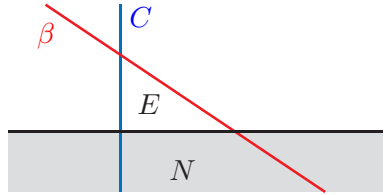}
\begin{picture}(400,0)(0,0)
\put(140,70){\color{red}$\beta$}
\put(175,76){\color{blue}$C$}
\put(190,18){$N$}
\put(178,43){$E$}
\end{picture}
\caption{An outer trianble $E$ of $N$ with respect to $(N, \mathscr{P}, \beta)$.}
\label{fig:outer_triangle}
\end{center}
\end{figure}
Note that we can perform an isotopy of $\beta$ keeping that $\beta$ 
is in efficient position with respect to $(N, C)$ so that 
$(N, \mathscr{P}, \beta)$ admits no outer triangles. 

\begin{lemma}[Yoshizawa \cite{Yos14}]
\label{lem:minimality of twisting number and outer triangles}
Let $\Sigma$, $l$, $N$, $\mathscr{P}$ and $C$ be as above. 
Let $\beta$ be a simple closed curve on $\Sigma$ in efficient position with respect to $(N, C)$ 
such that $(N, \mathscr{P}, \beta)$ admits no outer triangles. 
Let $\alpha_1, \ldots, \alpha_r$ be the components of $\beta \cap N$, and 
$t_j$ $(j \in \{1 , \ldots , r \})$ the twisting number of $\alpha_j$ in $N$ with respect to $C$. 
Let $k$ be an integer such that either $ k + t_j \geq 0$ $($for all $j)$ or 
$ k + t_j \leq 0$ $($for all $j)$. 
Then  $\tau_l^k ( \beta )$ remains to be in efficient position with respect to $(N , C)$, 
and the twisting number of $\tau_l^k (\alpha_j)$ in $N$ with respect to $C$ is $k + t_j$. 
\end{lemma}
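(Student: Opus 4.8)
The plan is to track what $\tau_l^k$ does to $\beta$ locally inside $N = l \times I$ and to verify that no new intersections are created with $C$ or $\partial N$, so that efficient position is preserved and the twisting numbers transform by the expected additive shift. First I would recall that, with the fixed product structure $N \cong l \times I$ in which each component of $N \cap C$ is an $I$-fiber, the Dehn twist $\tau_l$ can be chosen to be supported in $N$ and to act on each arc $\alpha_j$ of $\beta \cap N$ by inserting $|k|$ full twists running parallel to $l \times \{1/2\}$; the endpoints of $\alpha_j$ on $\partial N$ are fixed. Since $\beta \cap C \cap \partial N = \emptyset$ and the endpoints of each $\alpha_j$ avoid $N \cap C$, the twisted arc $\tau_l^k(\alpha_j)$ still meets each $I$-fiber transversely after the standard straightening, and its algebraic twisting number is $k + t_j$ by the very definition of the twisting number as a signed count $\pm \#(\alpha_j \cap C)/\#(N \cap C)$ — adding $k$ full twists adds $k \cdot \#(N \cap C)$ to the numerator with the correct sign. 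The content of the lemma is that this algebraic count is also the \emph{geometric} (minimal) one, i.e. that $\tau_l^k(\beta)$ is still in efficient position, which is exactly the hypothesis $k + t_j \geq 0$ for all $j$, or $k + t_j \leq 0$ for all $j$, that prevents the inserted twisting from cancelling against the pre-existing twisting of some $\alpha_j$.

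Next I would show that $\tau_l^k(\beta)$ intersects $\partial N$ minimally. A bigon between $\tau_l^k(\beta)$ and $\partial N$ would, after undoing the $k$ twists, give either a bigon between $\beta$ and $\partial N$ (contradicting that $\beta$ is in efficient position) or else it would have to be created by the twisting itself, which would force two of the arcs $\tau_l^k(\alpha_j)$, $\tau_l^k(\alpha_{j'})$ to twist in opposite directions — but the common-sign hypothesis on $k + t_j$ rules this out. The same dichotomy handles intersections with $C$: a bigon between $\tau_l^k(\beta)$ and $C$ sitting inside $N$ would again require opposite twisting signs, while a bigon meeting $\Sigma - \Int(N)$ would be disjoint from the support of $\tau_l^k$ and hence already present for $\beta$. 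This is where I would invoke the "no outer triangles" hypothesis most seriously: an outer triangle for $(N, \mathscr{P}, \beta)$ is precisely the local configuration whose presence would allow the twist to reduce, rather than preserve, the geometric intersection number, so its absence together with the sign condition is what makes the straightened position automatically minimal. I would spell out that if $(N,\mathscr{P},\beta)$ admits no outer triangle then neither does $(N,\mathscr{P},\tau_l^k(\beta))$ under the sign hypothesis, because an outer triangle of the latter would untwist to an outer triangle of the former.

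Finally I would assemble the three bullet conditions: transversality to $I$-fibers is clear from the local model, minimality with respect to $\partial N$ and $C$ follows from the bigon analysis above, and $\tau_l^k(\beta) \cap C \cap \partial N = \emptyset$ holds because $\tau_l^k$ fixes $\partial N$ pointwise (up to isotopy supported away from $C \cap \partial N$) and $\beta \cap C \cap \partial N = \emptyset$. Then the twisting-number computation, already observed at the algebraic level, is the geometric one because efficient position has been re-established. The main obstacle I expect is the bigon/outer-triangle bookkeeping in the second paragraph: one must be careful that a reducing bigon for $\tau_l^k(\beta)$ cannot straddle $\partial N$ in a way that is partly inside and partly outside the support of the twist, and it is exactly here that both the no-outer-triangle hypothesis and the uniform sign of $k + t_j$ are used in tandem. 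In the write-up I would simply cite Yoshizawa \cite{Yos14} for this local analysis, since it is carried out there in detail.
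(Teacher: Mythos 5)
The paper does not actually prove this lemma: it is stated with the attribution ``[Yoshizawa \cite{Yos14}]'' and no proof is given, so there is no argument in the paper against which to check you step by step. Your sketch is a reasonable reconstruction of the underlying mechanism — the twist is supported in $N$, the algebraic shift of the twisting number by $k$ is immediate, and the joint hypotheses (uniform sign of $k + t_j$, absence of outer triangles) are exactly what rule out bigons for $\tau_l^k(\beta)$ with $\partial N$ and $C$ — and you correctly close by deferring to Yoshizawa for the actual bookkeeping, which is what the paper does as well. The one place your heuristic is imprecise is the claim that a would-be bigon straddling $\partial N$ ``would force two of the arcs $\tau_l^k(\alpha_j)$, $\tau_l^k(\alpha_{j'})$ to twist in opposite directions.'' That is not quite the right picture: a bigon between $\tau_l^k(\beta)$ and $C$ (resp.\ $\partial N$) straddling $\partial N$ is created when the inserted twisting of a \emph{single} arc $\alpha_j$ partially cancels against an outer triangle at its endpoint, and the uniform-sign hypothesis is what guarantees that such cancellation cannot happen once all the $|t_j|$ are accounted for. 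If you want to turn the sketch into a proof rather than a citation, that is the step that needs to be made precise; otherwise, citing Yoshizawa is exactly what the authors do.
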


The following lemma, which we will use in 
Section \ref{subsec:The Goeritz groups of distance-2 Heegaard splittings}, is straightforward from the definitions. 
\begin{lemma}
\label{lem:lower bound for the number of seams}
Let $\Sigma$, $l$, $N$, $\mathscr{P}$, $\beta$, $\alpha_j$ and $t_j$
$(j \in \{ 1 , \ldots , r \})$ be as in Lemma~$\ref{lem:minimality of twisting number and outer triangles}$. 
If $l$ is $1$-seamed with respect to the pants decomposition $\mathscr{P}$ and 
there exists $j$ with $|t_j| > k$, then $\beta$ is $k$-seamed with respect to $\mathscr{P}$. 
\end{lemma}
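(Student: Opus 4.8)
The plan is to unwind the definitions of "$k$-seamed" and "twisting number" and show that the hypotheses force many seam-arcs in every pair of pants of $\Cl(\Sigma - \Nbd(C))$. Since $l$ is $1$-seamed with respect to $\mathscr{P}$, inside each pair of pants $P$ of $\Cl(\Sigma - \Nbd(C))$ the curve $l$ already contributes at least one arc in each of the three isotopy classes of seams for $\mathscr{P}$; in particular the three "corners" of $P$ near $\partial N \cap P$ are all traversed by $l$. The point is that an arc $\alpha_j$ of $\beta \cap N$ with $|t_j| > k$ is so long (it wraps around $N$ more than $k$ times) that applying $\tau_l^{-\lfloor t_j \rfloor}$-type reasoning, or rather directly counting, forces $\beta$ to run alongside $l$ many times, hence to copy each of $l$'s three seam-classes at least $k$ times in each $P$.

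More concretely, first I would fix the pair of pants $P$ of $\Cl(\Sigma - \Nbd(C))$ meeting $N$ on the side where the long arc $\alpha_j$ enters, and recall that the twisting number $t_j = \pm \#(\alpha_j \cap C)/\#(N \cap C)$ records how many times $\alpha_j$ crosses the fibers $N \cap C$; so $|t_j| > k$ means $\alpha_j$ crosses at least $k \cdot \#(N \cap C) + 1$ of the $I$-fibers lying in $C$. Each such crossing deposits a subarc of $\beta$ in $P$ running parallel to the corresponding subarc of $l \cap P$. Because $l$ is $1$-seamed, among the $\#(N \cap C)$ fibers of $N$ the three seam-classes of $l$ in $P$ each occur; and since $\alpha_j$ runs parallel to $l$ for at least $k$ full cycles around these fibers, in each such cycle $\beta$ contributes one arc in each of the three seam-classes. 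Summing over the $\geq k$ cycles gives at least $k$ arcs of $\beta \cap P$ in each of the three isotopy classes of seams for $\mathscr{P}$, which is exactly the definition of $\beta$ being $k$-seamed. The fact that $(N,\mathscr{P},\beta)$ admits no outer triangles (inherited from the hypothesis of Lemma~\ref{lem:minimality of twisting number and outer triangles}) is what guarantees these parallel copies cannot be isotoped away, so the count of isotopy classes is honest.

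The main obstacle I anticipate is the bookkeeping at the "junctions": where an arc $\alpha_j$ exits $N$ and enters a pair of pants $P$, one must verify that a long $\alpha_j$ genuinely contributes an arc of $\beta \cap P$ in each seam class on \emph{every} pair of pants adjacent to $N$, not merely on one side, and that no two of these contributed arcs are accidentally isotopic to each other (which would undercount). This is where efficient position and the no-outer-triangle hypothesis do the real work, and where a careful picture — essentially Figure~\ref{fig:twisting_number} with $\beta$ overlaid on $l$ — pins down the combinatorics. I expect the argument to be short once the right local picture is drawn: it is "straightforward from the definitions" in the sense that no new idea beyond Lemma~\ref{lem:minimality of twisting number and outer triangles} is needed, only a transcription of the twisting-number count into a seam count. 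I would present it in two or three paragraphs: first reducing to a single pair of pants and recalling the local model of $N$, then counting parallel copies of $l$'s seam-arcs forced by $|t_j| > k$, and finally concluding that each of the three seam-classes appears at least $k$ times.
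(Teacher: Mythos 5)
The paper gives no proof of this lemma (it is asserted as ``straightforward from the definitions''), so there is nothing to compare against, but your argument is correct and is the natural one. Let me just tighten the counting and dispose of the two worries you raised at the end. Write $m := \#(N \cap C)$. Since $|t_j| > k$ and $|t_j|\,m = \#(\alpha_j \cap C)$ is an integer, $\alpha_j$ meets $C$ at least $km+1$ times; the $I$-fibers of $N \cap C$ are hit in cyclic order, so the $\geq km$ sub-arcs of $\alpha_j$ between \emph{consecutive} intersection points (it is consecutive pairs, not individual crossings, that produce the sub-arcs) fall into the $m$ ``gaps'' between adjacent fibers, with at least $\lfloor km/m\rfloor = k$ landing in each gap. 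Each such sub-arc is a genuine component of $\beta \cap P$ for the pair of pants $P$ containing that gap and is parallel in $N$ to the arc of $l$ in the same gap; since an essential arc in a pair of pants is determined up to isotopy by which boundary components it joins, it represents the same seam class as $l$'s arc. Because $l$ is $1$-seamed, in every $P$ each of the three seam classes is realized by $l$'s arc in some gap, hence by $\geq k$ sub-arcs of $\alpha_j$. This resolves your concern about ``every pair of pants, not merely one side'': the cyclic traversal forces $\alpha_j$ to pass through every gap, hence to shadow $l$ in every $P$ that $l$ visits, and $1$-seamedness forces $l$ to visit them all. Your other worry, that the contributed arcs might be ``accidentally isotopic'', is moot: the definition of $k$-seamed asks for $k$ arcs \emph{representing} each seam class, not $k$ distinct classes, so parallel copies count. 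Finally, the no-outer-triangle hypothesis is not actually what is doing the work here (it is needed in Lemma~\ref{lem:minimality of twisting number and outer triangles} to control twisting numbers under $\tau_l^k$); what you need is just efficient position, which guarantees that the sub-arcs between consecutive $C$-crossings are honest, essential components of $\beta \cap P$.
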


\subsection{Measured laminations}
\label{subsec:Measured laminations}

In this subsection, $\Sigma$ denotes a compact (possible non-orientable) surface with 
$\chi(\Sigma) < 0$, where $\chi ( \cdot )$ denotes the Euler characteristic. 
We fix a hyperbolic metric on $\Int \, \Sigma$. 
The main references for this subsection are Fathi-Laudenbach-Po\'{e}naru \cite{FLP79} and 
Penner-Harer \cite{PH92}. 

Recall that a {\it geodesic lamination} on $\Sigma$ is a foliation of a non-empty closed subset of $\Sigma$ by geodesics. 
A {\it transverse measure} $m$ for a geodesic lamination $\lambda$ is a function that 
assigns a positive real number to each smooth compact arc transverse to $\lambda$ 
so that $m$ is invariant under isotopy respecting the leaves of $\lambda$. 
A geodesic lamination equipped with a transverse measure is called a {\it measured geodesic lamination}. 
The set $\ML(\Sigma)$ of measured geodesic laminations on $\Sigma$  can be equipped with 
the weak-* topology, 
for which two measured geodesic laminations are close if they induce approximately the same measures 
on any finitely many arcs transverse to them. 
The quotient $\mathbb{P} \ML (\Sigma)$ of $\ML (\Sigma) $ under 
the natural action of the multiplicative group $\Real_+ := (0, \infty)$ is called the 
{\it projective measured geodesic lamination}. 

\begin{theorem}[Thurston \cite{Thu88}]
\label{thm:PL structure on ML}
\begin{enumerate}
\item
The space $\ML (\Sigma)$ $($resp. $\mathbb{P} \ML (\Sigma)$$)$ 
admits a natural piecewise linear $($resp. piecewise projective$)$ structure. 
\item
There exists a piecewise linear $($resp. piecewise projective$)$ homeomorphism between 
$\ML (\Sigma)$ $($resp. $\mathbb{P} \ML (\Sigma)$$)$ and 
$\Real^{6g + 3 h + 2n - 6} - \{ 0 \}$  
$($resp. $S^{6g + 3h + 2n - 7}$$)$, 
where $\Sigma \cong ( \#_{g} T^2 ) \# (\#_h \mathbb{RP}^2 ) - \sqcup_{n} \Int D^2 $. 
\end{enumerate}
\end{theorem}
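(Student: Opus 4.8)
The statement is a classical result of Thurston, so the plan is to recall the standard proof via train tracks rather than to reprove it from scratch; I would cite Fathi--Laudenbach--Po\'enaru \cite{FLP79}, Penner--Harer \cite{PH92}, and Thurston \cite{Thu88} as the references, but sketch the argument for completeness. The key object is a maximal (generic, trivalent) train track $\tau$ on $\Sigma$, a smoothly embedded graph with a well-defined tangent line at each vertex, chosen so that every complementary region is a triangle, a once-punctured monogon, or (near a boundary component of $\Sigma$) an annulus or a boundary-parallel piece. For a non-orientable $\Sigma \cong (\#_g T^2) \# (\#_h \mathbb{RP}^2) - \sqcup_n \Int D^2$ one first passes to train tracks on the surface with the boundary circles, using the convention that laminations are not allowed to spiral onto $\partial\Sigma$ nor to have compact leaves that are boundary-parallel; a maximal such $\tau$ has exactly $E = 6g + 3h + 2n - 6$ branches and $V = (2/3)E$ switches.

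First I would record the carrying map: every measured geodesic lamination $\lambda$ can be isotoped to be \emph{carried} by some maximal train track $\tau$, and the transverse measure then assigns a non-negative real weight to each branch of $\tau$. These weights are constrained exactly by the \emph{switch conditions} — at each switch the sum of the weights of the incoming branches equals that of the outgoing branches — a system of linear equations, one per switch. Conversely any non-negative weight system satisfying the switch conditions and not identically zero is realized by a unique measured lamination carried by $\tau$ (uniqueness uses that the complementary regions are "small", so no weight can be hidden). Thus the cone $V(\tau)$ of admissible weight systems is a rational polyhedral cone in $\Real^E_{\geq 0}$, and the switch equations cut the ambient $\Real^E$ down to a linear subspace of dimension $E - V = E/3 = $ (half of $E$)$\,+\,$(correction), and a dimension count on a maximal track gives exactly $6g + 3h + 2n - 6$; the interior of $V(\tau)$, consisting of strictly positive weight systems, parametrizes the laminations carried \emph{recurrently} and \emph{fully} by $\tau$.

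Next I would globalize: letting $\tau$ range over the finitely many (up to isotopy, after fixing combinatorial type) maximal train tracks needed to carry everything, the charts $V(\tau)$ cover $\ML(\Sigma) - \{0\}$, and on overlaps the transition maps are given by the elementary combinatorial moves on train tracks (splitting, shifting), each of which is piecewise linear with integral matrices in the branch-weight coordinates. This produces the piecewise linear structure in (1) and simultaneously realizes $\ML(\Sigma)$ as a finite union of top-dimensional rational polyhedral cones glued along faces, all of the same dimension $N := 6g + 3h + 2n - 6$; since $\ML(\Sigma)$ is connected and each top cone is a copy of a cone in $\Real^N$, a standard argument (e.g. developing the PL structure, using that the complement of the union of codimension-$\geq 2$ faces is connected and simply connected in the relevant sense) identifies $\ML(\Sigma)$ PL-homeomorphically with $\Real^N - \{0\}$. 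Projectivizing by the $\Real_+$-action removes the origin and collapses each ray, yielding $S^{N-1}$ with the induced piecewise projective structure; this gives (2).

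The main obstacle — and the step I would be most careful about — is the global identification with $\Real^N - \{0\}$ rather than merely "a PL manifold of dimension $N$". Locally the charts give a PL manifold structure; the content is that the manifold is PL-homeomorphic to Euclidean space minus a point. The cleanest route I know is Thurston's: show $\ML(\Sigma)$ is a cone (the $\Real_+$-action has the origin $0$ as its only fixed point and every orbit closure contains $0$), that $\mathbb{P}\ML(\Sigma)$ is a closed PL manifold of dimension $N-1$, and then separately identify $\mathbb{P}\ML(\Sigma)$ with $S^{N-1}$ using that it is the boundary at infinity of Teichm\"uller space, or, more elementarily, by exhibiting an explicit PL sphere structure from a single "fundamental" collection of train tracks whose weight simplices tile it. Since this is quoted as a known theorem, in the paper I would simply cite \cite{Thu88}, \cite{FLP79}, and \cite{PH92} for the full argument and content myself with the train-track sketch above.
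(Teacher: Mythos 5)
The paper does not prove this theorem; it is stated as a citation to Thurston, with \cite{FLP79} and \cite{PH92} given earlier in the subsection as general references. Your instinct at the end — to simply cite and not reprove — therefore matches what the paper actually does, and your decision to give a train-track sketch for completeness is reasonable and goes beyond the paper.

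That said, the sketch as written contains a dimension-count confusion that you should fix before including it anywhere. Set $N := 6g + 3h + 2n - 6$. For a generic (trivalent) maximal train track $\tau$ on $\Sigma$, the relation $2E = 3V$ holds, and the dimension of the weight space $V(\tau)$ is $E - V$, not $E$; to get $E - V = N$ one needs $E = 3N$ and $V = 2N$ (for the closed orientable case this is the familiar $E = 18g - 18$, $V = 12g - 12$, $E - V = 6g - 6$). Your text first asserts ``a maximal such $\tau$ has exactly $E = 6g + 3h + 2n - 6$ branches,'' which conflates the branch count with the dimension, and then the parenthetical ``$E - V = E/3 = $ (half of $E$) $+$ (correction)'' is internally inconsistent: if $V = 2E/3$ then $E - V = E/3$, which is one third of $E$, not ``half of $E$ plus a correction,'' and in any case $E/3 \neq N$ if $E = N$. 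The rest of the outline — carrying, switch conditions giving polyhedral cones, transition maps under splitting/shifting being integral PL, projectivizing to a sphere — is the standard Penner--Harer/Thurston picture and is fine in spirit, though you are right to flag that the global identification of $\mathbb{P}\ML(\Sigma)$ with $S^{N-1}$ (rather than merely a closed PL $(N-1)$-manifold) is the genuinely nontrivial step and is best left to the references. One more caution: for surfaces with boundary and for non-orientable surfaces the conventions (no spiralling onto $\partial\Sigma$, treatment of one-sided curves, whether one works on the surface or its orientation double cover) affect the count, and a casual reader could be misled by a sketch that does not spell these out; since the paper only needs the homeomorphism type $S^{N-1}$ as a black box, quoting the result is the safer course.
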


A multiset of pairwise disjoint, pairwise non-isotopic, closed geodesics on $\Sigma$ 
is called a {\it weighted multicurve}. 
The set of multicurves on $\Sigma$ is denoted by $\mathcal{S} (\Sigma)$. 
Using the Dirac mass, we regard 
$\mathcal{S} (\Sigma)$ as a subset of $\mathbb{P} \ML(\Sigma)$. 
We will use the following theorem in 
Section~\ref{subsec:The Goeritz groups of distance-4 Heegaard splittings}. 

\begin{theorem}[see Penner-Harer \cite{PH92}]
\label{thm:multicurves in ML}
The set $\mathcal{S} (\Sigma)$ is dense in $\mathbb{P} \ML(\Sigma)$. 
\end{theorem}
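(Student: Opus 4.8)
The plan is to parametrize $\mathbb{P}\ML(\Sigma)$ by train tracks and observe that the rationally weighted multicurves fill out a dense set in each chart. Recall that a complete (generic, filling) train track $\tau$ on $\Sigma$ carries a closed subset $V(\tau)\subseteq\ML(\Sigma)$, namely the measured laminations that can be isotoped into a small fibered neighborhood of $\tau$ transversely to the fibers; assigning to such a lamination the vector of its transverse weights on the branches of $\tau$ gives a homeomorphism from $V(\tau)$ onto the set $W(\tau)$ of non-negative real solutions of the switch equations of $\tau$. Two facts are the whole point. First, the switch equations are homogeneous linear equations with integer coefficients, so $W(\tau)$ is a rational polyhedral cone and its rational points are dense in it. Second, a non-negative \emph{integral} weight vector in $W(\tau)$ corresponds, under the above homeomorphism, to a weighted multicurve carried by $\tau$: one replaces each branch by that many parallel strands and splices them at the switches, the switch equations guaranteeing that the splicing is consistent, and conversely the weight vector of any multicurve carried by $\tau$ is integral. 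These are exactly the local ingredients behind Thurston's Theorem~\ref{thm:PL structure on ML} and are worked out for orientable surfaces in Penner--Harer \cite{PH92}.

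Combining the two facts, in each chart $V(\tau)\cong W(\tau)$ the rational points of $W(\tau)$ are dense, and a rational point of $W(\tau)$ is $1/N$ times an integral one for a suitable $N\in\mathbb{Z}_{>0}$, hence corresponds to $\tfrac1N$ times a weighted multicurve carried by $\tau$. Therefore the set of \emph{rationally} weighted multicurves is dense in $V(\tau)$ for every complete $\tau$; since every measured geodesic lamination on $\Sigma$ is carried by some complete train track, the sets $V(\tau)$ cover $\ML(\Sigma)$, and density in every member of a cover gives density in the union, so the rationally weighted multicurves are dense in $\ML(\Sigma)$. Finally, the projectivization map $q:\ML(\Sigma)\to\mathbb{P}\ML(\Sigma)$ is a continuous surjection, so it sends this dense set to a dense subset of $\mathbb{P}\ML(\Sigma)$; but that image equals $q(\mathcal{S}(\Sigma))$, because $\tfrac1N m$ and $m$ represent the same point of $\mathbb{P}\ML(\Sigma)$. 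Hence $\mathcal{S}(\Sigma)$ is dense in $\mathbb{P}\ML(\Sigma)$.

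The step requiring the most care is checking that the train-track carrying picture, together with the statement that complete train tracks carry everything, holds verbatim for the (possibly non-orientable) $\Sigma$ considered here: one must verify that nothing in the passage between integral weight vectors and embedded multicurves, nor in the recurrence arguments, secretly uses a global orientation. If one prefers not to reprove that theory in the non-orientable setting, the fallback is a double-cover reduction: with $p:\widetilde{\Sigma}\to\Sigma$ the orientation double cover and $\iota$ its deck involution, $\ML(\Sigma)$ is identified with the $\iota$-fixed locus in $\ML(\widetilde{\Sigma})$ and $\mathcal{S}(\Sigma)$ with the $\iota$-invariant weighted multicurves on $\widetilde{\Sigma}$; one then lifts a complete train track $\tau$ on $\Sigma$ to the $\iota$-invariant train track $p^{-1}(\tau)$ on the orientable surface $\widetilde{\Sigma}$, applies the results of \cite{PH92} there, and runs the same rational-density argument on the linear subspace of $\iota$-invariant weight vectors — the only extra point to check being that an invariant integral weight vector yields an invariant multicurve.
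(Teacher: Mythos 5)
The paper gives no proof of this statement; it is cited to Penner--Harer \cite{PH92}, and your argument is precisely the standard train-track proof found there: integral weight vectors on a complete train track are multicurves, rational points are dense in the rational polyhedral cone of weights, charts cover $\ML(\Sigma)$, and continuity of the projectivization finishes it. So your approach and the paper's implicit approach coincide.

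One point worth keeping, though, since the paper actually applies Theorem~\ref{thm:multicurves in ML} to $\mathcal{S}(F)$ with $F$ \emph{non-orientable} in the proof of Lemma~\ref{lem:F is nowhere dense in PML}: you are right that \cite{PH92} is written for orientable surfaces, and your fallback via the orientation double cover $p:\widetilde{\Sigma}\to\Sigma$ is the clean way to handle it. The identification of $\ML(\Sigma)$ with the $\iota$-fixed locus of $\ML(\widetilde{\Sigma})$ and of $\mathcal{S}(\Sigma)$ with $\iota$-invariant multicurves is correct (a two-sided curve lifts to two $\iota$-swapped components, a one-sided curve lifts to a single $\iota$-invariant double-covering curve, and conversely), the lifted track $p^{-1}(\tau)$ is $\iota$-invariant, and the $\iota$-invariant weight vectors form a rational linear subspace of the weight cone in which the integral points are again dense after scaling. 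That closes the only genuine gap, and the proof is complete.
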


We regard closed geodesics on $\Sigma$ as points in $\ML (\Sigma)$. 
For simple closed geodesics $\alpha$ and $\beta$ on $\Sigma$, 
$i (\alpha , \beta )$ denotes the geometric intersection number. 
For $(\lambda, m) \in \ML (\Sigma)$, $i ( (\lambda, m), \alpha ) $ is 
defined to be the minimal transverse length with respect to the measure $m$ for $\lambda$. 
\begin{theorem}[Rees \cite{Ree81}]
\label{thm:intersection numbers of measured geodesic laminations}
The above $i ( \cdot, \cdot)$ extends to a continuous function 
$\ML (\Sigma) \times \ML (\Sigma) \to \Real$ 
that is bilinear and invariant under the action of $\MCG (\Sigma)$. 
\end{theorem}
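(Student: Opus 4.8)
The plan is to reduce the statement to a continuity assertion and then establish continuity via train-track coordinates. The map $i$ is already defined on weighted multicurves by the geometric intersection number, extended so that $i(a\gamma, b\delta) = ab\, i(\gamma, \delta)$, and, more generally, on $\ML(\Sigma) \times \mathcal{S}(\Sigma)$ by the transverse-measure description recalled just before the statement; on these sets it is symmetric (where defined), homogeneous of degree one in each variable, additive under disjoint union, and $\MCG(\Sigma)$-invariant, and it is continuous in the first variable (essentially by the definition of the weak-* topology, since distinct geodesics meet transversally). By Theorem~\ref{thm:multicurves in ML}, after rescaling, the weighted multicurves are dense in $\ML(\Sigma)$. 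Hence the entire content of the statement is that $i$ admits a \emph{continuous} extension to $\ML(\Sigma) \times \ML(\Sigma)$: such an extension is then unique, and symmetry, homogeneity, $\MCG(\Sigma)$-invariance, and the asserted bilinearity all pass to it by density and continuity.

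To build the extension I would first invoke the standard fact (Penner-Harer \cite{PH92}) that every point of $\ML(\Sigma)$ is carried by a maximal birecurrent train track, and that finitely many such train tracks $\tau_1, \dots, \tau_N$ suffice so that the cones $V(\tau_k) \subset \ML(\Sigma)$ of laminations carried by $\tau_k$ cover $\ML(\Sigma)$; each $V(\tau_k)$ is the convex cone cut out in the space of nonnegative branch weights by the linear switch equations, and these cones realize the piecewise-linear structure of Theorem~\ref{thm:PL structure on ML}. Given two of these train tracks I would isotope them into efficient position with respect to each other (transverse, with no bigon in their common complement), subdividing branches if necessary. Then, for $\mu \in V(\tau_i)$ with branch-weight vector $w = (w_a)$ and $\nu \in V(\tau_j)$ with branch-weight vector $v = (v_b)$, the intersection number is given by the bilinear formula
\[
 i(\mu, \nu) = \sum_{a, b} n_{ab}\, w_a v_b ,
\]
where $a$ runs over the branches of $\tau_i$, $b$ over those of $\tau_j$, and $n_{ab} \in \Integer_{\geq 0}$ is the number of transverse crossings of branch $a$ with branch $b$ in the efficient position (independent of $w$ and $v$). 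When $\mu$ and $\nu$ are weighted multicurves carried by $\tau_i$ and $\tau_j$, this number equals their geometric intersection number, because efficient (bigon-free) position of the two tracks forces the carried multicurves into minimal position; in particular the formula is independent of the chosen efficient position, and any two such bilinear formulas agree on the overlap $V(\tau_i) \cap V(\tau_{i'})$ of two charts (and likewise in the second slot), since they already agree there on the dense set of weighted multicurves and are continuous.

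The locally defined bilinear functions therefore glue to a single continuous function $i \colon \ML(\Sigma) \times \ML(\Sigma) \to \Real$ restricting to the geometric intersection number on weighted multicurves; in the piecewise-linear charts of Theorem~\ref{thm:PL structure on ML} it is given, on each product of two charts, by the restriction of a bilinear form, which is the asserted bilinearity, and in particular $i(s\lambda, t\mu) = st\, i(\lambda, \mu)$ for $s, t > 0$. Invariance under $\MCG(\Sigma)$ then follows, since a mapping class carries train tracks to train tracks preserving efficient position and the coefficients $n_{ab}$ (equivalently, $i$ is $\MCG(\Sigma)$-invariant on the dense set of weighted multicurves and continuous). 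I expect the main obstacle to be the claim used above, namely that efficient position of $\tau_i$ and $\tau_j$ simultaneously realizes minimal position for \emph{every} pair of carried weighted multicurves, so that $\sum_{a,b} n_{ab} w_a v_b$ genuinely computes the geometric intersection number; this rests on the bigon criterion together with the carrying property of train tracks, and it is where the real work lies, the remaining steps being either formal (density and gluing) or routine train-track bookkeeping. For non-orientable $\Sigma$ the same machinery applies; alternatively one may note that the measure-theoretic formula assigning to $\big((\lambda, m), (\mu, m')\big)$ the total mass of $m \times m'$ on the transverse intersection locus of $\lambda$ and $\mu$ agrees with the above and is visibly symmetric and homogeneous, the only remaining point, again, being continuity, which the chart description supplies.
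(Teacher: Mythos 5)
The paper does not prove this: Theorem~\ref{thm:intersection numbers of measured geodesic laminations} is quoted verbatim from Rees~\cite{Ree81}, so there is no in-paper argument to compare against. Rees's own route goes through the ergodic theory of measured foliations and is quite different from what you sketch. Your proposal instead follows the train-track route (Thurston's original picture, developed systematically in Penner--Harer~\cite{PH92}): cover $\ML(\Sigma)$ by finitely many cones $V(\tau_k)$ of maximal birecurrent train tracks, put each pair $\tau_i$, $\tau_j$ in efficient position, write down the bilinear crossing formula on $V(\tau_i)\times V(\tau_j)$, check agreement with geometric intersection number on the dense set of weighted multicurves (Theorem~\ref{thm:multicurves in ML}), and glue. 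This is a legitimate, well-known alternative to the cited argument, and the surrounding bookkeeping (PL charts from Theorem~\ref{thm:PL structure on ML}, density, gluing, $\MCG(\Sigma)$-equivariance by naturality of carrying) is all fine.

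However, the one step you flag as ``where the real work lies'' is a genuine gap, and it is essentially the whole content of the theorem. The crux is the claim that bigon-free position of the tracks $\tau_i\cup\tau_j$ forces minimal position for \emph{every} pair of carried multicurves, so that $\sum_{a,b} n_{ab}\,w_a v_b$ really computes $i(\mu,\nu)$. This is not automatic: a bigon between two carried curves need not project to a bigon of $\tau_i\cup\tau_j$ without an argument that it does after collapsing tie neighbourhoods. The cleanest stress test is a weighted multicurve $\gamma$ lying in the overlap $V(\tau_i)\cap V(\tau_j)$: one has $i(\gamma,\gamma)=0$, yet a priori the branches of $\tau_i$ carrying $\gamma$ may cross the branches of $\tau_j$ carrying $\gamma$, so one must show those crossings produce no net contribution. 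Proving this --- and that efficient position can always be arranged for a covering family of maximal train tracks, after subdivision --- is precisely what makes the local formula well-defined and chart-independent, and hence is where the proof of continuity actually lives. As written, your argument asserts the conclusion (``efficient position forces minimal position'') rather than proving it, so the proposal is a correct outline of a different route from the one the paper cites, with the decisive lemma left unestablished.
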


\section{The Goeritz groups of keen Heegaard splittings}
\label{sec:The Goeritz groups of keen Heegaard splittings}

In this section, we discuss the finiteness of the Goeritz groups 
of keen Heegaard splittings. 

\begin{proposition}
\label{thm:The Goeritz groups of keen Heegaard splittings}
Let $M = H_1 \cup_\Sigma H_2$ be a Heegaard splitting of genus at least $2$. 
\begin{enumerate}
\item
If  the splitting $M = H_1 \cup_\Sigma H_2$ is strongly keen and the distance $d(H_1, H_2)$ is $2$, the Goeritz group 
$\MCG_+ (M, H_1)$ is either a finite cyclic group or a finite dihedral group. 
\item
If  the splitting $M = H_1 \cup_\Sigma H_2$ is keen and the distance $d(H_1, H_2)$ is at least $3$, the Goeritz group 
$\MCG_+ (M, H_1)$ is either a finite cyclic group or a finite dihedral group. 
\item
If the splitting $M = H_1 \cup_\Sigma H_2$ is weakly keen and the distance 
$d (H_1, H_2)$ is at least $3$, the Goeritz group 
$\MCG_+ (M, H_1)$ is a finite group. 
\end{enumerate}
\end{proposition}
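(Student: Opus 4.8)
The plan is to exploit the ``rigidity'' that keenness imposes on the action of the Goeritz group on the curve graph, and to deduce finiteness of $\MCG_+(M,H_1)$ from finiteness of the stabilizer of a suitable finite configuration of curves. For each of the three statements, the first step is the same: any element $\varphi \in \MCG_+(M,H_1)$ permutes the set $\mathcal{D}(H_1)$ and the set $\mathcal{D}(H_2)$, hence it permutes the set $\mathcal{G}$ of geodesic segments in $\mathcal{C}(\Sigma)$ of length $d := d(H_1,H_2)$ with one endpoint in $\mathcal{D}(H_1)$ and the other in $\mathcal{D}(H_2)$, and it permutes the set $\mathcal{P}$ of pairs of endpoints of such geodesics. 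For the weakly keen case (3) the set $\mathcal{P}$ is finite and nonempty; for the keen cases (1) and (2) it is a single pair $\{\alpha,\alpha'\}$; for the strongly keen case (1), even the geodesic segment itself is unique, so $\mathcal{G}$ is a singleton.

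Next I would set up the homomorphism $\rho : \MCG_+(M,H_1) \to \mathrm{Sym}(\mathcal{P})$ recording the induced permutation of the finite set $\mathcal{P}$, and analyse its kernel and image separately. For the image: in case (1) a single pair $\{\alpha,\alpha'\}$ is preserved, and because $\varphi$ must preserve the labelling of which curve bounds in $H_1$ unless it swaps $H_1$ and $H_2$ --- but the Goeritz group by definition preserves each handlebody --- the image of $\rho$ actually fixes $\alpha$ and $\alpha'$ individually; in case (2) the same holds. Thus in (1) and (2) the image is trivial, while in (3) the image is a subgroup of the finite group $\mathrm{Sym}(\mathcal{P})$. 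For the kernel: an element fixing $\alpha$ and $\alpha'$ (say the endpoints of a fixed geodesic, which we may take to be the unique one in the strongly keen case) restricts to an automorphism of $\Sigma$ fixing these two curves, and hence of the complexity-reduced surface obtained by cutting along them; the key point, using that $d(\alpha,\alpha') = d \ge 2$ so that $\alpha$ and $\alpha'$ fill $\Sigma$, is that $\alpha \cup \alpha'$ is a filling pair, so its stabilizer in $\MCG_+(\Sigma)$ is finite. For the cyclic-or-dihedral refinement in (1) and (2): the stabilizer of a filling pair of curves acts faithfully on the cell structure $\alpha$ and $\alpha'$ cut out on $\Sigma$, and because the two curves are ordered (one is a meridian of $H_1$, the other of $H_2$) this stabilizer embeds in the symmetry group of a ``$\theta$-graph-like'' spine, which is cyclic or dihedral; in the strongly keen subcase of (1) one instead uses that the whole geodesic is preserved, giving an action on an ordered chain of curves, again cyclic or dihedral.

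Assembling: in case (3), $\MCG_+(M,H_1)$ sits in an extension $1 \to K \to \MCG_+(M,H_1) \to \mathrm{im}\,\rho \to 1$ with $K$ finite (a subgroup of the finite stabilizer of a filling pair) and $\mathrm{im}\,\rho$ finite, hence $\MCG_+(M,H_1)$ is finite. In cases (1) and (2) the image is trivial, so $\MCG_+(M,H_1) = K$ is a subgroup of the cyclic-or-dihedral group described above; a subgroup of a finite cyclic group is finite cyclic, and a subgroup of a finite dihedral group is finite cyclic or finite dihedral, so the conclusion follows. The main obstacle I anticipate is establishing \emph{finiteness} of the stabilizer in $\MCG_+(\Sigma)$ of the filling pair $\alpha \cup \alpha'$ --- more precisely, showing that an automorphism of $\Sigma$ that fixes the isotopy classes of both $\alpha$ and $\alpha'$ (and which is \emph{a priori} only known to permute disk sets, not to be extendable in any stronger sense) is determined up to finitely many choices; this requires pinning down the pieces of $\Sigma \setminus (\alpha \cup \alpha')$ and is where the filling hypothesis $d \ge 2$ is essential. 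The secondary subtlety is the cyclic-versus-dihedral dichotomy in (1) and (2), which is where one must carefully track orientations and the ordering of the curves; this is where the ``strongly keen'' hypothesis does extra work in case (1) with $d = 2$.
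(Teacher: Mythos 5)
Your strategy for cases~(2) and~(3) is sound, and your argument for~(3)—bounding $\MCG_+(M,H_1)$ by an extension of a finite permutation group by a finite stabilizer—is a valid, arguably cleaner alternative to the paper's route through Serre's theorem that torsion subgroups of $\MCG_+(\Sigma)$ are finite. But there is a genuine gap in case~(1), and it is exactly where you flag your own anxiety.

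You assert that ``$d(\alpha,\alpha')\ge 2$ so that $\alpha$ and $\alpha'$ fill $\Sigma$.'' This is false: two curves fill $\Sigma$ if and only if their distance is at least $3$. At $d=2$ there is, by definition, a curve disjoint from both $\alpha$ and $\alpha'$—namely the middle vertex $\alpha_1$ of the geodesic—so $\Cl(\Sigma-\Nbd(\alpha\cup\alpha'))$ contains at least one annular piece with core $\alpha_1$. Consequently the stabilizer of $\{\alpha,\alpha'\}$ in $\MCG_+(\Sigma)$ is \emph{not} finite; it contains the infinite cyclic group generated by $\tau_{\alpha_1}$, which fixes the isotopy classes of $\alpha$, $\alpha'$, and indeed of the whole geodesic chain $(\alpha_0,\alpha_1,\alpha_2)$. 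So your fallback of using the preserved geodesic chain does not rescue the argument either: the chain has the same infinite pointwise stabilizer. What is missing is a reason why $\tau_{\alpha_1}^n$ ($n\neq 0$) cannot lie in $\MCG_+(M,H_1)$. The paper supplies it: if $\tau_{\alpha_1}^n$ extended over both $H_1$ and $H_2$, then by Oertel or McCullough $\alpha_1$ would bound disks in both handlebodies, forcing $d(H_1,H_2)=0$, a contradiction. The strongly keen hypothesis is also used here in a way your sketch doesn't capture: it guarantees that the complement of $\alpha_0\cup\alpha_2$ has \emph{exactly one} annular piece (a second annulus would yield a second geodesic), so after normalizing $\psi$ to be the identity on $\alpha_0\cup\alpha_2$ and applying the Alexander trick on the disk pieces, one lands precisely on a power of $\tau_{\alpha_1}$ and nothing worse. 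Without this, the kernel analysis in case~(1) does not close.

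A smaller remark: the ``$\theta$-graph-like spine'' justification for the cyclic-or-dihedral dichotomy is vague. The clean version, as in the paper, is that the stabilizer acts faithfully on the circle $\alpha_0$ preserving the finite set $\alpha_0\cap\alpha_2$ (or $\alpha_0\cap\alpha'$ in case~(2)), and such a group of symmetries of a finitely marked circle is cyclic or dihedral; faithfulness is where filling (case~(2)) or the Dehn-twist obstruction (case~(1)) enters.
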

\begin{proof}
\noindent (1) 
Suppose that  $M = H_1 \cup_\Sigma H_2$ is strongly keen and $d(H_1, H_2) = 2$. 
There exists a unique geodesic segment $(\alpha_0, \alpha_1, \alpha_2)$ 
such that $\alpha_0 \in \mathcal{D} (H_1)$ and $\alpha_2 \in \mathcal{D} (H_2)$. 
Let $\varphi \in \MCG_+ (M, H_1)$. 
By the uniqueness of the geodesic segment $(\alpha_0, \alpha_1, \alpha_2)$, 
we have $\varphi (\alpha_j) = \alpha_j$ for each $j \in \{ 0, 1 , 2 \}$. 
Thus the group $\MCG_+(M, H_1)$ acts on the pair $(\alpha_0, \alpha_0 \cap \alpha_2)$ in a natural way. 
It suffices to show that the action of $\MCG_+(M, H_1)$ on $(\alpha_0, \alpha_0 \cap \alpha_2)$ is faithful, which in turn implies 
that $\MCG_+(M, H_1)$ is either a finite cyclic group or a finite dihedral group. 
Let $\psi$ be an element of $\MCG_+ (M, H_1)$ that acts on 
$(\alpha_0, \alpha_0 \cap \alpha_2)$ trivially. 
Since $\psi$ is orientation-preserving, $\psi$ preserves an orientation of 
$\alpha_2$. 
Therefore, we can assume that 
$\psi | _{\alpha_0 \cup \alpha_2}$ 
is the identity on $\alpha_0 \cup \alpha_2$.
Since the Heegaard splitting $M = H_1 \cup_\Sigma H_2$ is strongly keen and $d (H_1, H_2) = 2$, 
$\Cl (\Sigma - \Nbd (\alpha_0 \cup \alpha_2))$ consists of 
finitely many disks and a single annulus, and $\alpha_1$ is the core of that  annulus. 
By the Alexander trick, we can assume that $\psi$ is the identity outside of 
the annulus $\Nbd (\alpha_1)$. 
Thus $\psi$ is a power $\tau_{\alpha_1}^n$ of the Dehn twist $\tau_{\alpha_1}$. 
If $n \neq 0$, the circle $\alpha_1$ bounds disks both in $H_1$ and $H_2$ 
due to Oertel \cite{Oer02} or McCullough \cite{McC06}, which is a contradiction. 
Therefore, $\psi$ is the identity.

\noindent (2)
Suppose that $M = H_1 \cup_\Sigma H_2$ is keen and $d(H_1, H_2) \geq 3$. 
This case is easier than (1). 
Since $M = H_1 \cup_\Sigma H_2$ is keen, 
there exists a unique pair of $\alpha \in \mathcal{D} (H_1)$ and 
$\alpha' \in \mathcal{D} (H_2)$ satisfying  
$d_{\mathcal{C} (\Sigma)} (\alpha, \alpha') = d (H_1, H_2)$. 
Thus any $\varphi \in \MCG_+ (M , H_1)$ preserves 
both $\alpha$ and $\alpha'$. 
Since $d (H_1, H_2) \geq 3$, $\Cl (\Sigma \setminus \Nbd (\alpha \cup \alpha'))$ 
consists only of disks. 
Thus, the same argument as in the proof of (1) shows that 
$\MCG_+ (M, H_1)$ is either a finite cyclic group or a finite dihedral group.  

\noindent (3)
Suppose that $M = H_1 \cup_\Sigma H_2$ is weakly keen and $d(H_1, H_2) \geq 3$.  
In this case,  
we can show that the order of any $\varphi \in \MCG_+ (M, H_1)$ is finite as follows.  
Let $\varphi \in \MCG_+ (M, H_1)$. 
Choose $\alpha \in \mathcal{D} (H_1)$ and $\alpha' \in \mathcal{D} (H_2)$ 
such that $d_{\mathcal{C} (\Sigma)} (\alpha, \alpha') = d (H_1, H_2)$. 
Since the Heegaard splitting $M = H_1 \cup_\Sigma H_2$ is weakly keen, 
there exists an integer $n$ such that $\varphi^n (\alpha) = \alpha$ and 
$\varphi^n (\alpha') = \alpha'$. 
Since $d (H_1, H_2) \geq 3$, 
$\Cl (\Sigma - \Nbd (\alpha \cup \alpha'))$ consists only of disks. 
Thus, the same argument 
as above shows that the order of the element $\varphi^n$ is finite in 
$\MCG_+ (M , H_1)$. 
Due to Serre \cite{Ser60}, any torsion subgroup of $\MCG_+ (\Sigma)$ is a finite group. 
The above argument therefore immediately 
implies that the Goeritz group $\MCG_+ (M, H_1)$ is a finite group. 
\end{proof}

As a direct corollary of Proposition~\ref{thm:The Goeritz groups of keen Heegaard splittings} and 
Theorem \ref{thm:IJK18}, we get the following: 

\begin{corollary}
\label{cor:direct corollary of IJK18}
For any $g \geq 3$ and $n \geq 2$, there exists a genus-$g$ Heegaard splitting 
$M = H_1 \cup_{\Sigma} H_2$ with $d (H_1, H_2) = n$ such that 
the Goeritz group $\MCG_+ (M, H_1)$ is either a finite cyclic group or a finite dihedral group. 
\end{corollary}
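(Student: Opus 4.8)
The plan is to simply combine the two earlier results: Theorem~\ref{thm:IJK18} provides, for any $g \geq 3$ and $n \geq 2$, a genus-$g$ \emph{strongly keen} Heegaard splitting $M = H_1 \cup_{\Sigma} H_2$ with $d(H_1, H_2) = n$; Proposition~\ref{thm:The Goeritz groups of keen Heegaard splittings} then controls its Goeritz group. So the proof is essentially a case split on the distance $n$.

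First I would fix $g \geq 3$ and $n \geq 2$ and apply Theorem~\ref{thm:IJK18} to obtain a genus-$g$ strongly keen Heegaard splitting $M = H_1 \cup_{\Sigma} H_2$ with $d(H_1, H_2) = n$. If $n = 2$, then by Proposition~\ref{thm:The Goeritz groups of keen Heegaard splittings}(1) the Goeritz group $\MCG_+(M, H_1)$ is either a finite cyclic group or a finite dihedral group, which is exactly the claimed conclusion. If $n \geq 3$, I would observe that a strongly keen Heegaard splitting is in particular keen (uniqueness of the geodesic segment forces uniqueness of its endpoints, the pair $\alpha \in \mathcal{D}(H_1)$, $\alpha' \in \mathcal{D}(H_2)$), so Proposition~\ref{thm:The Goeritz groups of keen Heegaard splittings}(2) applies and again gives that $\MCG_+(M, H_1)$ is a finite cyclic or finite dihedral group. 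In either case the desired Heegaard splitting exists.

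There is no real obstacle here; the statement is explicitly advertised in the introduction as ``a direct corollary,'' and the only thing to verify carefully is the implication strongly keen $\Rightarrow$ keen so that part~(2) of the Proposition is legitimately invoked in the range $n \geq 3$. This is immediate from the definitions in Section~\ref{subsec:Distance of a Heegaard splitting}: if $(\alpha_0, \ldots, \alpha_n)$ is the unique geodesic realizing $d(H_1, H_2)$ with $\alpha_0 \in \mathcal{D}(H_1)$ and $\alpha_n \in \mathcal{D}(H_2)$, then any other realizing pair would be the endpoints of some realizing geodesic, contradicting uniqueness.

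\begin{proof}
Fix $g \geq 3$ and $n \geq 2$. By Theorem~\ref{thm:IJK18}, there exists a genus-$g$ strongly keen Heegaard splitting $M = H_1 \cup_{\Sigma} H_2$ with $d(H_1, H_2) = n$. If $n = 2$, then Proposition~\ref{thm:The Goeritz groups of keen Heegaard splittings}(1) shows that $\MCG_+(M, H_1)$ is either a finite cyclic group or a finite dihedral group. If $n \geq 3$, note that a strongly keen Heegaard splitting is keen: the endpoints of the unique geodesic segment realizing $d(H_1, H_2)$ form the unique pair $\alpha \in \mathcal{D}(H_1)$, $\alpha' \in \mathcal{D}(H_2)$ with $d_{\mathcal{C}(\Sigma)}(\alpha, \alpha') = d(H_1, H_2)$, since any such pair is the pair of endpoints of some realizing geodesic. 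Hence Proposition~\ref{thm:The Goeritz groups of keen Heegaard splittings}(2) applies and $\MCG_+(M, H_1)$ is again either a finite cyclic group or a finite dihedral group.
\end{proof}
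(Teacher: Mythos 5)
Your proof is correct and takes essentially the same approach as the paper, which simply states the result as a direct corollary of Proposition~\ref{thm:The Goeritz groups of keen Heegaard splittings} and Theorem~\ref{thm:IJK18}. Your case split on $n=2$ versus $n\geq 3$ and the observation that strongly keen implies keen (needed so that part (2) of the proposition applies) just make explicit what the paper treats as immediate.
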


\section{Handlebodies as interval bundles}
\label{sec:Handlebodies as interval bundles}

Let $F$ be a compact (possibly non-orientable) surface with non-empty boundary. 
Let $\pi: H \to F$ be the orientable $I$-bundle. 
Note that $H$ is a handlebody and $\pi^{-1}(\partial F)$ consists of annuli on $\partial H$. 
We call the union of the core curves of $\pi^{-1}(\partial F)$ the {\it binding} of this $I$-bundle. 
In this paper, we often identify $F$ with the image $F \times \{ 1/2 \}$ of a section 
of the $I$-bundle $H \to F$, and under this identification, we regard that 
$b = \partial F$. 
The union of disjoint simple closed curves on the boundary $\partial H$ of a handlebody $H$ 
is called a {\it binding} of $H$ if it is the binding of an $I$-bundle structure $H \to F$. 

In the following, let $H$ be a handlebody of genus $g$, where $g \geq 2$. 

\begin{lemma}
\label{lem:distance of the binding}
If a simple closed curve $b$ on $\partial H$ is a binding,  
we have $d_{\mathcal{C} (\Sigma)} (b , \mathcal{D} (H)) = 2$. 
\end{lemma}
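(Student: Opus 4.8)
The plan is to prove the two inequalities $d_{\mathcal{C}(\Sigma)}(b,\mathcal{D}(H)) \leq 2$ and $d_{\mathcal{C}(\Sigma)}(b,\mathcal{D}(H)) \geq 2$ separately, where $\Sigma = \partial H$. For the upper bound, I would use the $I$-bundle structure $\pi \colon H \to F$ directly. Since $F$ has non-empty boundary, it contains an essential arc $a$; the preimage $\pi^{-1}(a)$ is a properly embedded disk (or band) in $H$, and its boundary $\partial \pi^{-1}(a)$ is an essential simple closed curve on $\Sigma$ bounding a disk in $H$, hence an element of $\mathcal{D}(H)$. Choosing $a$ so that it meets $\partial F$ in exactly its two endpoints, the curve $\partial \pi^{-1}(a)$ crosses the binding $b = \pi^{-1}(\partial F)$; but one can find a curve $c$ on $\Sigma$ disjoint from both $b$ and $\partial\pi^{-1}(a)$ — for instance, a curve lying in the interior of one of the annuli $\pi^{-1}(\partial F)$ that is pushed off, or the boundary of a vertical annulus over an arc disjoint from $a$. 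This gives a path $b,c,\partial\pi^{-1}(a)$ of length $2$ in $\mathcal{C}(\Sigma)$ from $b$ to a meridian of $H$, so $d_{\mathcal{C}(\Sigma)}(b,\mathcal{D}(H)) \leq 2$.

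For the lower bound $d_{\mathcal{C}(\Sigma)}(b,\mathcal{D}(H)) \geq 2$, I must show first that $b \notin \mathcal{D}(H)$ (distance $\neq 0$) and second that $b$ is not disjoint from any meridian of $H$ (distance $\neq 1$). The first point is immediate: the binding is the union of core curves of the annuli $\pi^{-1}(\partial F)$, and since $F$ has negative Euler characteristic (equivalently $g \geq 2$ and $F \neq$ disk, Möbius band, annulus), no component of $\partial F$ bounds a disk in $F$, so $b$ does not bound a disk in $H$; alternatively, $b$ is isotopic in $H$ onto $\partial F \subset F$, which carries the $\pi_1$-injective surface $F$, so $b$ is non-trivial in $\pi_1(H)$. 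The second point is the crux: suppose some $d \in \mathcal{D}(H)$ is disjoint from $b$. Then $d$ lies in $\Sigma_b = \Cl(\Sigma - \Nbd(b)) = \partial_h H$, the horizontal boundary of the $I$-bundle, which is the orientation double cover (or two copies, if $F$ is orientable) of $F$ via $\pi$. A meridian disk $D$ of $H$ with $\partial D = d$ disjoint from $b$ can be isotoped to be vertical or to project nicely; the key is that $\pi(D)$ would then be an essential arc or curve system in $F$ giving an essential annulus or compressing structure, contradicting the incompressibility of $\partial_h H$ in an $I$-bundle over a surface of negative Euler characteristic.

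The main obstacle I anticipate is making the distance-$1$ exclusion rigorous: one needs to rule out a meridian disk $D$ of $H$ with $\partial D \subset \Sigma_b$. The clean way is to recall that the horizontal boundary $\partial_h H$ of an $I$-bundle over a surface with $\chi < 0$ is incompressible in $H$ (this is standard — it follows because $\partial_h H \hookrightarrow H$ is $\pi_1$-injective, being a covering of $F$ composed with the homotopy equivalence $H \simeq F$). Since $\partial D$ is essential on $\Sigma$ and lies in $\partial_h H$, it is essential in $\partial_h H$ as well (a curve essential in $\Sigma$ and contained in a subsurface obtained by removing annular neighborhoods of disjoint curves is essential in that subsurface), so it cannot bound a disk in $H$ — contradiction. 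Hence no meridian is disjoint from $b$, giving $d_{\mathcal{C}(\Sigma)}(b,\mathcal{D}(H)) \geq 2$. Combining with the upper bound yields the claimed equality. I would be careful about the hypothesis needed on $F$: the statement assumes $g \geq 2$, which forces $\chi(F) < 0$ except that $F$ could a priori be an annulus (giving $g=1$) — since $g \geq 2$ this is excluded, and the argument goes through.
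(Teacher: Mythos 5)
Your overall approach matches the paper's: for the lower bound you invoke incompressibility of the horizontal boundary $\partial_h H = \Cl(\partial H - \Nbd(b))$, and for the upper bound you produce a vertical disk together with a middle curve disjoint from both $b$ and the disk boundary. One small wobble: your two candidate middle curves are not quite right as stated. A pushed-off copy of $b$ in the annulus $\pi^{-1}(\partial F)$ still meets $\partial\pi^{-1}(a)$ (which crosses $b$), and ``the boundary of a vertical annulus over an arc disjoint from $a$'' conflates arcs (whose vertical preimages are disks) with closed curves. The clean choice, and the one the paper makes, is to take an essential simple closed curve $\beta$ on $F$ disjoint from the arc $\alpha$; then $A := \pi^{-1}(\beta)$ is a vertical annulus or M\"obius band, and a component of $\partial A$ is an essential curve on $\Sigma$ disjoint from both $b$ and $\partial D$. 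With that correction, your argument is the paper's argument.
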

\begin{proof}
Since $b$ is connected and $\partial H - b$ is incompressible in $H$, 
the distance $d_{\mathcal{C}(\Sigma)} (b, \mathcal{D} (H))$ is 
at least $2$. 
Let $\pi : H \to F$ be the $I$-bundle such that $b$ is its binding. 
Let $\alpha$ be an essential arc on $F$. 
Then $D := \pi^{-1} (\alpha)$ is an essential disk in $H$. 
Since the Euler characteristic of $F$ is negative, there exists a null-homotopic simple closed curve 
$\beta$ on $F$ disjoint from $\alpha$. 
Then $A := \pi^{-1} (\beta)$ is an annulus or a M\"{o}bius band in $H$ that satisfies 
$\partial D \cap \partial A = \emptyset$ and $\partial A \cap \partial b = \emptyset$. 
Thus we have $d_{\mathcal{C}(\Sigma)} (b, \mathcal{D} (H)) = 2$. 
See Figure~\ref{fig:binding}. 
\begin{figure}[htbp]
\begin{center}
\includegraphics[width=6.5cm,clip]{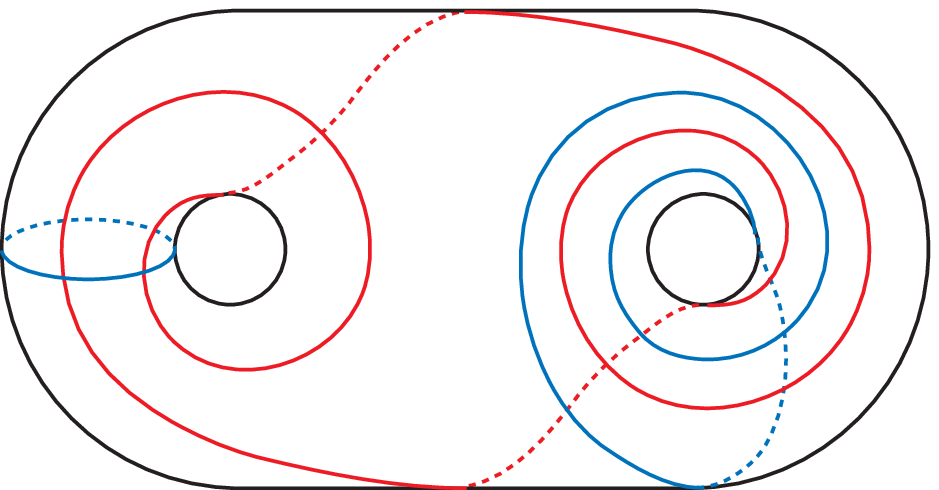}
\begin{picture}(400,0)(0,0)
\put(185,56){\color{red}$b$}
\put(88,56){\color{blue}$\partial D$}
\end{picture}
\caption{This figure depicts the case where the genus of $H$ is two and 
$F$ is non-orientable. 
The distance between $\partial D$ and $b$ in $\mathcal{C} (\Sigma)$ is two.}
\label{fig:binding}
\end{center}
\end{figure}
\end{proof}

The set of $3g - 3$ mutually disjoint, mutually non-isotopic, essential disks in $H$ is called 
a {\it solid pants decomposition} of $H$. 
Let $\mathscr{S} = \{ D_1, \ldots, D_{3g-3} \}$ be a solid pants decomposition of $H$. 
An essential arc $\alpha$ on a component $P$ of 
$\Cl (\partial H - \Nbd ( \bigcup_{i=1}^{3g-3} \partial D_i))$ is called a {\it wave} (resp. {\it seam}) 
{\it for $\mathscr{S}$} if it is a wave (resp. seam) for the pants decomposition 
$\mathscr{P} = \{ \partial D_1, \ldots, \partial D_{3g-3} \}$ of the surface $\partial H$. 
An essential simple closed curve $\beta$ on $\partial H$ is said to be {\it $k$-seamed with respect to 
$\mathscr{S}$} if $\beta$ is $k$-seamed with respect to the pants decomposition $\mathscr{P}$ of $\partial H$. 

The proof of the following lemma is straightforward. 

\begin{lemma}
\label{lem:solid pants decomposition and waves}
Let $\mathscr{S}$ be a solid pants decomposition of $H$. 
Then the boundary of each essential disk $D$ in $H$ with $D \notin \mathscr{S}$ 
contains at least two waves for $\mathscr{S}$. 
\end{lemma}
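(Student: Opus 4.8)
The plan is to cut $H$ open along $\mathscr{S}$ and run an outermost–subdisk argument on $D$. Write $C = \partial D_1 \cup \cdots \cup \partial D_{3g-3} \subset \partial H$, and let $V_1, \ldots, V_{2g-2}$ be the components of $H$ cut along $D_1 \cup \cdots \cup D_{3g-3}$; each $V_k$ is a $3$-ball, and $P_k := V_k \cap \partial H$ is one of the $2g-2$ pairs of pants into which $C$ cuts $\partial H$, its boundary circles being copies of certain $\partial D_i$. Put $\partial D$ in minimal position with respect to $C$ and make $D$ transverse to $D_1 \cup \cdots \cup D_{3g-3}$.

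First I would observe that $\partial D \cap C \neq \emptyset$. Otherwise $\partial D$ lies in a single $P_k$, hence, being essential, is isotopic in $\partial H$ to a boundary component of $P_k$ and therefore to some $\partial D_i$; since $H$ is irreducible, $D$ would then be isotopic to $D_i$, contradicting $D \notin \mathscr{S}$. Next, by the usual innermost–disk surgery on $D$ (which fixes $\partial D$, and so does not affect the statement to be proved) I may assume $D \cap (D_1 \cup \cdots \cup D_{3g-3})$ contains no closed curve; thus it is a family of $n \geq 1$ pairwise disjoint arcs, properly embedded in the disk $D$ with endpoints on the points of $\partial D \cap C$. By minimality of $\partial D \cap C$, every arc of $\partial D$ contained in a pair of pants $P_k$ is essential in $P_k$. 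These $n$ arcs cut $D$ into $n+1$ subdisks whose adjacency graph (vertices $=$ subdisks, edges $=$ cutting arcs) is a tree.

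The key point is that each leaf of this tree produces a wave. A leaf corresponds to a subdisk $D' \subset D$ meeting $D_1 \cup \cdots \cup D_{3g-3}$ in a single arc $\delta$, say $\delta \subset D_i$; then $\partial D' = \delta \cup \alpha$, where $\alpha$ is a single arc of $\partial D$ lying in some $P_k$. Since $\delta$ is an arc in the disk $D_i$, its two endpoints lie on the single circle $\partial D_i$, which is one boundary component of $P_k$; as $\partial \alpha = \partial \delta$, the arc $\alpha$ has both of its endpoints on the same component of $\partial P_k$. Being also essential in $P_k$, the arc $\alpha$ is a wave for $\mathscr{S}$.

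Finally, a tree with $n+1 \geq 2$ vertices has at least two leaves, so there are at least two subdisks $D'_1 \neq D'_2$ as above, giving arcs $\alpha_1, \alpha_2$ of $\partial D \cap (\partial H - \Nbd(C))$ that are waves. These arcs are distinct: in $D$, each arc of $\partial D$ is adjacent to exactly one of the complementary subdisks, so distinct subdisks carry disjoint sets of $\partial D$–arcs on their boundaries, whence $\alpha_1 \neq \alpha_2$. Therefore $\partial D$ contains at least two waves for $\mathscr{S}$. The only delicate point is the reduction to minimal position — removing closed intersection curves and ruling out inessential arcs in the pants — which is routine minimal–position bookkeeping and does not interact with the counting argument.
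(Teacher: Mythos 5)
Your proof is correct, and since the paper dismisses this lemma as ``straightforward'' without supplying an argument, yours fills in exactly the kind of detail the authors had in mind: put $\partial D$ in minimal position with $C$, remove circle components of $D \cap (D_1 \cup \cdots \cup D_{3g-3})$ by innermost-disk surgery, and then extract at least two outermost arcs via the tree structure on the complementary subdisks of $D$; each outermost subdisk yields a wave. The one place worth a slight tightening is the sentence ``its two endpoints lie on the single circle $\partial D_i$, which is one boundary component of $P_k$'': if $D_i$ is non-separating, $\partial D_i$ gives rise to \emph{two} boundary circles after cutting $\partial H$ along $C$, possibly both on the same pair of pants, so what matters is that $\delta$ lies on the single copy of $D_i$ in $\partial V_k$ that bounds $V_k$ on the side containing $D'$, and hence both endpoints of $\alpha$ land on the single boundary circle of $P_k$ corresponding to that copy. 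With that wording nailed down, the argument is complete; nothing else is missing, and the conclusion that the two leaves produce two distinct waves on $\partial D$ is justified exactly as you state.
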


\begin{lemma}
\label{lem:the binding is 1-seamed}
Each binding $b$ of $H$ admits 
a solid pants decomposition $\mathscr{S}$ of $H$ such that 
$b$ is $1$-seamed with respect to $\mathscr{S}$. 
\end{lemma}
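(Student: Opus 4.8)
The plan is to construct the solid pants decomposition directly from an $I$-bundle structure realizing $b$ as its binding, by pulling back a suitable arc-and-curve system on $F$. Let $\pi : H \to F$ be the orientable $I$-bundle with binding $b = \partial F$, where $F$ is a compact surface with $\chi(F) < 0$ and a single boundary component (the case relevant to the paper; the general case with several boundary components is handled similarly). A solid pants decomposition $\mathscr{S}$ of $H$ will be obtained as $\{\pi^{-1}(\alpha_i)\}_i$ for a suitable collection of disjoint essential arcs $\alpha_i$ on $F$: since $F$ has a single boundary component, cutting $F$ along a maximal system of disjoint, pairwise non-isotopic essential arcs yields a union of disks, so the preimages $\pi^{-1}(\alpha_i)$ are essential disks cutting $H$ into balls, and counting gives exactly $3g-3$ of them (this is standard; the number of arcs in such a maximal system on a surface with $\chi = 2-2g$ — orientable or not — and one boundary component is $6g-6$ when $F$ is... one must be careful here and instead just check that $\pi^{-1}$ of a maximal arc system cutting $F$ into a single disk has the right count via an Euler characteristic computation on $\partial H$).

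First I would fix such a maximal arc system $\{\alpha_i\}$ on $F$ and set $D_i := \pi^{-1}(\alpha_i)$, $\mathscr{S} := \{D_i\}$. The pants curves are then $\partial D_i = \pi^{-1}(\partial \alpha_i) \cup (\text{two copies of } \alpha_i \text{ on the two sections})$; more usefully, on $\Sigma = \partial H$ the curve $\partial D_i$ is the double of $\alpha_i$ along $b$. The key structural observation is that each pair of pants $P$ in $\Cl(\Sigma - \Nbd(\bigcup \partial D_i))$ is an $I$-bundle (over an interval, i.e. a square, or possibly over a Möbius-like piece) sitting over a disk component $F_0$ of $\Cl(F - \Nbd(\bigcup \alpha_i))$: concretely, $\Nbd(b) \cap P$ is a rectangle coming from the binding, and $P$ meets the two horizontal boundary sections of $H$ in two further pieces. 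I would then identify, for each such $P$, the three boundary components of $P$ — some come from the $\partial D_i$ on the ``top'' section, some from the ``bottom'' section, and the arrangement forced by the $I$-bundle structure — and exhibit explicitly an essential arc $\gamma_P$ on $P$ joining two distinct boundary components of $P$, namely an arc of the form $\pi^{-1}(\text{arc in } F_0) \cap P$ that runs from the top section to the bottom section crossing the binding rectangle once. Such an arc is by construction a seam for $\mathscr{S}$, so $\mathscr{S}$ admits at least one seam in every pants, which is exactly the statement that $b$ is $1$-seamed with respect to $\mathscr{S}$ — provided one checks that this seam actually appears among the arcs $\beta \cap P$ for the relevant intersection pattern, i.e. that the ``$1$-seamed'' condition as defined (at least one arc of each of the three seam isotopy classes? — re-reading the definition, $k$-seamed requires $k$ arcs of \emph{each} of the three seam classes, so for $k=1$ I need all three seam classes represented).

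So the heart of the argument, and the main obstacle, is verifying that in each pair of pants $P$ coming from the $I$-bundle all three isotopy classes of seams are realized — not merely one. I would handle this by a local model: near a disk component $F_0$ of $F$ cut along the arcs, $\pi^{-1}(F_0)$ is a $3$-ball whose boundary $2$-sphere decomposes into the pair of pants $P$ together with copies of the $D_i$, and the three boundary circles of $P$ correspond to the three ``sides'' of the triangle $F_0$ (a disk bounded by arcs of $\partial \alpha_i$ and arcs of $b$); the three seam classes in $P$ are then in bijection with the three ways of connecting two of these sides, and I can write each one down as the lift of a corresponding arc in $F_0$. The bookkeeping to make sure that when two of the boundary arcs of $F_0$ are both subarcs of $b$ (which can happen, and forces two of $P$'s boundary circles to be ``doubled'' in a particular way) one still gets genuinely distinct, essential seam classes is the delicate point; I expect this to reduce to a short case check on how many sides of the polygon $F_0$ lie on $b$ versus on the $\alpha_i$, using that $F$ has negative Euler characteristic (so $F_0$ is not the whole picture and the arc system is genuinely nontrivial). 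Once that local verification is in place, the conclusion that $b$ is $1$-seamed with respect to $\mathscr{S}$ is immediate from the definitions, completing the proof.
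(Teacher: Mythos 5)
Your construction coincides with the paper's: pull back a maximal arc system $\{\alpha_i\}$ on $F$ to obtain the solid pants decomposition $\mathscr{S} = \{\pi^{-1}(\alpha_i)\}$. The paper states this in one line and leaves the $1$-seamed verification unstated, so you are right to try to verify it, and your picture of each pair of pants $P$ as sitting over a complementary hexagon $F_0$ of $F - \bigcup \alpha_i$ (two horizontal copies $F_0^{\pm}$ joined by three rectangles from $\Nbd(b)$) is the correct one. The arc-count worry also resolves itself: a maximal disjoint, pairwise non-isotopic, essential arc system on a surface with one boundary component and $\chi(F) = 1-g$ is an ideal triangulation, with $-3\chi(F) = 3g-3$ arcs cutting $F$ into hexagons each having three sides on $\partial F$ and three on the arcs.

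The verification itself, however, is not completed and contains a wrong turn. The arc $\gamma_P$ you propose, ``of the form $\pi^{-1}(\text{arc in } F_0)\cap P$, running from the top section to the bottom section,'' is \emph{not} an arc of $b$, so exhibiting it as a seam of $P$ does not address the $1$-seamed condition, which is a condition on the arcs of $b\cap P$; you flag this issue yourself (``provided one checks that this seam actually appears among the arcs $\beta\cap P$\,\dots'') but then leave it as ``a short case check.'' The needed argument is to examine $b\cap P$ directly, and it is indeed short. Since $\Nbd(b)\cap P$ is exactly the three rectangles $R_1,R_2,R_3$ over the three $b$-sides of the hexagon $F_0$, and $b$ is the core of $\Nbd(b)$, the intersection $b\cap P$ is precisely the three core arcs of $R_1,R_2,R_3$. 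Label the sides of $F_0$ cyclically $b_1,a_1,b_2,a_2,b_3,a_3$ with $b_k\subset\partial F$. The two $I$-fiber sides of $R_k$ lie on the two boundary circles of $P$ corresponding to the $a$-sides $a_{k-1}$ and $a_k$ adjacent to $b_k$, so the core of $R_k$ is a seam joining those two circles; as $k$ runs over $1,2,3$ the cyclically adjacent pairs $\{a_3,a_1\}$, $\{a_1,a_2\}$, $\{a_2,a_3\}$ are realized, which are exactly the three distinct seam classes. The case you worried about, where some $\alpha_i$ borders $F_0$ on both of its sides, is harmless: the three boundary circles of $P$ are distinct as circles on $P$ even if two of them are parallel in $\Sigma$, so the three seam classes remain distinct.
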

\begin{proof}
Let $\pi : H \to F$ be the $I$-bundle such that $b$ is its binding. 
Let $\{ \alpha_1 , \ldots , \alpha_n \}$ be a maximal collection of 
mutually disjoint, mutually non-isotopic, essential arcs on $F$. 
Then $\{ \pi^{-1}(\alpha_1), \ldots , \pi^{-1} (\alpha_n) \}$ forms the 
required solid pants decomposition of $H$. 
\end{proof}

\begin{lemma}
\label{lem:obstruction to be a binding}
Let $\beta$ be an essential simple closed curve on $\partial H$. 
If $\beta$ is $2$-seamed with respect to a solid pants decomposition 
$\mathscr{S}$ of $H$, then 
$\beta$ is not a binding of $H$. 
\end{lemma}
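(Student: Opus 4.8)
The plan is to argue by contradiction, deriving a numerical clash from the two hypotheses by estimating, for a single disk of $\mathscr{S}$, the geometric intersection number of its boundary with $\beta$ in two incompatible ways.

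Suppose $\beta$ is $2$-seamed with respect to a solid pants decomposition $\mathscr{S} = \{D_1, \ldots, D_{3g-3}\}$ of $H$ and is, at the same time, a binding of $H$. Let $\pi' : H \to F'$ be the orientable $I$-bundle whose binding is $\beta$, so that $\beta$ is the union of the core curves of the annuli $(\pi')^{-1}(\partial F')$. The first step is to make the disks of $\mathscr{S}$ vertical: by the classical classification of essential surfaces in an $I$-bundle, each $D_i$ is isotopic in $H$ to a vertical or a horizontal surface, and a horizontal disk is impossible since it would be a covering of $F'$, which has negative Euler characteristic. Hence each $D_i$ is isotopic in $H$ to $(\pi')^{-1}(\gamma_i)$ for an essential arc $\gamma_i$ in $F'$. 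The boundary of such a vertical disk meets $\beta$ only along the two vertical arcs lying over the endpoints of $\gamma_i$, each of which crosses $\beta$ exactly once; therefore $i(\beta, \partial D_i) \leq 2$ for every $i$.

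The second step is to read the opposite bound off the $2$-seamed hypothesis. We take $\beta$ in minimal position with respect to $C := \partial D_1 \cup \cdots \cup \partial D_{3g-3}$, which forces $\beta$ to be in minimal position with respect to each $\partial D_i$ individually. Fix $i$, and choose a pair of pants $P$ among the components of $\Cl (\partial H - \Nbd (C))$ together with a boundary circle $c$ of $P$ that is identified with $\partial D_i$. By hypothesis, each of the three isotopy classes of seams for $\mathscr{P} = \{\partial D_1, \ldots, \partial D_{3g-3}\}$ is represented by at least two arcs of $\beta \cap P$; a seam joins two distinct boundary circles of $P$, and exactly two of the three seam classes are incident to $c$, so $c$ carries the endpoints of at least four arcs of $\beta \cap P$. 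Consequently $\# (\beta \cap \partial D_i) \geq 4$, and by minimality $i(\beta, \partial D_i) \geq 4$, contradicting $i(\beta, \partial D_i) \leq 2$. Thus $\beta$ is not a binding of $H$.

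The one step that I expect to require care is the reduction to vertical disks: I would either cite the standard classification of essential surfaces in $I$-bundles, or give a short self-contained argument isotoping $D_i$ into fibered position (first general position with respect to the $I$-fibers, then innermost-disk surgeries to eliminate the closed components and the fiber-tangencies). The pants-and-seams count, and the passage from minimality with respect to $C$ to minimality with respect to each $\partial D_i$, are routine.
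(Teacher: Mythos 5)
Your Step 2 (the seam count giving $\#(\beta\cap\partial D_i)\geq 4$) is exactly the first half of the paper's argument, but your Step 1 does not hold, and this is a genuine gap. The Waldhausen-type classification of incompressible, $\partial$-incompressible surfaces in an $I$-bundle as vertical or horizontal is a statement about surfaces of nonpositive Euler characteristic (equivalently, surfaces that are essential with respect to the boundary pattern separating the horizontal from the vertical boundary); it does not assert that every essential disk in the handlebody $H$ is isotopic to a vertical disk $(\pi')^{-1}(\gamma)$. In fact this is false: for example, if $\beta$ is a binding whose base surface $F'$ is a once-punctured torus, then $H$ is a genus-$2$ handlebody, every essential arc in $F'$ is non-separating, so every vertical disk is non-separating --- yet $H$ contains essential separating disks, and one can take a solid pants decomposition $\mathscr{S}$ containing such a disk. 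More crudely, $F'$ has only finitely many isotopy classes of essential arcs while $H$ has infinitely many isotopy classes of essential disks, so most solid pants decompositions consist of non-vertical disks. Since $\mathscr{S}$ in the lemma is an arbitrary solid pants decomposition, your upper bound $i(\beta,\partial D_i)\leq 2$ is unavailable.

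The paper sidesteps this entirely by choosing the disk on the other side of the inequality: from the $2$-seamed hypothesis it deduces $i(\beta,\partial D)\geq 4$ for every essential disk $D$ (using Lemma~\ref{lem:solid pants decomposition and waves} to supply two waves on $\partial D$ when $D\notin\mathscr{S}$, each of which a seam arc of $\beta$ must cross), and then it observes that a binding always admits some essential disk $D=\pi'^{-1}(\alpha)$ with $i(\beta,\partial D)=2$ --- namely the obvious vertical disk over any essential arc $\alpha$ in $F'$. One only needs to exhibit one vertical disk, not to show the given disks are vertical. If you want to repair your argument along your original lines, you would have to first isotope $\mathscr{S}$ to a vertical solid pants decomposition, but that is precisely what cannot be done in general.
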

\begin{proof}
Suppose that $\beta$ is $2$-seamed with respect to a solid pants decomposition 
$\mathscr{S}$ of $H$. 
Let $D$ be an essential disk in $H$. 
If $D$ is a member of $\mathscr{S}$, we have 
\[ i (\beta, \partial D) \geq 4 ,   \]
where $i(\cdot , \cdot)$ is the geometric intersection number. 
Otherwise, by Lemma~\ref{lem:solid pants decomposition and waves}, 
$\partial D$ contains at least two waves $\alpha_1$, $\alpha_2$ with respect to 
$\mathscr{S}$. 
Thus, in this case, we have 
\[ i (\beta, \partial D) \geq \# (\beta \cap \alpha_1) + \# (\beta \cap \alpha_2) \geq 4. \]
See Figure~\ref{fig:pants}. 
\begin{figure}[htbp]
\begin{center}
\includegraphics[width=4cm,clip]{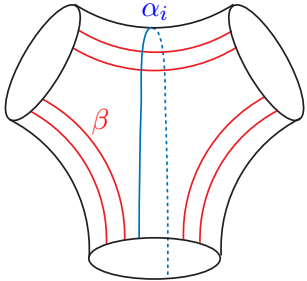}
\begin{picture}(400,0)(0,0)
\put(176,70){\color{red}$\beta$}
\put(195,113){\color{blue}$\alpha_i$}
\end{picture}
\caption{Each wave $\alpha_i$ intersects $\beta$ in at least two points.}
\label{fig:pants}
\end{center}
\end{figure}
Consequently, for any essential disk $D$ in $H$ we have $i (\beta, \partial D) > 2$.  

On the other hand, it is easily seen that for any binding $b$ of $H$, there exists an essential disk $D$ in $H$ with 
$i (b, \partial D) = 2$. 
This implies that $\beta$ is not a binding of $H$. 
\end{proof}

\section{Open and twisted book decompositions}
\label{sec:Open and  twisted book decompositions}

In this section, we consider two analogous structures on a 
closed orientable 3-manifold, {\it open} and {\it twisted book decompositions}. 
Both decompositions naturally induce Heegaard splittings, 
where each handlebody of the splittings inherits the structure of an $I$-bundle. 

Let $S$ be a compact orientable surface with non-empty boundary. 
Let $h$ be an orientation preserving automorphism of $S$ that fixes $\partial S$. 
Consider the mapping torus $S(h)$, which is 
the result of taking $S \times I$ and gluing $S \times \{1\}$ to $S \times \{0\}$ according to $h$. 
The boundary of $S(h)$ can naturally be identified with $\partial S \times S^1$. 
By shrinking each circle $\{x\} \times S^1$, where $x \in \partial S$, to a point, we obtain a 
closed orientable 3-manifold $M$. 
In this paper we shall call such a pair $(S, h)$ an {\it open book decomposition} of $M$. 
The image $b$ of $\partial S \times I$ under the quotient map $q: S \times I \to M$ 
forms a link in $M$. 
We call $b$ the {\it binding} of the open book decomposition $(S, h)$. 
The images $H_1$ and $H_2$ of $S \times [0, 1/2]$ and $S \times [1/2, 1]$, respectively, 
under the quotient map $q$ give a Heegaard splitting for $M$, that is, 
$H_1$ and $H_2$ are handlebodies in $M$ satisfying $H_1 \cup H_2 = M$ and 
$H_1 \cap H_2 = \partial H_1 = \partial H_2$. 
We call this one {\it the Heegaard splitting of $M$ induced 
from the open book decomposition $(S, h)$}. 
Note that the Heegaard surface of the splitting is homeomorphic to the double of $S$. 

Again, let $S$ be a compact orientable surface with non-empty boundary. 
Let $\iota_0$ and $\iota_1$ be orientation reversing, fixted-point-free involutions of $S$ satisfying   
$\iota_0 |_{\partial S} = \iota_1 |_{\partial S}$. 
Remark that, here, the number of the boundary components of $S$ must be even. 
Let $C_1, \ldots , C_{2n}$ be the boundary components of $\partial S$ such that 
$\iota_0 (C_i) = C_{i+n}$ (subscripts $\pmod n$). 
Consider the resulting space $S(\iota_0, \iota_1)$ 
of taking $S \times I$ and gluing $S \times \{0\}$ to itself according to $\iota_0$ and 
$S \times \{1\}$ to itself according to $\iota_1$. 
 The boundary of $S(\iota_0, \iota_1)$ consists of $n$ copies of the torus. 
 For each point $x$ in $\cup_{i=1}^n C_i$, 
 the image of the union $( \{ x \} \times [0,1] ) \cup ( \{ \iota_0(x) \} \times [0,1] ) $ 
 under the quotient map  $S \times [0,1] \to S (\iota_0 , \iota_1)$ is a circle on the boundary tori.  
By shrinking each such circle to a point, we obtain a 
closed orientable 3-manifold $M$. 
We call such a triple $(S, \iota_0, \iota_1)$ a {\it twisted book decomposition} of $M$. 
The image $b$ of $\partial S \times I$ under the quotient map $q: S \times I \to M$ 
forms a link in $M$. 
We call $b$ the {\it binding} of the twisted book decomposition $(S, \iota_0, \iota_1)$. 
The images $H_1$ and $H_2$ of $S \times [0, 1/2]$ and $S \times [1/2, 1]$, respectively, 
under the quotient map $q$ gives a Heegaard splitting for $M$. 
We call this one {\it the Heegaard splitting of $M$ induced from the twisted book decomposition $(S, \iota_0, \iota_1)$}. 
Since $\Sigma := q (S \times \{ 1/2 \})$ is the Heegaard surface of the splitting, 
the surface $\Sigma_b := \Cl (\Sigma - \Nbd (b)) $ is homeomorphic to $S$. 

Note that if $(S, h)$ is an open book decomposition of $M$ with the binding $b$, 
$\Cl (M - \Nbd (b))$ admits a natural foliation with all leaves (called {\it pages}) homeomorphic to $S$. 
Similarly, if $(S, \iota_0, \iota_1)$ is a twisted book decomposition of $M$ with the binding $b$, 
$\Cl (M - \Nbd (b))$ admits a natural foliation with all but two leaves (called {\it pages}) homeomorphic to $S$, 
where the two exceptional leaves are homeomorphic to the non-orientable surface 
$S / \iota_0$ $(\cong S/ \iota_1)$.

\begin{lemma}
\label{lem:constructing open and twisted books from I-bundles}
Let $F$ be a compact surface with non-empty boundary. 
Let $H \to F$ be the orientable $I$-bundle with the binding $b$. 
Let $H_1$ and $H_2$ be copies of $H$. 
Let $M$ be a closed orientable $3$-manifold obtained by gluing $H_1$ to $H_2$ according to  
an automorphism of $\partial H$ preserving $b$. 
Then we have the following: 
\begin{enumerate}
\item
If $F$ is orientable, the resulting Heegaard splitting $M = H_1 \cup H_2$ is induced from an open book decomposition 
where $b$ is the binding. 
\item
If $F$ is non-orientable, $M = H_1 \cup H_2$ is induced from a twisted book decomposition 
where $b$ is the binding. 
\end{enumerate}
\end{lemma}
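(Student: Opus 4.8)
The plan is to produce, from the gluing data, an explicit identification of $M = H_1 \cup H_2$ with a manifold built as in Section~\ref{sec:Open and  twisted book decompositions}. The starting observation is that the $I$-bundle $H \to F$ is cut along its binding $b$ into a product piece: write $\Sigma = \partial H$, $\Sigma_b = \Cl(\Sigma - \Nbd(b))$, and let $N(b) = \Nbd(b;M)$-type annuli on $\Sigma$. The complement $H \setminus \Nbd(b)$ is itself an $I$-bundle over $F$ minus a collar of $\partial F$, and since that subsurface is again $F$ up to homeomorphism, we get $H \setminus \Nbd(b) \cong F \tilde\times I$ in the non-orientable case and $\cong F \times I$ in the orientable case. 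In particular, in case (1) the page surface should be taken to be $S := $ a fiber-surface that restricts on each handlebody to a horizontal section, and in case (2) one sets $S$ to be the orientation double cover of $F$ restricted appropriately — more precisely, $S$ is the preimage in the $I$-bundle of a properly embedded arc-free horizontal copy, cut so that $S/\iota_j \cong F$.

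Next I would set up the two halves. For case (2): $H_i \setminus \Nbd(b)$ is the twisted $I$-bundle $F \tilde\times I$, whose boundary-minus-the-two-annuli is the orientation double cover $\Sigma_{b,i} \to F$; name this $S$. The deck involution of $\Sigma_{b,i}\to F$ is an orientation-reversing, fixed-point-free involution $\iota_i$ of $S$ (fixed-point-free because the $I$-bundle is twisted, i.e. $F$ non-orientable), and $\Sigma_{b,i}$ is recovered from $S\times I$ by gluing $S \times \{i\}$ to itself via $\iota_i$ along with the two binding annuli becoming the solid-torus collar of $b$. The key compatibility is $\iota_0|_{\partial S} = \iota_1|_{\partial S}$: this holds because on each boundary annulus the $I$-bundle structure of $H_1$ and of $H_2$ agree near $b$, the gluing automorphism of $\Sigma$ preserves $b$ and hence is compatible with the product structure on a neighborhood of $b$, so both involutions restrict on $\partial S$ to the same map — the one rotating each binding circle. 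Then reassembling $H_1 \cup H_2$ and collapsing the $S^1$-fibers over $\partial S$ reproduces exactly the quotient $S(\iota_0,\iota_1)$ with $M = H_1 \cup H_2$, with $\Sigma_b \cong S$ as promised in Section~\ref{sec:Open and  twisted book decompositions}. Case (1) is the same argument with "twisted $I$-bundle" replaced by "product $I$-bundle" and "orientation-reversing fixed-point-free involution" replaced by the monodromy $h$ obtained by comparing the two product structures across the gluing map; here $S$ is just a horizontal page, $h$ is orientation-preserving and fixes $\partial S$, and collapsing the $S^1$-fibers over $\partial S$ gives the open book $(S,h)$.

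The main obstacle is bookkeeping near the binding: one must check carefully that the gluing automorphism $\varphi$ of $\Sigma$, which is only assumed to preserve $b$ setwise, can be isotoped (rel nothing, or within the class of maps preserving $b$) to a map that is a bundle map on a neighborhood of $b$ compatible with the two $I$-bundle structures, so that after collapsing fibers the gluing descends correctly and the resulting involutions (or monodromy) are genuinely well-defined homeomorphisms of $S$. This is where one uses that a neighborhood of $b$ in $\Sigma$ inherits canonical product structures from both $H_1 \to F$ and $H_2 \to F$, and that any automorphism of an annulus preserving its core and its two product structures is, up to isotopy fixing the core, a fiber-preserving map; Dehn twisting about $b$ (as in the hypotheses of Theorems~\ref{thm:the Goeritz groups of distance-2 Heegaard splittings} and~\ref{thm:the Goeritz groups of distance-4 Heegaard splittings}) is exactly such a map and is absorbed into the choice of the collapsing identification. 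Once this local normalization is in place, the global identification of $M$ with $S(h)$ or $S(\iota_0,\iota_1)$ is immediate from the definitions, and the statements about $b$ being the binding and $\Sigma_b \cong S$ follow by construction.
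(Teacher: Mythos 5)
Your plan---cut each $H_i$ along its core section $F_i$, recover $H_i$ as the quotient of $\Sigma_b\times I$ by the deck involution $\iota_i$ of the double cover $\Sigma_b\to F_i$ acting on one end, and reassemble the two halves into $S(\iota_0,\iota_1)$---is, once unwound, the same decomposition the paper uses: the authors simply place $F_1,F_2$ in $M$ and identify $\Cl(M-\Nbd(F_1\cup F_2))$ with $\Sigma_b\times I$. So the strategy is the intended one.

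The one step that genuinely needs an argument is the compatibility $\iota_0|_{\partial S}=\iota_1|_{\partial S}$ demanded by the definition of a twisted book decomposition, and here your proposed ``local normalization'' is wrong. You assert that $\varphi$ ``can be isotoped \ldots to a map that is a bundle map on a neighborhood of $b$ compatible with the two $I$-bundle structures,'' and that ``Dehn twisting about $b$ \ldots is exactly such a map.'' Neither claim holds: when $\varphi=\tau_b^k$---precisely the case used in Theorem~\ref{thm:the Goeritz groups of distance-2 Heegaard splittings}---$\varphi$ cannot be isotoped within $\Aut_+(\Sigma,b)$ to a map sending the $I$-fibers of the annulus $\Nbd(b;\Sigma)$ coming from $H_1$ to those coming from $H_2$; a Dehn twist about the core of an annulus is not isotopic to a fiber-preserving map of that annulus. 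Fortunately, what you actually need is weaker and is automatic. Each $I$-bundle structure on $\Nbd(b;\Sigma)$ induces a pairing $\partial\Sigma_b\to\partial\Sigma_b$ that matches the two endpoints of each $I$-fiber arc of the annulus, and this endpoint pairing is insensitive to how the fiber arcs spiral (twisting the annulus rel $\partial$ moves the arcs but not their endpoints). Hence the two pairings coincide for every gluing map preserving $b$, which gives $\iota_0|_{\partial S}=\iota_1|_{\partial S}$ with no normalization of $\varphi$ at all. Replace the erroneous normalization claim with this observation and the rest of your argument goes through; the same remark disposes of the analogous compatibility check in case (1).
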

\begin{proof}
The first assertion is clear from the definition. 
Suppose that $F$ is non-orientable. 
For each $i \in \{ 1, 2 \}$, let $F_i$ be the surface in $H_i$ corresponding to 
the section $F \times \{ 1 / 2 \}$ of the twisted $I$-bundle $H \to F$. 
Set $\Sigma_b := \Cl (\Sigma - \Nbd (b))$, where $\Sigma$ is the Heegaard surface of the 
splitting $M = H_1 \cup H_2$. 
Then $\Cl (M - \Nbd (F_1 \cup F_2))$ is homeomorphic to $\Sigma_b \times I$, which gives 
the structure of a twisted book decomposition of $M$. 
The assertion is now clear from the construction. 
\end{proof}

\section{The Goeritz groups of the Heegaard splittings induced from twisted book decompositions}
\label{sec:The Goeritz groups of the Heegaard splittings induced from twisted book decompositions}

In this section, let $F$ denote a compact non-orientable surface of negative Euler characteristic 
with a single boundary component,  
let $\pi: H \to F$ denote the orientable $I$-bundle with the binding $b$. 
We set $\Sigma := \partial H$, and 
$M = H_1 \cup_\Sigma H_2$ always denotes the Heegaard splitting, where 
$H_1$ and $H_2$ are copies of $H$, and $M$ is obtained by gluing 
$H_1$ to $H_2$ according to an automorphism $\varphi \in \Aut_+ (\Sigma , b)$. 
Note that by Lemma~\ref{lem:constructing open and twisted books from I-bundles}, 
$M = H_1 \cup_\Sigma H_2$ is induced from a twisted book decomposition where $b$ is the binding. 
By Lemma 3.1, the distance $d (H_1 , H_2)$ is at most 4.
In this section, we will compute the Goeritz group of $M = H_1 \cup_\Sigma H_2$ in two cases. 
The first case, where we will consider in Subsection~\ref{subsec:The Goeritz groups of distance-2 Heegaard splittings}, 
is that the gluing map $\varphi$ is particularly simple in the sense that $\varphi$ is 
a power of the Dehn twist about the binding $b$. 
The second case, where we will consider in Subsection~\ref{subsec:The Goeritz groups of distance-4 Heegaard splittings}, 
is, on the contrary, that the gluing map $\varphi$ is complicated in the sense that  
the distance in $\mathcal{C} (\Sigma_b) $ between the images of subsurface projection $\pi_{\Sigma_b}$ of 
$\mathcal{D} (H_1)$ and $\mathcal{D} (H_2)$ is sufficiently large, 
where $\Sigma_b := \Cl (\Sigma - \Nbd (b))$.

\subsection{The Goeritz groups of distance-2 Heegaard splittings}
\label{subsec:The Goeritz groups of distance-2 Heegaard splittings}


Let $k$ be an integer. 
Suppose that the gluing map  
$\partial H_1 \to \partial H_2$ is the $k$-th power $\tau_b^k$ of the Dehn twist $\tau_b$. 
Note that by  Lemma 3.1 and Yoshizawa \cite[Theorem 1.3]{Yos14}, if $|k| \geq 2$ 
the distance $d (H_1, H_2)$ of this splitting is exactly $2$. 
The aim of this subsection is to prove the following theorem. 

\begin{theorem}
\label{thm:the Goeritz groups of distance-2 Heegaard splittings}
Suppose that $|k| \geq 5$. 
For the Heegaard splitting $M = H_1 \cup_\Sigma H_2$ as above, we have the following.
\begin{enumerate}
\item
The splitting $M = H_1 \cup_\Sigma H_2$ is not induced from an open book decomposition. 
\item
The Goeritz group $\MCG_+ (M , H_1)$ is isomorphic to the group $\MCG (F)$. 
In particular, $\MCG_+ (M , H_1)$ is an infinite group. 
\end{enumerate} 
\end{theorem}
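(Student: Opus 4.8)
The plan is to analyze the Goeritz group by first pinning down a canonical geometric object preserved by every element, then reducing the problem to the mapping class group of the $I$-bundle, and finally invoking Lemma~\ref{lem:injectivity of the map from MCG(F) to MCG+(Sigma)} to get the isomorphism. Since $|k| \geq 5 \geq 2$, Yoshizawa's theorem gives $d(H_1, H_2) = 2$, so there is a geodesic segment of length $2$ between $\mathcal{D}(H_1)$ and $\mathcal{D}(H_2)$; I would show that the binding $b$ is the \emph{unique} vertex of $\mathcal{C}(\Sigma)$ adjacent to both $\mathcal{D}(H_1)$ and $\mathcal{D}(H_2)$. Existence of such a vertex is Lemma~\ref{lem:distance of the binding}: $b$ is a binding of $H_1$ and, since the gluing map $\tau_b^k$ fixes $b$, it is also a binding of $H_2$, hence $d_{\mathcal{C}(\Sigma)}(b, \mathcal{D}(H_i)) = 2$ is not quite adjacency — rather one argues that $b$ is disjoint from a meridian of each side. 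For uniqueness I expect to use the seamedness machinery from Section~\ref{subsec:Pants decompositions and twisting numbers}: fix a solid pants decomposition $\mathscr{S}$ of $H_1$ in which $b$ is $1$-seamed (Lemma~\ref{lem:the binding is 1-seamed}); if $c$ is any other curve disjoint from some meridian of $H_1$ \emph{and} some meridian of $H_2$, then because the meridians of $H_2$ are obtained from those of $H_1$ by applying $\tau_b^k$ with $|k| \geq 5$, the curve $c$ (being disjoint from an $H_2$-meridian, whose twisting number about $b$ is large by Lemma~\ref{lem:minimality of twisting number and outer triangles}) must be highly seamed, hence by Lemma~\ref{lem:lower bound for the number of seams} is $2$-seamed with respect to $\mathscr{S}$, so by Lemma~\ref{lem:obstruction to be a binding} and Lemma~\ref{lem:solid pants decomposition and waves} it has intersection number $>2$ with every meridian of $H_1$, contradicting disjointness. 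This forces $c = b$.

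Granting the uniqueness of $b$, every $\varphi \in \MCG_+(M, H_1)$ restricts to an element of $\MCG_+(\Sigma)$ preserving $b$, hence preserves $\Sigma_b = \Cl(\Sigma - \Nbd(b))$ and the two complementary solid tori / handlebody pieces; more importantly each $H_i$ is the orientable $I$-bundle over $F$ with binding $b$, and an automorphism of $(\Sigma, b)$ extending over $H_1$ (equivalently over the $I$-bundle $H \to F$) must carry the $I$-bundle structure to an $I$-bundle structure with the same binding, which is unique up to isotopy. Composing with the bundle projection $\pi$ as in the proof of Lemma~\ref{lem:injectivity of the map from MCG(F) to MCG+(Sigma)} produces a homomorphism $\MCG_+(M, H_1) \to \MCG(F)$. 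Conversely, given $\varphi_F \in \MCG(F)$, lift it to the $I$-bundle $H$ to get a fiber-preserving automorphism $\Phi_1$ of $H_1$; since $\Phi_1|_\Sigma$ commutes with $\tau_b^k$ (both preserve $b$ and the bundle structure — here I would check that the orientation-preserving lift of $\varphi_F$ and $\tau_b$ commute up to isotopy, which holds because $\tau_b$ is supported near $b$ and $\Phi_1$ preserves $b$), the same map defines an automorphism $\Phi_2$ of $H_2$ agreeing with $\Phi_1$ on $\Sigma$, giving a well-defined element of $\MCG_+(M, H_1)$. These two maps are mutually inverse, and injectivity on the $\MCG(F)$ side is exactly Lemma~\ref{lem:injectivity of the map from MCG(F) to MCG+(Sigma)}, so $\MCG_+(M, H_1) \cong \MCG(F)$, which is infinite since $F$ has negative Euler characteristic. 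For part (1): if the splitting were induced from an open book $(S, h)$ with binding $b'$, then by Lemma~\ref{lem:constructing open and twisted books from I-bundles}(1) $b'$ would be a binding of an \emph{orientable} $I$-bundle structure on $H_1$; but $b$ is the unique vertex adjacent to meridians of both sides, so $b' = b$, and $b$ is the binding of a \emph{twisted} ($F$ non-orientable) $I$-bundle — one then derives a contradiction, e.g.\ from the fact that the page of an open book is orientable while $\Sigma_b \cong F \tilde\times \partial I$ forces the relevant surface to be non-orientable, or more directly because an open book structure would give a nontrivial $\mathbb{Z}$-summand (the monodromy rotation) acting in a way incompatible with the computed group $\MCG(F)$; the cleanest route is to observe that in an open-book splitting the two handlebodies are $I$-bundles over the \emph{same orientable} page $S$ glued by a map preserving a product structure, which would make $\Sigma_b$ orientable double-covering an orientable surface trivially, contradicting that $H_i$ is a twisted $I$-bundle over non-orientable $F$.

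The main obstacle I anticipate is the uniqueness statement: showing that $b$ is the \emph{only} curve simultaneously disjoint from a meridian of $H_1$ and a meridian of $H_2$. This is where the hypothesis $|k| \geq 5$ (rather than merely $|k| \geq 2$) must be used, and it requires carefully combining the twisting-number estimates (Lemma~\ref{lem:minimality of twisting number and outer triangles}), the passage from large twisting number to $2$-seamedness (Lemma~\ref{lem:lower bound for the number of seams}, using that $b$ is $1$-seamed), and the intersection-number obstruction (Lemma~\ref{lem:obstruction to be a binding} together with Lemma~\ref{lem:solid pants decomposition and waves}). One has to handle the two possible "sides" — a competitor curve $c$ disjoint from an $H_1$-meridian versus disjoint from an $H_2$-meridian — symmetrically, and one must be careful that efficient position and the absence of outer triangles can be arranged simultaneously for the relevant meridians. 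A secondary technical point is verifying that the bundle automorphisms $\Phi_1, \Phi_2$ truly glue to a homeomorphism of $M$ (compatibility on $\Sigma$ with the twist $\tau_b^k$), which should follow from $\Phi_i$ being isotopic to a map fixing a collar of $b$ pointwise, but deserves an explicit sentence.
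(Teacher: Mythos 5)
Your high-level architecture is close to the paper's: construct a map $\MCG(F)\to\MCG_+(M,H_1)$ (via lifting to the $I$-bundle), use Lemma~\ref{lem:injectivity of the map from MCG(F) to MCG+(Sigma)} for injectivity, show every Goeritz element preserves $b$ for surjectivity, and verify extendability to $H_2$ (essentially Lemma~\ref{lem:extending automorphisms of Sigma to M}). However, the mechanism you propose for pinning down $b$ is confused in a way that breaks both parts of the argument, and you omit a nontrivial reduction that the paper needs.

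The central problem is your claim that ``$b$ is the unique vertex of $\mathcal{C}(\Sigma)$ adjacent to both $\mathcal{D}(H_1)$ and $\mathcal{D}(H_2)$.'' Lemma~\ref{lem:distance of the binding} gives $d_{\mathcal{C}(\Sigma)}(b,\mathcal{D}(H_i))=2$, which means $b$ \emph{intersects every} meridian of $H_i$; it is \emph{not} disjoint from any meridian, so it is not adjacent to either disk set. You note this tension (``not quite adjacency'') but then write ``rather one argues that $b$ is disjoint from a meridian of each side,'' which contradicts what you just observed. Consequently your proposed uniqueness step does not prove what you want: you consider curves $c$ disjoint from a meridian of each side (the genuine ``middle vertices'' of length-$2$ geodesics), argue they cannot exist, and conclude ``$c=b$'' --- but $b$ is not among them, and if the seamedness argument showed no such $c$ exists it would yield $d(H_1,H_2)\geq 3$, contradicting Yoshizawa's $d=2$. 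The seamedness/twisting machinery does not eliminate middle vertices of geodesics; it eliminates candidate \emph{bindings}. What the paper actually shows is that the binding $b$ is the unique simple closed curve that is the binding of a book decomposition inducing this Heegaard splitting: any element $\varphi$ of the Goeritz group sends $b$ to another such binding $b'$, and if $b'\neq b$ then (after the case analysis on whether $b\cap b'=\emptyset$) one of $b'$ or $\tau_b^k(b')$ is $2$-seamed with respect to a solid pants decomposition in which $b$ is $1$-seamed, hence not a binding by Lemma~\ref{lem:obstruction to be a binding} --- contradiction. The $3$-dimensional notion of ``binding'' is essential and cannot be replaced by a curve-graph adjacency condition.

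Two further gaps: first, the surjectivity argument must handle the case $b\cap\varphi(b)=\emptyset$, where the twisting-number estimates do not directly apply; the paper conjugates $\varphi$ by a lift of a Penner pseudo-Anosov of $F$ to reduce to the intersecting case, and your proposal has no such reduction. Second, your proof of part~(1) leans on the unique-vertex claim (``so $b'=b$''), which as noted is not available; you then offer several tentative contradictions without committing to one. The paper's actual argument for (1) bypasses this entirely by working directly with the open-book binding $b'$: if $b\cap b'=\emptyset$ it finds an essential disk in $H$ disjoint from $b$ via the product $I$-bundle over the orientable page, contradicting $d_{\mathcal{C}(\Sigma)}(b,\mathcal{D}(H))=2$; if $b\cap b'\neq\emptyset$ it runs the seamedness argument on $b'$ and $\tau_b^k(b')$. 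You correctly identified the relevant lemmas but not the object (a binding, not a geodesic midpoint) to which they must be applied.
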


\begin{proof}[Proof of Theorem~$\ref{thm:the Goeritz groups of distance-2 Heegaard splittings}$ $(1)$]
We suppose for a contradiction that the Heegaard splitting $M = H_1 \cup_{\Sigma} H_2$ is induced from 
an open book decomposition. 
Let $b'$ be the binding of the open book decomposition. 
Using the identification of $H_1$ with $H$, we regard $b$ and $b'$ 
as bindings of $H$. 
Since $\tau_b^k$ is the gluing map for the Heegaard splitting, 
$\tau_b^k(b')$ is a binding of $H$ as well. 
By Lemma~\ref{lem:the binding is 1-seamed} there exists 
a solid pants decomposition $\mathscr{S}$ of $H$ such that 
$b$ is $1$-seamed with respect to $\mathscr{S}$. 
Since $\Sigma - b$ is connected whereas $\Sigma - b'$ consists of two components, 
$b$ and $b'$ are not isotopic on $\Sigma$.  

Suppose first that $b \cap b' = \emptyset$. 
Let $\pi' : H \to F'$ be the $I$-bundle with $b'$ the binding. 
Needless to say, this is the trivial bundle. 
Hence, $\pi' (b)$ is a simple closed curve on $F'$. 
Since $b$ and $b'$ are not parallel, and $F'$ is orientable, 
there exists an essential simple arc $\alpha$ on $F'$ disjoint from $\pi' (b)$. 
Then ${\pi'}^{-1} (\alpha)$ is an essential disk in $H$ disjoint from $b$.  
It follows that $d_{\mathcal{C} (\Sigma)} (b, \mathcal{D} (H) ) \leq 1$. 
This contradicts Lemma~\ref{lem:distance of the binding}. 

Suppose that $b \cap b' \neq \emptyset$. 
By Lemma~\ref{lem:obstruction to be a binding}
the binding $b'$ cannot be $2$-seamed with respect to $\mathscr{S}$. 
Let $\mathscr{P}$ be the set of the boundaries of the disks in $\mathscr{S}$. 
Let $C$ be the union of the simple closed curves of $\mathscr{P}$. 
Set $N := \Nbd (b; \Sigma)$. 
 We may isotope $b'$ so that $b'$ in efficient position with respect to $(N, C)$ and 
 $(N, \mathscr{P}, b')$ admits no outer triangles. 
Let $\alpha_1, \ldots, \alpha_r$ be the components of $b' \cap N$, and 
$t_j$ $(j \in \{1 , \ldots , r \})$ the twisting number of $\alpha_j$ in $N$ with respect to $C$. 
By Lemma~\ref{lem:lower bound for the number of seams} 
we have $|t_j| \leq 2$ for all $j$. 
Since $|k| \geq 5$ by the assumption, this implies that 
either $ k + t_j \geq 0$ $($for all $j)$ or 
$ k + t_j \leq 0$ $($for all $j)$. 
It then follows from Lemma~\ref{lem:minimality of twisting number and outer triangles} 
that 
$\tau_b^k ( b' )$ remains to be in efficient position with respect to $(N , C)$, 
and the twisting number of $\tau_b^k (\alpha_j)$ in $N$ with respect to $N$ is $k + t_j$. 
In particular, we have $|k + t_j|  \geq |k| - |t_j|  \geq |k| - 2 > 2$. 
Again by Lemma~\ref{lem:lower bound for the number of seams}, the binding $\tau_b^k (b')$ is 
$2$-seamed with respect to $\mathscr{S}$. 
 This contradicts Lemma~\ref{lem:obstruction to be a binding}. 
\end{proof}

To prove Theorem~$\ref{thm:the Goeritz groups of distance-2 Heegaard splittings}$ $(2)$, 
we need the following lemma. 
\begin{lemma}
\label{lem:extending automorphisms of Sigma to M}
Let $\varphi$ be an automorphism of $\Sigma$ that is extendable over $H_1$. 
If $\varphi$ preserves the binding $b$, $\varphi$ is extendable over $H_2$ as well. 
Thus, $\varphi$ can be regarded as an element of $\MCG_+ (M, H_1)$. 
\end{lemma}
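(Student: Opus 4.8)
The plan is to exploit the symmetry of the construction of $M = H_1 \cup_\Sigma H_2$: since $H_1$ and $H_2$ are copies of the same $I$-bundle $H \to F$ glued along $\Sigma$ via $\varphi$, the gluing involution exchanging the two sides is the key tool. First I would set up the gluing explicitly. Identify $H_1 = H$ and $H_2 = H$ as copies of the orientable $I$-bundle, so that $M$ is obtained from $H_1 \sqcup H_2$ by gluing $\partial H_1 \to \partial H_2$ via the chosen $\varphi_0 \in \Aut_+(\Sigma, b)$ (the gluing map, not to be confused with the $\varphi$ in the statement). The involution $\iota \colon H \to H$ that reverses the $I$-fibers, i.e.\ the map coming from $I \to I$, $t \mapsto 1-t$, on the bundle $H \to F$, is an orientation-reversing automorphism of $H$ that restricts on $\Sigma = \partial H$ to an orientation-reversing automorphism $\iota_\Sigma$ preserving the binding $b$ (since $\iota$ preserves the annuli $\pi^{-1}(\partial F)$ and swaps their two boundary circles, but fixes the core $b$ up to isotopy).

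Next I would use this to produce an orientation-preserving automorphism $\Psi$ of $M$ that swaps $H_1$ and $H_2$. On $H_1 = H$ set $\Psi|_{H_1} = \iota$ followed by the identification $H_1 \to H_2$, and on $H_2 = H$ set $\Psi|_{H_2}$ to be the corresponding inverse construction; the maps are chosen so they agree on $\Sigma$ under the gluing $\varphi_0$, which works precisely because $\varphi_0$ preserves $b$ (so $\varphi_0$ conjugates $\iota_\Sigma$-type data on one side to the other) — this is essentially the content of Lemma~\ref{lem:constructing open and twisted books from I-bundles}, which tells us $M$ carries a twisted book decomposition symmetric in the two pages. The upshot is an orientation-preserving $\Psi \in \Aut_+(M)$ with $\Psi(H_1) = H_2$, $\Psi(H_2) = H_1$, and $\Psi|_\Sigma$ a specific automorphism preserving $b$.

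Now given $\varphi \in \Aut(\Sigma, b)$ extendable over $H_1$, say $\varphi = \Phi_1|_\Sigma$ with $\Phi_1 \in \Aut(H_1)$, I would build an extension over $H_2$. Consider the automorphism $\Psi^{-1} \circ \widehat{\Phi_1} \circ \Psi$ of $M$ restricted appropriately, where $\widehat{\Phi_1}$ denotes $\Phi_1$ on $H_1$ extended arbitrarily (it need only be defined near $H_1$): since $\Psi$ carries $H_2$ to $H_1$, conjugating $\Phi_1$ by $\Psi$ gives an automorphism of $H_2$, and its restriction to $\Sigma$ is $(\Psi|_\Sigma)^{-1} \circ \varphi \circ (\Psi|_\Sigma)$. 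This is not quite $\varphi$ itself, so the final step is to absorb the discrepancy: because $\Psi|_\Sigma$ preserves $b$ and is itself extendable over both handlebodies (being the restriction of $\Psi$), and because $\varphi$ commutes-up-to-isotopy with $\Psi|_\Sigma$ in the relevant sense — here one uses that the mapping class group of $\Sigma$ relevant to us, namely the image $L(\MCG(F))$ of Lemma~\ref{lem:injectivity of the map from MCG(F) to MCG+(Sigma)}, is where the extendable classes live and $\Psi|_\Sigma$ lies in its normalizer — one concludes $\varphi$ itself extends over $H_2$.

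The main obstacle will be making the compatibility in the middle step precise: one must check that the conjugating map $\Psi|_\Sigma$ and the given $\varphi$ interact correctly, i.e.\ that $\Psi|_\Sigma$ conjugates the set of classes extendable over $H_1$ onto the set extendable over $H_2$ \emph{and} that, for $\varphi$ already extendable over $H_1$, this forces extendability over $H_2$ without modification — which ultimately rests on the fact that $\Psi$ is a \emph{global} automorphism of $M$ exchanging the two sides, so conjugation by it is an automorphism of the pair-structure that swaps $(M,H_1)$ and $(M,H_2)$. Once this is in hand, the final sentence ``$\varphi$ can be regarded as an element of $\MCG_+(M,H_1)$'' follows immediately: an automorphism of $\Sigma$ extendable over both $H_1$ and $H_2$ glues to an automorphism of $M$ preserving each handlebody setwise, and after composing with a power of $\Psi$ if necessary we may take it orientation-preserving; hence it defines a class in $\MCG_+(M,H_1)$.
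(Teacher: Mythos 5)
Your approach is genuinely different from the paper's, and it has concrete gaps. The paper's proof is short and direct: since $\varphi$ is orientation-preserving and preserves $b$, on the annulus $\Nbd(b;\Sigma)\cong S^1\times I$ it is isotopic either to the identity or to $R(\theta,r)=(-\theta,1-r)$; both visibly commute with $T_k(\theta,r)=(\theta+2\pi k r,r)$, the local model for $\tau_b^k$. Hence $\varphi$ and $\tau_b^k$ commute up to isotopy, and then $\varphi(\mathcal{D}(H_2))=\varphi\,\tau_b^k(\mathcal{D}(H_1))=\tau_b^k\,\varphi(\mathcal{D}(H_1))=\tau_b^k(\mathcal{D}(H_1))=\mathcal{D}(H_2)$, which is exactly extendability over $H_2$. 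No global automorphism of $M$ is ever constructed.

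Your route through a page-swapping involution $\Psi\in\Aut_+(M)$ has two problems. First, your justification for the existence of $\Psi$ is incorrect as stated. Setting $\Psi|_{H_1}=\Psi|_{H_2}=\iota$ (the fiber-reversing involution of $H$), the compatibility condition over $\Sigma$ is $\iota_\Sigma=\varphi_0\circ\iota_\Sigma\circ\varphi_0$, equivalently $\iota_\Sigma\,\varphi_0\,\iota_\Sigma^{-1}=\varphi_0^{-1}$. This is \emph{not} a consequence of ``$\varphi_0$ preserves $b$'' in general; it holds here because $\varphi_0=\tau_b^k$ and $\iota_\Sigma$ is orientation-reversing and preserves $b$, so it conjugates $\tau_b$ to $\tau_b^{-1}$. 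For a general $\varphi_0\in\Aut_+(\Sigma,b)$ this compatibility fails and $\Psi$ need not exist, so the special form of the gluing map must enter somewhere.

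Second, and more seriously, the ``absorption'' step — showing that $(\Psi|_\Sigma)^{-1}\circ\varphi\circ(\Psi|_\Sigma)$ being extendable over $H_2$ forces $\varphi$ itself to be extendable over $H_2$ — is exactly where the lemma's content lives, and you leave it at the level of an appeal to a normalizer. When you unwind the coordinates (fixing an identification $\Sigma\cong\partial H_1$), the map $\Psi|_\Sigma$ is not $\iota_\Sigma$ but $\iota_\Sigma$ composed with a power of $\tau_b$ coming from the gluing, so proving that $\varphi$ commutes with $\Psi|_\Sigma$ up to isotopy already requires you to prove that $\varphi$ commutes with $\tau_b^k$ up to isotopy — which is precisely the paper's argument, and which you have not supplied. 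In short, the detour through the global involution does not avoid the key local computation near $b$; it only postpones it, and in your write-up that computation is missing.
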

\begin{proof}
We will first show that $\varphi$ commutes with $\tau_b^k$ up to isotopy. 
We identify $\Nbd (b ; \Sigma )$ with $S^1 \times I$. 
Let $R$ and $T_k$ be the automorphisms of $S^1 \times I$
defined by $R (\theta, r) = (- \theta , 1-r )$ and 
$T_k (\theta, r) = (\theta + 2 \pi k r, r )$. 
Clearly $R$ commutes with $T_k$. 
Up to isotopy, we can assume that $\varphi$ preserves $\Nbd (b; \Sigma)$ and 
$\varphi |_{\Nbd (b; \Sigma)}$ is the identity or $R$. 
We can also assume that the support of $\tau_b^k$ is $\Nbd (b; \Sigma)$ and 
$\tau_b^k |_{\Nbd (b; \Sigma)} = T_k$. 
Therefore $\varphi$ commutes with $\tau_b^k$ up to isotopy. 

To prove that $\varphi$ is extendable over $H_2$, it suffices to see that 
$\varphi (\mathcal{D} (H_2)) = \mathcal{D} (H_2)$. 
This is equivalent to say that $\varphi ( \tau_b^k (\mathcal{D} (H_1))) = \tau_b^k (\mathcal{D} (H_1))$. 
Since $\varphi$ is extendable over $H_1$, it holds  
$\varphi (\mathcal{D} (H_1)) = \mathcal{D} (H_1)$. 
Therefore it follows that 
$\varphi ( \tau_b^k (\mathcal{D} (H_1))) = \tau_b^k ( \varphi (\mathcal{D} (H_1))) = \tau_b^k (\mathcal{D} (H_1))$. 
\end{proof}

Recall that $F$ is a compact non-orientable surface with $\chi (F) < 0$ and $\# \partial F = 1$, 
and  $\pi: H \to F$ is the orientable $I$-bundle with the binding $b$. 
We regard that $F \subset H$ with $\partial F = b$.  
The annulus $\pi^{-1} (\partial F) = \Nbd (b)$ is equipped with the structure of a subbundle of 
$\pi: H \to F$. 
The restriction of $ \pi$ to $\Sigma_b$ $(= \Cl (\Sigma - \Nbd (b)))$ is the orientation double cover of $F$. 
Using the identification of $H_1$ with $H$, we regard $F$ as a surface in $H_1$. 
By Lemma~\ref{lem:injectivity of the map from MCG(F) to MCG+(Sigma)},  
each element $\varphi_F \in \MCG (F)$ lifts to a unique element of $\Aut_+ (\Sigma_b)$. 
Using the $I$-bundle structure of $\Nbd (b)$, this element extends to an automorphism of $\Sigma$ in a unique way. 
Clearly, this is extendable over $H_1$, and further, extendable over $H_2$ as well by 
Lemma~\ref{lem:extending automorphisms of Sigma to M}. 
In this way we get a map $L : \MCG (F) \to \MCG_+ (M, H_1)$.

\begin{proof}[Proof of Theorem~$\ref{thm:the Goeritz groups of distance-2 Heegaard splittings}$ $(2)$]
We will show below that the above map $L : \MCG (F) \to \MCG_+ (M, H_1)$ is an isomorphism. 
The injectivity immediately follows from Lemma~\ref{lem:injectivity of the map from MCG(F) to MCG+(Sigma)}. 
To prove the surjectivity of $L$, it suffices to see that any map $\varphi \in \MCG_+ (M, H_1)$ preserves 
the binding $b$ (up to isotopy). 
Indeed, there exists a unique $I$-bundle structure of $H$ with $b$ the binding. 
Thus, if $\varphi$ preserves $b$ (up to isotopy), it preserves $F$ (up to isotopy). 
Putting $\varphi_F := \varphi|_F$, we have $\varphi = L (\varphi_F)$. 
Suppose for a contradiction that 
there exists a map $\varphi \in \MCG_+ (M, H_1)$ that does not preserve $b$. 

First we will show that we can replace $\varphi$ with another one, if necessary,  
so that  $b \cap \varphi (b) \neq \emptyset$. 
Suppose that $b \cap \varphi (b) = \emptyset$. 
Then $\varphi (b)$ is a simple closed curve on $ \Sigma_b := \Cl (\Sigma - \Nbd (b))$. 
Let $\alpha$ and $\beta$ be two-sided simple closed curves on $F$ satisfying 
$d_{\mathcal{C}(F)} (\alpha , \beta) \geq 3$. 
Due to Penner \cite{Pen88}, the composition $\tau_\alpha \circ \tau_\beta$ of Dehn twists 
is pseudo-Anosov. 
Let $\psi$ be the element of $\Aut_+ (\Sigma)$ defined by taking an orientation-preserving 
lift of $\tau_\alpha \circ \tau_\beta$ to $\Aut_+ (\Sigma_b)$, and then extending it to 
the automorphism of the whole $\Sigma$ 
as explained right before the proof. 
Note that $\psi |_{\Sigma_b}$ is also a pseudo-Anosov map. 
Thus, for a sufficiently large integer $n$, we have 
$\psi^n (\varphi (b)) \cap \varphi (b) \neq \emptyset$. 
By Lemma~\ref{lem:extending automorphisms of Sigma to M}, 
$\psi$ can be regarded as an element of the Goeritz group $\MCG_+ (M , H_1)$. 
Therefore, $\varphi^{-1} \circ \psi^n \circ \varphi$ is an element of  $\MCG_+ (M , H_1)$ 
that satisfies $(\varphi^{-1} \circ \psi^n \circ \varphi) (b) \cap b \neq \emptyset$. 
 
In the following, we assume that $b \cap \varphi (b) \neq \emptyset$. 
Set $b' := \varphi (b)$. 
Since $b$ is a binding of a twisted book decomposition of $M$, so is $b'$ 
of another twisted book decomposition of $M$ that induces that same Heegaard splitting 
$M = H_1 \cup_\Sigma H_2$. 
As explained in the proof of 
Theorem~$\ref{thm:the Goeritz groups of distance-2 Heegaard splittings}$ $(1)$, 
it follows that both $b'$ and $\tau_b^k (b')$ are bindings of $H_1$. 
The same argument as in the proof of 
Theorem~$\ref{thm:the Goeritz groups of distance-2 Heegaard splittings}$ $(1)$ 
shows that at least one of $b'$ and $\tau_b^k (b')$ is $2$-seamed with respect to 
a solid pants decomposition $\mathscr{S}$ of $H_1$. 
Thus, by Lemma~\ref{lem:obstruction to be a binding} at least one of $b'$ and $\tau_b^k (b')$ 
is not a binding of $H_1$. 
This is a contradiction.  
\end{proof}

\subsection{The Goeritz groups of distance-4 Heegaard splittings}
\label{subsec:The Goeritz groups of distance-4 Heegaard splittings}

Recall that $H_1$ and $H_2$ are copies of $H$, and 
$M = H_1 \cup_\Sigma H_2$ is the Heegaard splitting with the gluing map $\varphi \in \Aut_+ (\Sigma , b)$. 
Let $(S, \iota_0, \iota_1)$ be the twisted book decomposition of $M$ that induces 
$M = H_1 \cup_\Sigma H_2$.  
Set $G := \MCG (S)$ and $G_+ := \MCG_+ (S)$. 
Let $G(S, \iota_0, \iota_1)$ denote the intersection of 
the centralizers $C_G (\iota_0)$, $C_G (\iota_1)$, and the subgroup $G_+$ of $G$, 
that is, $G(S, \iota_0, \iota_1) = C_G (\iota_0) \cap C_G (\iota_1) \cap G_+$. 
Set $\mathcal{D}_{\Sigma_b} := \pi_{\Sigma_b}  (\mathcal{D} (H))$. 
Also, recall that $\Sigma_b = \Cl (\Sigma - \Nbd (b))$ 
and $\pi_{\Sigma_b} : 
\mathcal{C}^{(0)} ( \partial H ) \to P (\mathcal{C}^{(0)} ( \Sigma_b ) )$ is a subsurface projection. 

The following is the main theorem of this subsection: 
\begin{theorem}
\label{thm:the Goeritz groups of distance-4 Heegaard splittings}
Suppose $d_{\mathcal{C}(\Sigma_b)} ( \mathcal{D}_{\Sigma_b} , \varphi (\mathcal{D}_{\Sigma_b}) ) > 10$. 
For the Heegaard splitting $M = H_1 \cup_\Sigma H_2$ as above, we have the following.
\begin{enumerate}
\item
The distance $d (H_1, H_2)$ is exactly $4$. 
\item
The Goeritz group $\MCG_+ (M , H_1)$ is isomorphic to the group $G (S, \iota_0, \iota_1)$. 
\end{enumerate} 
\end{theorem}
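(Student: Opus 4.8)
The plan is to prove (1) by a short subsurface‑projection estimate and (2) by showing that restriction to $\Sigma_b$ sets up an isomorphism with $G(S,\iota_0,\iota_1)$; the substance of (2) will be that the binding $b$ is preserved up to isotopy by every element of the Goeritz group. For (1), the inequality $d(H_1,H_2)\le 4$ is already recorded (both $H_1$ and $H_2$ contain the binding $b$, so $d_{\mathcal C(\Sigma)}(\mathcal D(H_i),b)=2$ by Lemma~\ref{lem:distance of the binding}). To get $d(H_1,H_2)\ge 4$, I would suppose $d(H_1,H_2)=n\le 3$, fix a geodesic $(\alpha_0,\dots,\alpha_n)$ in $\mathcal C(\Sigma)$ with $\alpha_0\in\mathcal D(H_1)$ and $\alpha_n\in\mathcal D(H_2)$, and argue by cases. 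If some $\alpha_j$ is isotopic to $b$, then Lemma~\ref{lem:distance of the binding} gives $d_{\mathcal C(\Sigma)}(\alpha_0,b)\ge 2$ and $d_{\mathcal C(\Sigma)}(b,\alpha_n)\ge 2$, forcing $n\ge 4$. Otherwise every $\alpha_j$ is essential and not isotopic to $b$, hence meets $\Sigma_b$, so Lemma~\ref{lem:subsurface projection of a geodesic segment} gives $d_{\mathcal C(\Sigma_b)}(\pi_{\Sigma_b}(\alpha_0),\pi_{\Sigma_b}(\alpha_n))\le 2n\le 6$; since $\pi_{\Sigma_b}(\alpha_0)\subseteq\mathcal D_{\Sigma_b}$ and, $\varphi$ preserving $b$, $\pi_{\Sigma_b}(\alpha_n)\subseteq\varphi(\mathcal D_{\Sigma_b})$, this contradicts $d_{\mathcal C(\Sigma_b)}(\mathcal D_{\Sigma_b},\varphi(\mathcal D_{\Sigma_b}))>10$. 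Hence $d(H_1,H_2)=4$; note that only the weaker bound $>6$ is needed for (1).

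For (2), I would first construct $L\colon G(S,\iota_0,\iota_1)\to\MCG_+(M,H_1)$. Writing $F_j:=S/\iota_j$ for $j=0,1$ — each homeomorphic to $F$ — the handlebodies $H_1,H_2$ are the twisted $I$-bundles over $F_0,F_1$. A mapping class $g$ of $S$ that commutes with $\iota_0$ descends to a mapping class of $F_0$ and extends fiber‑preservingly over $H_1$; commuting with $\iota_1$, it extends over $H_2$; the two extensions agree on $\Sigma$ (both restrict to $g$ on $\Sigma_b$ and are compatible on $\Nbd(b)$), and since $g\in G_+$ the resulting automorphism $\hat g$ is orientation‑preserving. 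Checking that $L$ is a well‑defined homomorphism is routine, and its injectivity follows as in Lemma~\ref{lem:injectivity of the map from MCG(F) to MCG+(Sigma)} and the discussion preceding the proof of Theorem~\ref{thm:the Goeritz groups of distance-2 Heegaard splittings}. The crucial point is that, granting that every $\Phi\in\MCG_+(M,H_1)$ preserves $b$ up to isotopy, the uniqueness of the $I$-bundle structures of $H_1$ and $H_2$ with binding $b$ forces $\Phi|_{H_1}$ and $\Phi|_{H_2}$ to be fiber‑preserving, so $\Phi|_{\Sigma_b}$ commutes with both $\iota_0$ and $\iota_1$ and is orientation‑preserving, that is, $\Phi|_{\Sigma_b}\in G(S,\iota_0,\iota_1)$; then $\Phi\mapsto\Phi|_{\Sigma_b}$ is a well‑defined two‑sided inverse to $L$, which is therefore an isomorphism.

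It remains to show that every $\Phi\in\MCG_+(M,H_1)$ preserves $b$ up to isotopy. Set $b':=\Phi(b)$; pushing the twisted book decomposition forward by $\Phi$ shows $b'$ is again a binding of $M=H_1\cup_\Sigma H_2$, so $d_{\mathcal C(\Sigma)}(b',\mathcal D(H_i))=2$ by Lemma~\ref{lem:distance of the binding}. Suppose $b'\ne b$; then $b'$ is essential and not isotopic to $b$, so $\pi_{\Sigma_b}(b')$ is nonempty and has diameter at most $2$ in $\mathcal C(\Sigma_b)$. If $b\cap b'\ne\emptyset$, then for $i=1,2$ I would pick $D_i\in\mathcal D(H_i)$ realizing $d_{\mathcal C(\Sigma)}(b',\mathcal D(H_i))=2$ together with a length‑$2$ geodesic $(b',x_i,\partial D_i)$; since $b'$ is neither equal to nor disjoint from $b$, the middle vertex $x_i$ is not isotopic to $b$, so all three vertices meet $\Sigma_b$, and Lemma~\ref{lem:subsurface projection of a geodesic segment} gives $d_{\mathcal C(\Sigma_b)}(\pi_{\Sigma_b}(b'),\pi_{\Sigma_b}(\partial D_i))\le 4$; combining the cases $i=1,2$ with $\pi_{\Sigma_b}(\partial D_1)\subseteq\mathcal D_{\Sigma_b}$, $\pi_{\Sigma_b}(\partial D_2)\subseteq\varphi(\mathcal D_{\Sigma_b})$, and the diameter bound yields $d_{\mathcal C(\Sigma_b)}(\mathcal D_{\Sigma_b},\varphi(\mathcal D_{\Sigma_b}))\le 4+2+4=10$, contradicting the hypothesis. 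If instead $b\cap b'=\emptyset$, then $b'$ lies on $\Sigma_b$, and I would use the $I$-bundle structures of $H_1$ and $H_2$ with binding $b'$ to exhibit essential disks $D_i\in\mathcal D(H_i)$ whose boundaries are disjoint from $b$ and meet $b'$ exactly twice; then each $\pi_{\Sigma_b}(\partial D_i)$ lies within distance $2$ of $b'$ in $\mathcal C(\Sigma_b)$, so $d_{\mathcal C(\Sigma_b)}(\mathcal D_{\Sigma_b},\varphi(\mathcal D_{\Sigma_b}))$ is bounded well below $10$, again a contradiction. Hence $b'=b$.

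I expect the genuine obstacle to be exactly this last claim, that $b$ is canonical. In the case $b\cap b'\ne\emptyset$ the estimate is essentially sharp — which is why the hypothesis is phrased with the constant $10$ — so the various coarse inequalities have to be tracked carefully; and the case $b\cap b'=\emptyset$ needs the auxiliary construction of essential disks of $H_i$ that are disjoint from $b$ and meet $b'$ twice, which plays, for a general gluing map $\varphi$, the role that the seam‑and‑twisting computation played in the proof of Theorem~\ref{thm:the Goeritz groups of distance-2 Heegaard splittings}. The remaining ingredients — building $L$, verifying it is a well‑defined injective homomorphism, and recognizing $\Phi\mapsto\Phi|_{\Sigma_b}$ as its inverse — should be routine.
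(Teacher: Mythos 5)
Your overall strategy matches the paper's. Part (1) is the paper's Lemma~\ref{lem:distance between D(H) and f(D(H))}, including the observation that only the bound $>6$ is needed. For part (2), the construction of $L$ (which the paper calls $\eta$), the identification of its inverse with $\Phi\mapsto\Phi|_{\Sigma_b}$, and the claim that the crux is showing $b$ is preserved up to isotopy, all agree with the paper, and your treatment of $L$ is somewhat more explicit than theirs.

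There is a genuine gap in the final step, in the case $b\cap b'=\emptyset$. You propose to ``exhibit essential disks $D_i\in\mathcal D(H_i)$ whose boundaries are disjoint from $b$ and meet $b'$ exactly twice.'' No such disk exists: if $\partial D_i\cap b=\emptyset$, then $d_{\mathcal C(\Sigma)}(b,\mathcal D(H_i))\le 1$, contradicting Lemma~\ref{lem:distance of the binding}, which gives $d_{\mathcal C(\Sigma)}(b,\mathcal D(H_i))=2$ since $b$ is a binding of $H_i$. (Even granting such $D_i$, the inference that $\pi_{\Sigma_b}(\partial D_i)$ is within distance $2$ of $b'$ from $i(\partial D_i,b')=2$ is not justified.) So that branch of the argument would fail as written.

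The fix is easy, and it is what the paper does: drop the case split entirely. In the geodesic $(b',x_i,\partial D_i)$, the reason $x_i$ cannot be isotopic to $b$ has nothing to do with whether $b$ meets $b'$; rather, if $x_i=b$, then $d_{\mathcal C(\Sigma)}(b,\partial D_i)\le 1$, again contradicting Lemma~\ref{lem:distance of the binding}. Likewise $\partial D_i$ is never isotopic to $b$ because $b$ bounds no disk. And $b'$ itself always has nonempty projection to $\Sigma_b$, being essential and not isotopic to $b$, whether or not $b\cap b'=\emptyset$. So Lemma~\ref{lem:subsurface projection of a geodesic segment} applies to both geodesics regardless, and the estimate $d_{\mathcal C(\Sigma_b)}(\mathcal D_{\Sigma_b},\varphi(\mathcal D_{\Sigma_b}))\le 4+2+4=10$ goes through uniformly. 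Once you remove the spurious case distinction and replace your reason for $x_i\ne b$ with this more robust one, your argument coincides with the paper's.
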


In Lemma~\ref{lem:exisgence of f that take D far from D}, we will see that 
there actually exists a Heegaard splitting satisfying the condition in 
Theorem~\ref{thm:the Goeritz groups of distance-4 Heegaard splittings}. 

Recall that $F$ is a compact non-orientable surface with $\chi (F) < 0$ and $\# \partial F = 1$, and 
$\pi: H \to F$ is the orientable $I$-bundle with the binding $b$.  
We equip with $\Int \, F$ and $\Int \, \Sigma_b$ hyperbolic metrics so that 
the covering map $p := \pi|_{\Int \Sigma_b}$ is a local isometry. 
Consider the pull-back $p^{*} : \ML (F) \to \ML (\Sigma_b)$ 
defined by $p^{*}(\lambda, m) = ( p^{-1} (\lambda) , m \circ p )$ 
for $(\lambda , m) \in \ML (F)$. 
Clearly, this is a well-defined, injective  
piecewise linear map that is equivariant under the action of $\Real_+$. 
Thus, this map induces an injective piecewise projective map 
$c : \mathbb{P} \ML (F) \to \mathbb{P} \ML (\Sigma_b)$. 
Let $\mathcal{F} \subset \mathbb{P} \ML ( \Sigma_b)$ 
denote the image of the set $\mathcal{S} (F)$ of weighted multicurves on $F$ by the map $c$. 

\begin{lemma}[Johnson \cite{Joh11c}]
\label{lem:F is nowhere dense in PML}
The set $\mathcal{F}$ is nowhere dense in $\mathbb{P} \ML (\Sigma_b)$. 
\end{lemma}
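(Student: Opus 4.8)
The plan is to analyze the map $c : \mathbb{P}\ML(F) \to \mathbb{P}\ML(\Sigma_b)$ by dimension count together with a characterization of its image in terms of the involution $\iota := \iota_0$ (or rather the deck transformation of the double cover $p : \Sigma_b \to F$). First I would recall from Theorem~\ref{thm:PL structure on ML} the dimensions of the two spheres involved: if $F \cong (\#_h \mathbb{RP}^2) - \Int D^2$ with the appropriate genus/boundary data, then $\mathbb{P}\ML(F) \cong S^{3h + 2n - 7}$ (using the non-orientable case of the formula, with the $6g$ term absent), while $\Sigma_b$, being the orientation double cover, is an orientable surface whose genus and number of boundary components are determined by $F$; one computes $\mathbb{P}\ML(\Sigma_b) \cong S^{d'}$ with $d'$ strictly larger than $3h + 2n - 7$. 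The map $c$ is injective and piecewise projective, so its image is a compact piecewise-projectively-embedded sphere of strictly smaller dimension inside $\mathbb{P}\ML(\Sigma_b)$; hence $c(\mathbb{P}\ML(F))$ has empty interior, i.e. is closed with no interior points. Since $\mathcal{F} = c(\mathcal{S}(F)) \subset c(\mathbb{P}\ML(F))$, the set $\mathcal{F}$ is contained in a nowhere dense closed set, hence is itself nowhere dense.

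The step I expect to be the main obstacle is making the dimension comparison completely rigorous and dealing with the subtlety that $\mathbb{P}\ML(F)$ for non-orientable $F$ is genuinely lower-dimensional than $\mathbb{P}\ML(\Sigma_b)$ — one must double-check the exponents in Theorem~\ref{thm:PL structure on ML} for the precise $F$ in question (compact non-orientable, $\chi(F) < 0$, one boundary component, so $F \cong \#_h \mathbb{RP}^2 - \Int D^2$ with $h \geq 3$) and verify that the orientation double cover $\Sigma_b$ has the right topological type. Here $\Sigma_b$ has genus $h - 1$ and two boundary components, so $\mathbb{P}\ML(\Sigma_b) \cong S^{6(h-1) + 2\cdot 2 - 7} = S^{6h - 9}$, whereas $\mathbb{P}\ML(F) \cong S^{3h + 2\cdot 1 - 7} = S^{3h - 5}$, and $3h - 5 < 6h - 9$ precisely when $h > 4/3$, which always holds. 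One should also confirm that a piecewise-projective embedding of a lower-dimensional sphere into a higher-dimensional sphere has empty interior; this follows from invariance of domain applied chart-by-chart on the finitely many pieces, or simply from the fact that the image is a finite union of projective subspaces intersected with the sphere, each of positive codimension.

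An alternative, more intrinsic route — which may be cleaner and avoids relying on the exact homeomorphism type — is to characterize $c(\mathbb{P}\ML(F))$ as the fixed-point set of the involution $\iota_*$ induced on $\mathbb{P}\ML(\Sigma_b)$ by the nontrivial deck transformation $\iota$ of $p$. A measured lamination on $\Sigma_b$ descends to $F$ exactly when it is $\iota$-invariant, so $c(\mathbb{P}\ML(F)) = \operatorname{Fix}(\iota_*)$. Since $\iota$ is orientation-reversing and fixed-point-free, $\iota_*$ acts nontrivially on $\mathbb{P}\ML(\Sigma_b)$, and a nontrivial finite-order piecewise-projective involution of a sphere has fixed-point set of positive codimension (being a proper subcomplex in the piecewise-projective structure, on which $\iota_*$ acts simplicially after subdivision); hence $\operatorname{Fix}(\iota_*)$ is nowhere dense, and so is $\mathcal{F}$. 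Either way, the conclusion is that $\mathcal{F}$ sits inside a closed nowhere dense subset of $\mathbb{P}\ML(\Sigma_b)$, which is exactly the assertion. I would present the dimension-count version as the main argument since it is the most elementary, citing Penner-Harer for the structure theory, and remark that it is essentially the argument of Johnson~\cite{Joh11c}.
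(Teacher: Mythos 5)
Your main argument is essentially the same dimension count as the paper's: compute $\mathbb{P} \ML (F)\cong S^{3h-5}$ and $\mathbb{P} \ML (\Sigma_b)\cong S^{6h-9}$ from Theorem~\ref{thm:PL structure on ML}, and observe that the injective continuous map $c$ between compact Hausdorff spheres is a topological embedding of a strictly lower-dimensional sphere, so its image (which contains $\Cl(\mathcal{F})$) has empty interior; the paper additionally uses density of $\mathcal{S}(F)$ in $\mathbb{P} \ML (F)$ to upgrade the containment to the equality $\Cl(\mathcal{F}) = c(\mathbb{P} \ML (F))$, but as you note the containment already suffices. Two small inaccuracies: the relevant constraint is $h \geq 2$ rather than $h \geq 3$ (since $\chi(F) = 1 - h < 0$), and the image of a piecewise projective map is generally not a finite union of projective subspaces intersected with the sphere, so that alternative justification is not correct as stated — but the invariance-of-domain argument you also give is right and is all that is needed.
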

In the unpublished paper \cite{Joh11c}, Johnson gave a sketch of the proof of this lemma. 
The following proof is essentially due to his idea. 

\begin{proof}[Proof of Lemma~$\ref{lem:F is nowhere dense in PML}$]
By Lemma~\ref{thm:multicurves in ML}, 
the set $\mathcal{S}(F)$ is dense in $\mathbb{P}\ML (F) $.  
Since $c$ is a continuous map between spheres, which are compact and Hausdorff, we have 
\[ c ( \mathbb{P} \ML (F) ) = c ( \Cl ( \mathcal{S} (F) ) )  =  \Cl ( c ( \mathcal{S} (F)) ) = \Cl (  \mathcal{F}  ) . \]

Let $F = \#_h \mathbb{R}P^2 - \Int (D^2)$ (thus, $\Sigma_b = \#_{h-1} T^2 - \sqcup_{2} \Int (D^2)$). 
By Theorem~\ref{thm:PL structure on ML}, we have 
$\mathbb{P} \ML (F) \cong S^{3 h - 5}$ and 
$\mathbb{P} \ML (\Sigma_b) \cong S^{6h - 9}$. 
Thus, $c$ is a piecewise projective embedding. 
Noting that $3h - 5 < 6h - 9$ for $h \geq 2$, 
we conclude that $\Image \, c = \Cl (\mathcal{F})$ 
is nowhere dense in $\mathbb{P} \ML (\Sigma_b)$. 
\end{proof}

Let $\mathcal{I}$ denote the set of projectivizations of stable and unstable laminations of 
pseudo-Anosov automorphisms of $\Sigma_b$.  
In the following, by abuse of notation we simply write $\lambda$ to mean a projective geodesic measured lamination 
$[(\lambda, m)] \in \mathbb{P} \ML (\Sigma_b)$. 
This will not cause any confusion.  

\begin{lemma}
\label{lem:invariant laminations}
\begin{enumerate}
\item
The set $\mathcal{I}$ is dense in $\mathbb{P} \ML (\Sigma_b)$. 
\item
Let $\lambda$ be a point of $\mathcal{I}$, and  $\lambda'$ a point of $\mathbb{P} \ML (\Sigma_b)$. 
If the intersection number of any representatives of $\lambda$ and $\lambda'$ 
in $\ML (\Sigma_b)$ is zero, then $\lambda = \lambda'$. 
\end{enumerate}
\end{lemma}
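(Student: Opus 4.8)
The plan is to treat the two assertions separately, deducing each from standard structural facts about pseudo-Anosov laminations.

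For (1), I would argue that $\Cl (\mathcal{I})$ is a closed, $\MCG (\Sigma_b)$-invariant subset of $\mathbb{P} \ML (\Sigma_b)$ containing every weighted multicurve, hence (by Theorem~\ref{thm:multicurves in ML}) all of $\mathbb{P} \ML (\Sigma_b)$. Since $\Sigma_b$ is non-sporadic it carries pseudo-Anosov automorphisms, so $\mathcal{I} \neq \emptyset$; fix $\lambda \in \mathcal{I}$, which is filling, minimal, and uniquely ergodic by the structure theory of pseudo-Anosov laminations (Fathi--Laudenbach--Po\'enaru \cite{FLP79}). If $f$ is pseudo-Anosov with invariant lamination $\lambda$, then for $g \in \MCG (\Sigma_b)$ the lamination $g(\lambda)$ is invariant under the pseudo-Anosov map $g f g^{-1}$, so $\mathcal{I}$, and hence $\Cl(\mathcal{I})$, is $\MCG (\Sigma_b)$-invariant. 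Because $\lambda$ fills, $i(\gamma_j, \lambda) > 0$ for every essential simple closed curve $\gamma_j$ on $\Sigma_b$; iterating Dehn twists then gives, for any weighted multicurve $c = \sum_{j=1}^k w_j \gamma_j$, that the sequence $(\tau_{\gamma_1}^{n_1} \circ \cdots \circ \tau_{\gamma_k}^{n_k})(\lambda)$ converges to $[c]$ in $\mathbb{P} \ML (\Sigma_b)$ when all $n_j \to \infty$ with $n_j \, i(\gamma_j, \lambda) / w_j$ asymptotically independent of $j$. Thus $\Cl (\mathcal{I}) \supseteq \mathcal{S} (\Sigma_b)$, which is dense, so $\Cl (\mathcal{I}) = \mathbb{P} \ML (\Sigma_b)$.

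For (2), I would represent $\lambda$ and $\lambda'$ by geodesic measured laminations $\Lambda$ and $\Lambda'$. By bilinearity of the intersection form (Theorem~\ref{thm:intersection numbers of measured geodesic laminations}), the hypothesis is equivalent to $\Lambda$ and $\Lambda'$ having no transverse crossing, so $\Lambda \cup \Lambda'$ is a geodesic lamination. A leaf of $\Lambda'$ not contained in $\mathrm{supp}(\Lambda)$ would be a complete geodesic inside a complementary component of $\Lambda$; since $\Lambda$ is filling, such a component is an ideal polygon or a once-punctured ideal polygon, in which a complete geodesic joins two distinct ideal boundary points and is therefore an isolated non-closed leaf, which cannot occur in the support of a measured lamination (Penner--Harer \cite{PH92}). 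Hence $\mathrm{supp}(\Lambda') \subseteq \mathrm{supp}(\Lambda)$; minimality of $\mathrm{supp}(\Lambda)$ forces equality of supports, and then unique ergodicity forces $\Lambda' = c\Lambda$ for some $c > 0$, i.e. $\lambda' = \lambda$.

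The one place where I expect to have to be careful rather than merely quote is the convergence claim in (1): one must choose the twist powers so that the limit is the \emph{prescribed} weighted multicurve $c$ rather than merely some multicurve supported on $\bigcup_j \gamma_j$ — this is exactly where the normalization by $i(\gamma_j, \lambda)$ is needed — and one must check that the contributions to $i\big((\tau_{\gamma_2}^{n_2} \circ \cdots)(\lambda), \cdot\big)$ coming from the part of $(\tau_{\gamma_2}^{n_2} \circ \cdots)(\lambda)$ not carried by the $\gamma_j$ stay bounded as the powers grow. This is a routine estimate with the intersection form and should be written out, but everything else reduces to standard properties of pseudo-Anosov laminations, for which \cite{FLP79} and \cite{PH92} suffice.
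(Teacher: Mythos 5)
Your proposal is correct. The paper disposes of both parts by citation: (1) is attributed to Long~\cite[Lemma~2.6]{Lon86}, and (2) to the ``well-known'' fact that pseudo-Anosov laminations are minimal, uniquely ergodic, and filling. So for part~(2) you have simply written out the intended argument, and it is sound: zero intersection of the geodesic representatives means $\Lambda \cup \Lambda'$ is a lamination, any leaf of $\Lambda'$ outside $\operatorname{supp}(\Lambda)$ would be an isolated diagonal of a complementary (possibly once-punctured) ideal polygon and hence could not carry a transverse measure, so $\operatorname{supp}(\Lambda') \subseteq \operatorname{supp}(\Lambda)$; minimality upgrades the inclusion to equality, and unique ergodicity forces proportionality of the measures. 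For part~(1), you take a genuinely different route: rather than citing Long, you observe that $\mathcal{I}$ is nonempty and $\MCG(\Sigma_b)$-invariant, and then approach an arbitrary weighted multicurve $c = \sum_j w_j \gamma_j$ by iterated Dehn twists $\tau_{\gamma_1}^{n_1}\cdots\tau_{\gamma_k}^{n_k}(\lambda)$ of a fixed filling $\lambda \in \mathcal{I}$, choosing $n_j$ asymptotically proportional to $w_j / i(\gamma_j, \lambda)$ so that the projective limit is $[c]$ rather than some other point of the simplex spanned by the $\gamma_j$; density of multicurves (Theorem~\ref{thm:multicurves in ML}) then finishes. This is more work than the one-line citation but is self-contained given the facts already stated in the paper (Theorem~\ref{thm:PL structure on ML}, Theorem~\ref{thm:multicurves in ML}, and continuity/bilinearity of the intersection pairing from Theorem~\ref{thm:intersection numbers of measured geodesic laminations}), and you correctly identify the one place requiring a real estimate --- controlling the error term in the twist asymptotics and handling the rounding of $n_j$ to integers (relevant when the $w_j$ are irrational), both of which are routine but should indeed be written out if this were to replace the citation.
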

\begin{proof}
(1) follows from Long \cite[Lemma 2.6]{Lon86}. 
(2) follows from a well-known fact that 
the stable and unstable laminations for a pseudo-Anosov automorphism are 
minimal, uniquely ergodic, and fill up the surface. 
\end{proof}

\begin{lemma}
\label{lem:DSigma is nowhere dense in PML}
The set $\mathcal{D}_{\Sigma_b}$ is nowhere dense in $\mathbb{P} \ML (\Sigma_b)$. 
\end{lemma}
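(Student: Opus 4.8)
\textbf{Proof proposal for Lemma~\ref{lem:DSigma is nowhere dense in PML}.}
The plan is to show that $\mathcal{D}_{\Sigma_b}$ is contained in the closure of the set $\mathcal{F}$ of $p$-equivariant (i.e., pulled-back) laminations, which is nowhere dense by Lemma~\ref{lem:F is nowhere dense in PML}. Recall $\mathcal{D}_{\Sigma_b} = \pi_{\Sigma_b}(\mathcal{D}(H))$, and each disk $D \in \mathcal{D}(H)$ is, up to isotopy, of the form $\pi^{-1}(\alpha)$ for $\alpha$ an essential arc (or curve) on $F$, or more generally its boundary meets $\Sigma_b$ in arcs that are vertical with respect to the $I$-bundle structure; in any case $\partial D \cap \Sigma_b$ is invariant under the deck involution of the double cover $p : \Sigma_b \to F$. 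Hence $\pi_{\Sigma_b}(\partial D)$ consists of curves invariant under the deck transformation, so as a point of $\mathbb{P}\ML(\Sigma_b)$ it lies in $c(\mathcal{S}(F)) = \mathcal{F}$ (a $p$-invariant multicurve downstairs projects to a multicurve on $F$, possibly after collapsing, and pulls back to the given one). Therefore $\mathcal{D}_{\Sigma_b} \subset \mathcal{F}$.

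First I would make precise the claim that meridian disks of an orientable $I$-bundle have $I$-vertical boundary: a standard innermost-disk / outermost-arc argument shows any essential disk in $H = F \tilde\times I$ can be isotoped so that $\pi|_{\partial D}$ is a covering onto an essential $1$-submanifold of $F$, equivalently $\partial D$ is transverse to the $I$-fibers and $D = \pi^{-1}(\gamma)$ for $\gamma$ an essential arc or two-sided curve on $F$. Then $\partial D \cap \Sigma_b = p^{-1}(\gamma')$ where $\gamma'$ is obtained from $\gamma$ by cutting along $b = \partial F$, so after applying $\sigma_{\Sigma_b}$ the resulting curves are unions of boundary components of a regular neighborhood of a $p$-invariant $1$-complex together with $\partial \Sigma_b$; these are $p$-invariant, hence descend to $F$ and represent points of $\mathcal{S}(F)$ viewed inside $\mathbb{P}\ML(\Sigma_b)$ via $c$. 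This gives the inclusion $\mathcal{D}_{\Sigma_b} \subseteq \mathcal{F}$ of subsets of $\mathbb{P}\ML(\Sigma_b)$.

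Given the inclusion, the lemma is immediate: $\mathcal{D}_{\Sigma_b} \subseteq \mathcal{F} \subseteq \Cl(\mathcal{F})$, and $\Cl(\mathcal{F})$ has empty interior by Lemma~\ref{lem:F is nowhere dense in PML}; a subset of a nowhere dense set is nowhere dense, so $\mathcal{D}_{\Sigma_b}$ is nowhere dense in $\mathbb{P}\ML(\Sigma_b)$.

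The main obstacle I anticipate is the bookkeeping in the first step: verifying carefully that \emph{every} meridian disk of the orientable $I$-bundle $H \to F$ (not merely the obvious vertical ones $\pi^{-1}(\alpha)$) can be isotoped to vertical position, and that the subsurface projection $\pi_{\Sigma_b}$ of such a boundary curve does indeed land in the image of $c$ rather than just ``close to'' it — one must check that collapsing to the essential part and applying $\sigma_{\Sigma_b}$ preserves $p$-invariance and that the deck involution descends these curves to genuine (weighted) multicurves on $F$. This is where the $I$-bundle structure of $\Nbd(b)$ and the fact that $p = \pi|_{\Sigma_b}$ is exactly the orientation double cover must be used, exactly as in the setup preceding Lemma~\ref{lem:F is nowhere dense in PML}.
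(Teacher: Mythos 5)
The proposal's key step --- the claim that $\mathcal{D}_{\Sigma_b} \subseteq \mathcal{F}$ because ``any essential disk in $H = F \tilde\times I$ can be isotoped so that $\pi|_{\partial D}$ is a covering onto an essential $1$-submanifold of $F$'' --- is false, and this is a fatal gap rather than a bookkeeping issue. Essential disks in an $I$-bundle are \emph{not} all vertical up to isotopy: the handlebody group of $H$ is strictly larger than the subgroup preserving the $I$-bundle structure, and applying a non-fiber-preserving handlebody automorphism to a vertical disk typically produces a disk whose boundary is not deck-invariant and whose projection to $\Sigma_b$ does not lie in $\mathcal{F}$. The Masur--Schleimer result the paper cites at exactly this point, \cite[Lemma~12.20]{MS13}, gives the bound $d_{\mathcal{C}(\Sigma_b)}(\alpha, \mathcal{F}) \leq 3$ for $\alpha \in \mathcal{D}_{\Sigma_b}$; this is a nontrivial, sharp-in-principle estimate, and if your inclusion were correct the distance would be $0$ and there would be nothing for Masur--Schleimer to prove. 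So you cannot conclude $\mathcal{D}_{\Sigma_b} \subseteq \Cl(\mathcal{F})$ directly, and the ``a subset of a nowhere dense set is nowhere dense'' argument collapses.

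What the paper actually does is precisely to upgrade the Masur--Schleimer estimate ``distance $\leq 3$'' to the topological conclusion via a limiting argument. Assuming some open set $U \subset \Cl(\mathcal{D}_{\Sigma_b})$, it picks a point $\lambda \in U$ that is a stable/unstable lamination of a pseudo-Anosov map (possible because $\mathcal{I}$ is dense, Lemma~\ref{lem:invariant laminations}(1)), approximates $\lambda$ by $\alpha_n \in \Cl(\mathcal{D}_{\Sigma_b})$, uses the bound to get a length-$3$ path from each $\alpha_n$ into $\mathcal{F}$, passes to a convergent subsequence at every vertex of the path, and then uses unique ergodicity and filling of $\lambda$ (Lemma~\ref{lem:invariant laminations}(2)) to force the entire limiting path to collapse to $\lambda$. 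This shows $\lambda \in \Cl(\mathcal{F})$ and hence $U \subset \Cl(\mathcal{F})$, contradicting Lemma~\ref{lem:F is nowhere dense in PML}. That passage through the dense set $\mathcal{I}$ and the zero-intersection collapsing argument is the substance the proposal is missing; without it the lemma does not reduce to Lemma~\ref{lem:F is nowhere dense in PML} by a set-theoretic inclusion.
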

\begin{remark}
It it worth noting that in \cite{Mas86} Masur proved that 
$\mathcal{D}(H)$ is nowhere dense in $\mathbb{P} \ML (\Sigma)$.  
\end{remark}
\begin{proof}
Suppose for a contradiction that $\mathcal{D}_{\Sigma_b}$ is not nowhere dense, 
that is, there exists an open set $U$ of $\mathbb{P} \ML (\Sigma_b)$ contained in 
$\Cl (\mathcal{D}_{\Sigma_b})$. 
We will prove that this implies that  $U$ is also contained in $\Cl (\mathcal{F})$, which contradicts 
Lemma~\ref{lem:F is nowhere dense in PML}. 

To prove that, we show that the set $U \cap \mathcal{I}$ is contained in $\Cl (\mathcal{F})$. 
Let $\lambda \in U \cap \mathcal{I}$. 
Since $U$ is contained in $\Cl ( \mathcal{D}_{\Sigma_b} )$, 
there exists a sequence $(\alpha_n)$ in $\Cl (\mathcal{D}_{\Sigma_b})$ such that $\alpha_n$ 
converges to $\lambda$ as $n$ tends to $ \infty$.  
For each $\alpha_n$, we have $d_{\mathcal{C} (\Sigma_b)} (\alpha_n , \mathcal{F}) \leq 3$ due to 
Masur-Schleimer \cite[Lemma 12.20]{MS13}. 
Thus, for each $n$ there exists a path $(\beta_n^0, \beta_n^1, \beta_n^2, \beta_n^3)$ such that 
$\beta_n^0 = \alpha_n$ and $\beta_n^3 \in \mathcal{F}$. 
By Theorem~\ref{thm:PL structure on ML}, 
$\mathbb{P} \ML (\Sigma_b )$ is sequentially compact. 
After passing to a subsequence if necessary, which we still write $(\beta_n^j)$, 
we can assume that the sequence 
$(\beta_n^j)$ converges to a point $\lambda^j$ in $\mathbb{P} \ML (\Sigma_b)$ (as $n \to \infty$) for 
all $j \in \{  0 , 1 , 2 , 3 \}$. 
Note that $\lambda^0 = \lambda \in \mathcal{I}$ and $\lambda^3 \in \Cl  ( \mathcal{F} )$. 
Since the intersection number of any representatives of $\beta_n^j$ and $\beta_n^{j+1}$ in 
$\ML (\Sigma_b)$ is zero for all $n$ and $j$, 
that of any representatives of $\lambda^j$ and $\lambda^{j+1}$ in $\ML (\Sigma_b)$ is zero for 
all $j \in \{  0 , 1 , 2 \}$. 
Since $\lambda^0 \in \mathcal{I}$, we have $\lambda^0 = \lambda^1$ by Lemma~\ref{lem:invariant laminations} (2). 
Applying the same argument repeatedly, we finally get $\lambda^0 = \cdots = \lambda^3$. 
Therefore, $\lambda$ is contained in $\Cl (\mathcal{F})$. 

By Lemma~\ref{lem:invariant laminations} (1), the set $\mathcal{I}$ is dense in $\mathbb{P} \ML (\Sigma_b)$. 
Thus, we conclude that $U \subset \Cl (U \cap \mathcal{I}) = \Cl (\mathcal{F})$. 
\end{proof}

The following lemma shows the existence of a Heegaard splitting satisfying the condition in 
Theorem~\ref{thm:the Goeritz groups of distance-4 Heegaard splittings}. 
\begin{lemma}
\label{lem:exisgence of f that take D far from D}
There exists an automorphism $\psi$ of $\Sigma_b$ such that 
$d_{\mathcal{C}(\Sigma_b)} ( \mathcal{D}_{\Sigma_b} , \psi^n (\mathcal{D}_{\Sigma_b}) )$ 
tends to $\infty$ as $n$ tends to $\infty$. 
\end{lemma}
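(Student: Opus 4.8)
The plan is to exhibit a pseudo-Anosov automorphism of $\Sigma_b$ and use a standard North-South dynamics argument together with the nowhere-density of $\mathcal{D}_{\Sigma_b}$ established in Lemma~\ref{lem:DSigma is nowhere dense in PML}. First I would pick a pseudo-Anosov automorphism $\psi$ of the surface $\Sigma_b$ (such exist since $\chi(\Sigma_b)<0$ and $\Sigma_b$ is not sporadic), with stable and unstable projective laminations $\lambda^{+}, \lambda^{-} \in \mathbb{P}\ML(\Sigma_b)$. Recall that by Thurston's classification $\psi$ acts on the sphere $\mathbb{P}\ML(\Sigma_b)$ with source-sink dynamics: for any compact set $K \subset \mathbb{P}\ML(\Sigma_b) \setminus \{\lambda^{-}\}$ and any neighborhood $V$ of $\lambda^{+}$, we have $\psi^n(K) \subset V$ for all sufficiently large $n$; symmetrically for $\psi^{-n}$ with $\lambda^{+}$ and $\lambda^{-}$ interchanged.

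The key step is to arrange that $\psi$ can be chosen so that neither $\lambda^{+}$ nor $\lambda^{-}$ lies in $\Cl(\mathcal{D}_{\Sigma_b})$. Since $\mathcal{D}_{\Sigma_b}$ is nowhere dense by Lemma~\ref{lem:DSigma is nowhere dense in PML}, its closure has empty interior, so its complement $\mathbb{P}\ML(\Sigma_b) \setminus \Cl(\mathcal{D}_{\Sigma_b})$ is a dense open set; in particular it contains points of $\mathcal{I}$, which is dense by Lemma~\ref{lem:invariant laminations}(1). Pick such a point of $\mathcal{I}$ and let $\psi$ be a pseudo-Anosov realizing it as (say) its unstable lamination $\lambda^{-}$; up to replacing $\psi$ by a suitable power or conjugate one may further ensure $\lambda^{+}$ also avoids $\Cl(\mathcal{D}_{\Sigma_b})$, again using that the complement is dense open — alternatively, it suffices to have just one of the two laminations outside $\Cl(\mathcal{D}_{\Sigma_b})$ and run the argument with $\psi$ or $\psi^{-1}$ accordingly, since the conclusion "$d_{\mathcal{C}(\Sigma_b)}(\mathcal{D}_{\Sigma_b}, \psi^n(\mathcal{D}_{\Sigma_b})) \to \infty$" is symmetric under $n \mapsto -n$ after relabeling.

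With $\psi$ so chosen, I would argue by contradiction. Suppose $d_{\mathcal{C}(\Sigma_b)}(\mathcal{D}_{\Sigma_b}, \psi^n(\mathcal{D}_{\Sigma_b}))$ does not tend to infinity. Then there is a constant $D$ and a subsequence $n_k \to \infty$ together with curves $\alpha_k \in \mathcal{D}_{\Sigma_b}$ and $\beta_k \in \mathcal{D}_{\Sigma_b}$ with $d_{\mathcal{C}(\Sigma_b)}(\alpha_k, \psi^{n_k}(\beta_k)) \leq D$. Passing to subsequences and using sequential compactness of $\mathbb{P}\ML(\Sigma_b)$ (Theorem~\ref{thm:PL structure on ML}), we may assume $\alpha_k \to \mu \in \Cl(\mathcal{D}_{\Sigma_b})$ and $\beta_k \to \nu \in \Cl(\mathcal{D}_{\Sigma_b})$. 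Since $\beta_k \to \nu \neq \lambda^{-}$ (because $\nu \in \Cl(\mathcal{D}_{\Sigma_b})$ whereas $\lambda^{-} \notin \Cl(\mathcal{D}_{\Sigma_b})$), source-sink dynamics gives $\psi^{n_k}(\beta_k) \to \lambda^{+}$; hence, as in the proof of Lemma~\ref{lem:DSigma is nowhere dense in PML}, a bounded-length path of curves in $\mathcal{C}(\Sigma_b)$ joining $\alpha_k$ to $\psi^{n_k}(\beta_k)$ produces, after taking limits and using Lemma~\ref{lem:invariant laminations}(2) at the pseudo-Anosov endpoint $\lambda^{+}$, the equality $\mu = \lambda^{+}$. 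This contradicts $\lambda^{+} \notin \Cl(\mathcal{D}_{\Sigma_b})$. The main obstacle is the second paragraph: making sure a pseudo-Anosov $\psi$ exists whose relevant invariant lamination avoids $\Cl(\mathcal{D}_{\Sigma_b})$ — this is exactly where Lemma~\ref{lem:DSigma is nowhere dense in PML} and the density of $\mathcal{I}$ are doing the real work, and a little care is needed because one must realize a prescribed point of $\mathcal{I}$ as an invariant lamination of a pseudo-Anosov map, which is the content of how $\mathcal{I}$ was defined.
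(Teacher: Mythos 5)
Your proof is essentially the paper's: both pick a pseudo-Anosov $\psi$ whose invariant laminations $\lambda^{\pm}$ avoid $\Cl(\mathcal{D}_{\Sigma_b})$ (using Lemma~\ref{lem:DSigma is nowhere dense in PML} and the density part of Lemma~\ref{lem:invariant laminations}), then derive a contradiction from a uniform bound on $d_{\mathcal{C}(\Sigma_b)}(\mathcal{D}_{\Sigma_b}, \psi^n(\mathcal{D}_{\Sigma_b}))$ by extracting convergent subsequences in $\mathbb{P}\ML(\Sigma_b)$ and collapsing the chain of zero-intersection limits via Lemma~\ref{lem:invariant laminations}(2); you are merely more explicit about the source--sink dynamics that the paper leaves implicit when it writes $\lambda^N = \lambda^-$. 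One small caveat: your parenthetical suggestion that it would suffice to arrange only one of $\lambda^{+},\lambda^{-}$ outside $\Cl(\mathcal{D}_{\Sigma_b})$ is not justified as stated, since the argument needs the repelling fixed point outside $\Cl(\mathcal{D}_{\Sigma_b})$ to drive the far endpoint to $\lambda^{+}$ \emph{and} needs $\lambda^{+} \notin \Cl(\mathcal{D}_{\Sigma_b})$ for the final contradiction; but your primary route, which arranges both, matches the paper and is sound.
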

\begin{proof}
By Lemma~\ref{lem:DSigma is nowhere dense in PML}, 
$\mathcal{D}_{\Sigma_b}$ is nowhere dense in $\mathbb{P} \ML (\Sigma_b)$. 
Since $\mathcal{I}$ is dense in $\mathbb{P} \ML (\Sigma_b)$ by Lemma~\ref{lem:invariant laminations}, there exists a pseudo-Anosov automorphism 
$\psi: \Sigma_b \to \Sigma_b$ such that none of its invariant laminations 
$\lambda^+$, $\lambda^-$ is contained in $\Cl ( \mathcal{D}_{\Sigma_b} )$. 
We will show that $\psi$ is the required automorphism in the assertion. 

Suppose for a contradiction that there exists $N > 0$ such that 
$d_{\mathcal{C} (\Sigma_b)} ( \mathcal{D}_{\Sigma_b} , \psi^n (\mathcal{D}_{\Sigma_b})) \leq N$ 
for any $n > 0$.  
Thus, for each $n$ there exists a path $(\alpha_n^0, \ldots , \alpha_n^N)$ such that 
$\alpha_n^0 \in \mathcal{D}_{\Sigma_b}$ and $\alpha_n^N \in \psi^n (\mathcal{D}_{\Sigma_b})$. 
Recall that $\mathbb{P} \ML (\Sigma_b)$ is sequentially compact by Theorem~\ref{thm:PL structure on ML}. 
After passing to a subsequence if necessary, which we still write $(\alpha_n^j)$, 
we can assume that the sequence $(\alpha_n^j)$ converges to a point 
$\lambda^j$ in $\mathbb{P} \ML (\Sigma_b)$ (as $n \to \infty$) for 
all $j \in \{  0 , \ldots , N \}$. 
Note that $\lambda^0 \in \Cl (\mathcal{D}_{\Sigma_b})$ and $\lambda^N = \lambda^- $. 
Since the intersection number of any representatives of $\alpha_n^j$ and $\alpha_n^{j+1}$ 
in $\ML (\Sigma_b)$ is zero for all $n$ and $j$, 
that of any representatives of $\lambda^j$ and $\lambda^{j+1}$ in $\ML (\Sigma_b)$ is zero for 
all $j \in \{  0 , \ldots , N-1 \}$. 
Since $\lambda^N \in \mathcal{I}$, we have $\lambda^{N-1} = \lambda^N$ 
by Lemma~\ref{lem:invariant laminations} (2). 
Applying the same argument repeatedly, we finally get $\lambda^0 = \cdots = \lambda^N$. 
This is impossible because $\lambda^0 \in \Cl (\mathcal{D}_{\Sigma_b})$ and 
$\lambda^N = \lambda^- \not\in \Cl (\mathcal{D}_{\Sigma_b})$. 
\end{proof}

\begin{lemma}
\label{lem:distance between D(H) and f(D(H))}
Let $\psi$ be an automorphism of $\Sigma$ that preserve the binding $b$. 
If the distance $d_{\mathcal{C}(\Sigma_b)} ( \mathcal{D}_{\Sigma_b} , \psi (\mathcal{D}_{\Sigma_b}) )$ 
is greater than $6$, 
the distance $d_{\mathcal{C}(\Sigma)} ( \mathcal{D}(H) , \psi (\mathcal{D}(H))) $ 
is exactly $4$. 
\end{lemma}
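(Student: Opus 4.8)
The plan is to bound $d_{\mathcal{C}(\Sigma)}(\mathcal{D}(H),\psi(\mathcal{D}(H)))$ from above and below separately, using the binding $b$ as a bridge. For the upper bound, recall from Lemma~\ref{lem:distance of the binding} that $d_{\mathcal{C}(\Sigma)}(b,\mathcal{D}(H))=2$, and since $\psi$ preserves $b$ we also have $d_{\mathcal{C}(\Sigma)}(b,\psi(\mathcal{D}(H)))=d_{\mathcal{C}(\Sigma)}(\psi(b),\psi(\mathcal{D}(H)))=2$. The triangle inequality then immediately gives $d_{\mathcal{C}(\Sigma)}(\mathcal{D}(H),\psi(\mathcal{D}(H)))\le 4$.

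For the lower bound, the key tool is the subsurface projection $\pi_{\Sigma_b}$ together with Lemma~\ref{lem:subsurface projection of a geodesic segment}. Suppose for contradiction that $d_{\mathcal{C}(\Sigma)}(\mathcal{D}(H),\psi(\mathcal{D}(H)))\le 3$, and take a geodesic segment $(\alpha_0,\alpha_1,\dots,\alpha_n)$ in $\mathcal{C}(\Sigma)$ with $n\le 3$, $\alpha_0\in\mathcal{D}(H)$, $\alpha_n\in\psi(\mathcal{D}(H))$. I would first argue that every vertex $\alpha_j$ of this geodesic must intersect $\Sigma_b$, i.e. $\alpha_j\cap\Sigma_b\ne\emptyset$. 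The only essential curve on $\Sigma$ disjoint from $\Sigma_b$ is the core $b$ of the annulus $\Nbd(b)$ (up to isotopy); so if some $\alpha_j$ failed to meet $\Sigma_b$ it would be isotopic to $b$, forcing $d_{\mathcal{C}(\Sigma)}(b,\mathcal{D}(H))\le j\le 3$ — consistent with Lemma~\ref{lem:distance of the binding} only if $j\ge 2$, but more importantly it would split the geodesic and give, via the triangle inequality through $b$, that $d_{\mathcal{C}(\Sigma_b)}(\mathcal{D}_{\Sigma_b},\psi(\mathcal{D}_{\Sigma_b}))$ is small. (One has to handle this carefully: if $\alpha_j\simeq b$ then $d_{\mathcal{C}(\Sigma)}(\alpha_0,b)\le j$ and $d_{\mathcal{C}(\Sigma)}(b,\alpha_n)\le n-j$, and applying $\pi_{\Sigma_b}$ to each of those two sub-geodesics — whose interior vertices do meet $\Sigma_b$ after possibly re-choosing them — bounds the total.) Once I know all $\alpha_j$ meet $\Sigma_b$, Lemma~\ref{lem:subsurface projection of a geodesic segment} yields
\[
d_{\mathcal{C}(\Sigma_b)}(\pi_{\Sigma_b}(\alpha_0),\pi_{\Sigma_b}(\alpha_n))\le 2n\le 6.
\]
Since $\pi_{\Sigma_b}(\alpha_0)\subset\pi_{\Sigma_b}(\mathcal{D}(H))=\mathcal{D}_{\Sigma_b}$ and $\pi_{\Sigma_b}(\alpha_n)\subset\pi_{\Sigma_b}(\psi(\mathcal{D}(H)))$, and because $\psi$ preserves $b$ it commutes with $\pi_{\Sigma_b}$ in the sense that $\pi_{\Sigma_b}(\psi(\mathcal{D}(H)))=\psi|_{\Sigma_b}(\mathcal{D}_{\Sigma_b})=\psi(\mathcal{D}_{\Sigma_b})$. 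Hence $d_{\mathcal{C}(\Sigma_b)}(\mathcal{D}_{\Sigma_b},\psi(\mathcal{D}_{\Sigma_b}))\le 6$, contradicting the hypothesis that this distance exceeds $6$. Therefore $d_{\mathcal{C}(\Sigma)}(\mathcal{D}(H),\psi(\mathcal{D}(H)))\ge 4$, and combined with the upper bound it equals $4$.

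The main obstacle I expect is the bookkeeping in the case where a geodesic vertex $\alpha_j$ is disjoint from $\Sigma_b$: I need to confirm that the diameter bound $6$ still comes out when the geodesic passes through (a curve isotopic to) the binding, which requires splitting the path at $b$, re-selecting interior vertices of the two halves so that the hypotheses of Lemma~\ref{lem:subsurface projection of a geodesic segment} apply, and checking that $\pi_{\Sigma_b}(b)$ behaves correctly (it is empty or must be replaced by $\partial\Sigma_b$-parallel data). The cleanest route is probably to observe that no vertex of a \emph{minimal-length} geodesic between $\mathcal{D}(H)$ and $\psi(\mathcal{D}(H))$ can be isotopic to $b$ at all when the projection distance is large — if it were, then $d_{\mathcal{C}(\Sigma)}(\mathcal{D}(H),b)+d_{\mathcal{C}(\Sigma)}(b,\psi(\mathcal{D}(H)))\le n\le 3$ is false since each term is $2$ by Lemma~\ref{lem:distance of the binding}, giving $4\le 3$, absurd — so in fact the contradiction is reached immediately in that sub-case, and the genuine work is only Lemma~\ref{lem:subsurface projection of a geodesic segment} applied to a geodesic all of whose vertices meet $\Sigma_b$.
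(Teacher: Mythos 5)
Your argument is correct and takes essentially the same approach as the paper: the upper bound $\le 4$ follows from Lemma~\ref{lem:distance of the binding} and the triangle inequality through $b$; for the lower bound one supposes a geodesic $(\alpha_0,\dots,\alpha_n)$ of length $n\le 3$, rules out any $\alpha_j$ being $b$ exactly as in your closing observation (since $d_{\mathcal{C}(\Sigma)}(\mathcal{D}(H),b)=d_{\mathcal{C}(\Sigma)}(b,\psi(\mathcal{D}(H)))=2$ would force $4\le n\le 3$), so every vertex meets $\Sigma_b$ and Lemma~\ref{lem:subsurface projection of a geodesic segment} gives $d_{\mathcal{C}(\Sigma_b)}(\mathcal{D}_{\Sigma_b},\psi(\mathcal{D}_{\Sigma_b}))\le 2n\le 6$, a contradiction. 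The tentative middle paragraph about splitting the geodesic at $b$ and re-choosing interior vertices is unnecessary --- your own final remark is precisely the paper's clean treatment of that sub-case.
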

\begin{proof}
 By Lemma~\ref{lem:distance of the binding}, 
 the distance $d_{\mathcal{C}(\Sigma)} ( \mathcal{D}(H) , \psi (\mathcal{D}(H))) $ 
is at most $4$ for any $\psi$. 
Suppose that 
 $d_{\mathcal{C}(\Sigma_b)} ( \mathcal{D}_{\Sigma_b} , \psi (\mathcal{D}_{\Sigma_b}) ) > 6$. 
Suppose for a contradiction that 
the distance $d_{\mathcal{C}(\Sigma)} ( \mathcal{D}(H) , \psi (\mathcal{D}(H))) $ is less than $4$. 
Then there exists an integer $k \in \{ 0 , 1 , 2 , 3\}$ and a geodesic segment $(\alpha_0 , \ldots , \alpha_k )$ 
in $\mathcal{C} (\Sigma)$ with 
$\alpha_0 \in \mathcal{D} (H)$ and $\alpha_k \in \psi ( \mathcal{D} (H) )$. 
If there exists $j$ such that $\alpha_j = b$, we have 
either $d_{C (\Sigma)} ( \mathcal{D} (H) , b ) < 2$ or 
$d_{\mathcal{C}(\Sigma)} ( b , \psi ( \mathcal{D} (H) ) ) < 2$. 
Since $b$ is a binding of the handlebody whose disk sets corresponds to 
$\psi ( \mathcal{D} (H) )$, this is impossible by Lemma~\ref{lem:distance of the binding}. 
Suppose that $\alpha_j \neq b$ for all $j$. 
Then by Lemma~\ref{lem:subsurface projection of a geodesic segment}, 
we have $d_{\mathcal{C}(\Sigma_b)} ( \mathcal{D}_{\Sigma_b} , \psi (\mathcal{D}_{\Sigma_b}) ) \leq  6$, 
which is a contradiction. 
\end{proof}
We remark that in the proof of Lemma~\ref{lem:distance between D(H) and f(D(H))} we have used the assumption that $F$ has a single 
boundary component. 
Indeed, Lemma~\ref{lem:distance of the binding}, which is used to get a 
contradiction in the argument, is valid only when 
the binding $b$ on $\partial H$ is a single simple closed curve. 
In the case where $b$ has more than one components, we have 
$d_{\mathcal{C} (\Sigma)} (b , \mathcal{D} (H)) = 1$, which cannot lead to a contradiction.

\begin{proof}[Proof of Theorem~$\ref{thm:the Goeritz groups of distance-4 Heegaard splittings}$]
The first assertion is a direct consequence of 
Lemma~\ref{lem:distance between D(H) and f(D(H))}. 

By identifying $\Sigma_b$ with $S \times \{ 1/2 \}$, we get a natural injective homomorphism 
$\eta : G (S, \iota_0, \iota_1) \to \MCG_+ (M, H_1)$. 
We will show the surjectivity of $\eta$. 
Suppose that there exists an element $\varphi \in \MCG_+ (M, H_1)$ 
such that $\varphi (b) \neq b$. 
Set $b' := \varphi (b)$. 
Since $b'$ is also a binding of a twisted book decomposition of $M$, 
we have $d_{\mathcal{C} (\Sigma)} ( b' , \mathcal{D} (H_j)) = 2$ for $j \in \{ 1 , 2 \}$ 
by Lemma~\ref{lem:distance of the binding}. 
Set $\mathcal{D}_{\Sigma_b}^j := \pi_{\Sigma_b} (\mathcal{D} (H_j))$. 
By Lemma~\ref{lem:subsurface projection of a geodesic segment}, 
we have $d_{\mathcal{C} (\Sigma_b)} ( \pi_{\Sigma_b} (b') , \mathcal{D}_{\Sigma_b}^j) \leq 4$.
This together with the fact that 
the diameter of $\pi_{\Sigma_b} (b')$ is at most $2$ implies that
$d_{\mathcal{C} (\Sigma_b)} (\mathcal{D}_{\Sigma_b}^1 , \mathcal{D}_{\Sigma_b}^2)$ 
is at most $10$. 
This contradicts the assumption on $\varphi$. 
In Consequence, any element of $\MCG_+ (M, H_+)$ preserves the binding $b$. 

Let $\varphi \in \MCG_+ (M, H_1)$. 
Let $q : S \times I \to M$ be the quotient map. 
Set $S_t := q (S \times \{ t \} )$ for $t \in [0, 1]$. 
Since the $I$-bundle structure of $H$ with the binding $b$ is unique, 
$\varphi$ preserves each $S_t$. 
In particular, $\varphi $ restricts to an orientation preserving 
automorphism of $S_{1/2} = \Sigma$. 
Thus $\varphi$ is contained in the image of $\eta$. 
\end{proof}

\section*{Acknowledgments} 
The authors would like to thank the anonymous referee
for his or her valuable comments and suggestions 
that helped them to improve the exposition. 
The second author is supported by JSPS KAKENHI Grant
 Numbers JP15H03620, JP17K05254, JP17H06463, and JST CREST Grant Number JPMJCR17J4.

\end{document}